\documentclass[11pt]{amsart}

\usepackage{epigamath}


\usepackage[english]{babel}


\numberwithin{equation}{section}


\usepackage[shortlabels]{enumitem}
\setlist[enumerate,1]{label={\rm(\arabic*)}, ref={\rm\arabic*}}

\usepackage{amssymb, amsmath, amsthm}
\usepackage{hyperref}
\usepackage{verbatim}
\usepackage{calrsfs}
\usepackage{todonotes}
\usepackage{tikz}
\usetikzlibrary{matrix,arrows,calc,cd}
\usepackage{package}
%


\newcommand{\lra}{\longrightarrow}
\newcommand{\supth}[1]{\ensuremath{#1^{\mathrm{th}}}}


\EpigaVolumeYear{7}{2023} \EpigaArticleNr{18} \ReceivedOn{September 22, 2022}
\InFinalFormOn{March 6, 2023}
\AcceptedOn{June 6, 2023}

\title{Torus actions on quotients of affine spaces}
\titlemark{Torus actions on quotients of affine spaces}

\author{Ana-Maria Brecan}
\email{ana-maria.brecan@uni-bayreuth.de}
\address{University of Bayreuth, Faculty of Mathematics, Physics, and Computer Science, Universit\"ats\-stra{\ss}e 30, 95447 Bayreuth, Germany}
\author{Hans Franzen}
\email{hans.franzen@math.upb.de}
\address{Paderborn University, Institute of Mathematics, 
Warburger Stra{\ss}e 100, 
33098 Paderborn, Germany}

\authormark{A.-M.~Brecan and H.~Franzen} 

\AbstractInEnglish{We study the locus of fixed points of a torus action on a GIT quotient of a complex vector space by a reductive complex algebraic group which acts linearly. We show that, under the assumption that $G$ acts freely on the stable locus, the components of the fixed point locus are again GIT quotients of linear subspaces by Levi subgroups.}

\MSCclass{14L30}

\KeyWords{Geometric quotients, torus actions, torus fixed points, destabilizing one-parameter subgroups}


\acknowledgement{While this research was conducted, H.F.\ was supported by the DFG SFB TR 191 ``Symplektische Strukturen in Geometrie, Algebra und Dynamik''.}

\begin{document}


\maketitle

\begin{prelims}

\DisplayAbstractInEnglish

\bigskip

\DisplayKeyWords

\medskip

\DisplayMSCclass

\end{prelims}


\newpage

\setcounter{tocdepth}{1}

\tableofcontents


\section{Introduction}

	In this paper, we are concerned with actions of tori on geometric quotients of linear actions of reductive complex algebraic groups on vector spaces. Such geometric quotients appear very often. One prominent class of examples are moduli spaces of quivers without relations; see \textit{e.g.}\ \cite{Reineke:08} for an overview of this theory. Geometric quotients of vector spaces arise in other contexts as well. For example, every toric variety whose fan satisfies some mild hypotheses can be realized as such a quotient; see \cite[Theorem~2.1]{Cox:97}. Other instances where GIT quotients of vector spaces are investigated are \cite{ES:89, Halic:10, Hoskins:14}.
	
	We study actions of a torus $\TT$ on a GIT quotient of the form $V^{\st}(G,\theta)/G$, where $G$ is a reductive complex algebraic group, $V$ is a finite-dimensional representation of $G$, and $\theta$ is a character of $G$. We assume that this torus action comes from a linear $\TT$-action on $V$ that commutes with the action of $G$. Our main objective in this paper is to determine the locus of fixed points of this torus action.
	
	In the case of quiver moduli, this question has been studied by Weist in \cite{Weist:13}. More precisely, he considers torus actions on moduli spaces of stable quiver representations which are given by scaling of the maps along the arrows of the quiver. He shows that the fixed point locus of such a torus action decomposes into irreducible components, each of which is isomorphic to a moduli space of stable representations of a covering quiver.
	
	We generalize this result to GIT quotients of the form $V^{\st}(G,\theta)/G$. We need to make the assumption that the action of $G$ on the stable locus is free. This is automatically satisfied in the context of quiver moduli spaces. 
	
	Our main result, Theorem~\ref{t:main}, asserts that, under the assumption from the previous paragraph, the fixed point locus of the action of a torus on $V^{\st}(G,\theta)/G$ decomposes into irreducible components, each of which is again a stable GIT quotient of a linear subspace of $V$ by a Levi subgroup of $G$. More precisely, we fix a maximal torus $T$ in $G$ and let $W$ be its Weyl group. For every morphism $\rho\colon \TT \to T$ of algebraic groups, we set $V_\rho = \{v \in V \mid t.v = \rho(t)v \text{ (all $t \in \TT$)}\}$ and define $G_\rho$ to be the centralizer in $G$ of $\im \rho$.
	
	\begin{thm*}[{Theorem~\ref{t:main}}]
		The $\TT\!$-fixed point locus $(V^{\st}(G,\theta)/G)^{\TT}$ decomposes into connected components $\smash{\bigsqcup_{\rho}} \smash{F_\rho}$ indexed by a full set of representatives of morphisms of tori $\rho\colon \TT \to T$ up to conjugation with $W$. The component $F_\rho$ equals the image of\, $V_\rho \cap V^{\st}(G,\theta)$ in $V^{\st}(G,\theta)/G$, it is irreducible, and it is isomorphic to 
		\[
			V_\rho^{\st}(G_\rho,\theta)/G_\rho.
		\]
	\end{thm*}
	
	The index set of this disjoint union is infinite, but there are only finitely many $\rho$ for which $F_\rho$ is non-empty. This is discussed in Section~\ref{s:finiteness}.
	
	There are two main ingredients of the proof of the above main result. The first is concerned with the description of the intersection of the (semi\nobreakdash-)\-stable locus with $V_\rho$.
        
	\begin{thm*}[Theorem~\ref{t:stab_subgroup}]
		Let $\rho\colon \TT \to G$ be a morphism of algebraic groups. Then
		\begin{enumerate}
			\item $V_\rho \cap V^{\sst}(G,\theta) = V_\rho^{\sst}(G_\rho,\theta)$;
			\item $V_\rho \cap V^{\st}(G,\theta) = V_\rho^{\st}(G_\rho,\theta)$.
		\end{enumerate}
	\end{thm*}
	
	The proof of the first statement uses Kempf's theory of optimal destabilizing one-parameter subgroups \cite{Kempf:78}. For the second statement, we need to define, associated with a one-parameter subgroup $\lambda$ of $G$ and a vector $v \in V$, a Zariski-closed subset $Y_{[\lambda]}(v)$ of $G/P_\lambda$ and find a $\TT\!$-fixed point of it. The definition of $Y_{[\lambda]}(v)$ can be found in Section~\ref{s:Opt}.
	
	The second major auxiliary result is Theorem~\ref{t:closed_immersion}. 
	
	\begin{thm*}[Theorem~\ref{t:closed_immersion}]
		The induced morphism $i_\rho\colon  V_\rho^{\st}(G_\rho,\theta)/G_\rho \to V^{\st}(G,\theta)/G$ is a closed immersion.
	\end{thm*}
	
	To show this, it is enough to prove that $i_\rho$ is proper, injective (on $\C$-valued points) and that it induces an injective map on tangent spaces. We show all three assertions in Section~\ref{s:embedding}. Properness is shown using a result of Luna.
	
	Throughout the paper, we work over the field of complex numbers. All results hold over an algebraically closed field of characteristic zero, though. In positive characteristic, Lemma~\ref{l:morphism}, which is used to assign to every component of the fixed point locus a morphism  $\rho\colon  \mathcal{T} \to G$ of algebraic groups, fails; see Example~\ref{e:positive_char}. Therefore, we do not know how to describe the fixed point locus over a field of positive characteristic.
	
	The paper is organized as follows. In Section~\ref{s:setup}, we fix the setup. Section~\ref{s:Opt} recalls Kempf's theory of optimal destabilizing one-parameter subgroups; we define the sets $Y_{[\lambda]}(v)$. In Section~\ref{s:lifts}, we show that lifts of fixed points induce a morphism $\rho\colon \TT \to G$, and we prove Theorem~\ref{t:stab_subgroup}. Section~\ref{s:embedding} is devoted to the proof of Theorem~\ref{t:closed_immersion}. In Section~\ref{s:fixedPointLocus}, the main result is stated and proved. Section~\ref{s:finiteness} is concerned with finiteness conditions on the index set of the irreducible components of the fixed point locus.
	In Sections~\ref{s:quiver_moduli} and~\ref{s:toric}, we apply our theory to some classes of quotients. We discuss quiver moduli and show how to derive Weist's result \cite[Theorem~3.8]{Weist:13} from ours in Section~\ref{s:quiver_moduli}. In Section~\ref{s:toric}, we discuss the case where $G$ is a torus and compare our description of the fixed point locus with the characterization in terms of the toric fan.
	
\subsection*{Acknowledgments}
We are grateful to Michel Brion, Alan Huckleberry, and Markus Reineke for interesting conversations and insightful remarks on this subject. We would like to thank Sergey Mozgovoy, who suggested an argument that made it possible to get rid of an extra assumption. We thank the referee for various helpful comments that improved the exposition and for interesting questions.

	\section{Setup} \label{s:setup}
	
	Let us introduce the setup with which we are going to work throughout the article. We are interested in GIT quotients of complex vector spaces by actions of reductive complex algebraic groups. As references for actions of algebraic groups and (geometric) invariant theory, the reader may consult, for instance, \cite{Brion:10, Dolgachev:03, GIT:94}.
	
	We work over the complex numbers. Let $G$ be a connected reductive algebraic group.	
	Let $V$ be a finite-dimensional representation of $G$. For the action of an element $g \in G$ on a vector $v \in V$, we write $gv$. We consider $V$ as an affine space, and we are interested in GIT quotients for this action. Let $X^*(G)$ be the group of characters of $G$. 
	For $\chi \in X^*(G)$, we define
	\[
		\C[V]^{G,\chi} = \{ f \in \C[V] \mid f(gv) = \chi(g)f(v) \text{ all $g \in G$, $v \in V$}\}
	\] 
	and call $f \in \C[V]^{G,\chi}$ a $\chi$-semi-invariant function.
	Fix $\theta \in X^*(G)$.
	
	\begin{defn}
		Let $v \in V$.
		\begin{enumerate}
			\item We call $v$ a $\theta$-\emph{semi-stable} point if there exist $n>0$ and $f \in \C[V]^{G,n\theta}$ such that $f(v) \neq 0$. Let $V^{\sst}(G,\theta) \sub V$ be the subset of $\theta$-semi-stable points.
			\item The point $v$ is called $\theta$-\emph{stable} if $v$ is $\theta$-semi-stable and has finite stabilizer and $G\cdot v$ is closed in $V^{\sst}(G,\theta)$. Let $V^{\st}(G,\theta)$ be the set of $\theta$-stable points.
			\item We call $v$ a $\theta$-\emph{unstable} point if $v$ is not $\theta$-semi-stable. Let $V^{\unst}(G,\theta)$ be the set of $\theta$-unstable points.
		\end{enumerate}
	\end{defn}
	
	\begin{rem}
		For a character $\chi \in X^*(G)$, we define the $G$-linearized line bundle $L(\chi)$ as the trivial line bundle together with the $G$-action on the total space $G \times V \times \C \to V \times \C$ given by $g\cdot (v,z) = (gv,\chi(g)z)$. Then
		\[
			H^0(V,L(\chi))^G = \C[V]^{G,\chi}; 
		\]
		\textit{i.e.}\ $G$-invariant sections of $L(\chi)$ are the same as $\chi$-semi-invariant functions.  This shows that a point $v \in V$ is $\theta$-semi-stable if and only if it is semi-stable with respect to $L(\theta)$ in the sense of Mumford (see \cite[Definition~1.7]{GIT:94}). Our notion of $\theta$-stability agrees with proper stability with respect to $L(\theta)$, as defined in \cite[Definition~1.8]{GIT:94}.
	\end{rem}

	We define the graded ring $\C[V]_\theta^G := \bigoplus_{n \geq 0} \C[V]^{G,n\theta}$. We obtain a good categorical quotient
	\[
		\pi\colon  V^{\sst}(G,\theta) \lra V^{\sst}(G,\theta)/\!\!/G := V/\!\!/_{L(\theta)}G = \Proj\left(\C[V]_\theta^G\right).
	\]
	We also define $V^{\st}(G,\theta)/G := \pi(V^{\st}(G,\theta))$. The restriction of $\pi$ gives a geometric quotient.
	
	Let us recall the Hilbert--Mumford criterion for (semi\nobreakdash-)stability from \cite{GIT:94}. We are using the form stated in King's paper \cite[Proposition~2.5]{King:94}.
	
	\begin{thm}[Hilbert--Mumford criterion]
		Let $v \in V$.
		\begin{enumerate}
			\item The point $v$ is $\theta$-semi-stable if and only if $\langle \theta,\eta \rangle \geq 0$ for every one-parameter subgroup $\eta\colon  \C^\times \to G$ for which $\lim_{z \to 0} \eta(z)v$ exists.
			\item The point $v$ is $\theta$-stable if and only if $\langle \theta,\eta \rangle > 0$ for every non-trivial one-parameter subgroup $\eta\colon  \C^\times \to G$ for which $\lim_{z \to 0} \eta(z)v$ exists.
		\end{enumerate}
	\end{thm}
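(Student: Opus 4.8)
The plan is to deduce both assertions from the Hilbert--Mumford criterion for \emph{affine} GIT by means of the standard cone construction. I would let $\C_{-\theta}$ denote the one-dimensional representation of $G$ on which $g$ acts by multiplication by $\theta(g)^{-1}$, fix a nonzero vector $e_0\in\C_{-\theta}$, and set $\widehat V:=V\oplus\C_{-\theta}$, regarded as an affine $G$-variety. Sending $f\in\C[V]^{G,n\theta}$ to the function $(v,ce_0)\mapsto f(v)\,c^{n}$ identifies $\C[\widehat V]^{G}$ with the graded ring $\C[V]^{G}_{\theta}=\bigoplus_{n\ge 0}\C[V]^{G,n\theta}$, and under this identification the $G$-invariant functions vanishing on $V\times\{0\}$ are precisely those of strictly positive degree. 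The one computation used throughout is that, for a one-parameter subgroup $\eta$ of $G$,
\[
	\eta(z)\cdot(v,e_0)\;=\;\bigl(\eta(z)v,\ z^{-\langle\theta,\eta\rangle}e_0\bigr),
\]
so $\lim_{z\to0}\eta(z)(v,e_0)$ exists if and only if $\lim_{z\to0}\eta(z)v$ exists \emph{and} $\langle\theta,\eta\rangle\le 0$, and this limit lies in $V\times\{0\}$ precisely when moreover $\langle\theta,\eta\rangle<0$.

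For statement~(1), one implication is immediate: if $f\in\C[V]^{G,n\theta}$ with $n>0$ and $f(v)\ne 0$, then for any $\eta$ with $v_0:=\lim_{z\to0}\eta(z)v$ existing one has $f(v_0)=\lim_{z\to0}z^{n\langle\theta,\eta\rangle}f(v)$, and, the left-hand side being a finite number and $f(v)\ne 0$, necessarily $\langle\theta,\eta\rangle\ge 0$. For the converse I would first observe that $v$ is $\theta$-semi-stable if and only if $\overline{G\cdot(v,e_0)}\cap(V\times\{0\})=\emptyset$: a semi-invariant $f\in\C[V]^{G,n\theta}$ with $n>0$ and $f(v)\ne 0$ yields a $G$-invariant on $\widehat V$ that is nonzero along $\overline{G\cdot(v,e_0)}$ and vanishes on $V\times\{0\}$; conversely, if the two closed $G$-invariant subsets $\overline{G\cdot(v,e_0)}$ and $V\times\{0\}$ of $\widehat V$ are disjoint, reductivity of $G$ furnishes a separating $G$-invariant, which necessarily lies in $\bigoplus_{n>0}\C[V]^{G,n\theta}$ and is nonzero at $(v,e_0)$. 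Granting this, if $v$ is not $\theta$-semi-stable then $\overline{G\cdot(v,e_0)}$ meets the closed $G$-invariant set $V\times\{0\}$, so by the affine Hilbert--Mumford criterion (\cite{GIT:94}, \cite{Kempf:78}) there is a one-parameter subgroup $\eta$ with $\lim_{z\to0}\eta(z)(v,e_0)\in V\times\{0\}$; by the displayed formula this says that $\lim_{z\to0}\eta(z)v$ exists while $\langle\theta,\eta\rangle<0$, which is exactly the contrapositive of the assertion.

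For statement~(2), note first that, in view of~(1), the condition on $v$ is equivalent to demanding that $v$ be $\theta$-semi-stable and that no \emph{non-trivial} one-parameter subgroup $\eta$ with $\langle\theta,\eta\rangle=0$ have $\lim_{z\to0}\eta(z)v$ existing. Suppose $v$ is $\theta$-stable and $\eta$ satisfies $\langle\theta,\eta\rangle=0$ with $v_0:=\lim_{z\to0}\eta(z)v$ existing. The exponent-zero instance of the computation above gives $f(v_0)=f(v)$ for every $\theta$-semi-invariant $f$, so $v_0$ is semi-stable and lies in the closure of $G\cdot v$ in $V$; since $G\cdot v$ is closed in $V^{\sst}(G,\theta)$ it follows that $v_0\in G\cdot v$. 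Then $z\mapsto\eta(z)v$ is a morphism $\mathbb{A}^{1}\to G\cdot v\cong G/\operatorname{Stab}_{G}(v)$, and since $\operatorname{Stab}_{G}(v)$ is finite the map $G\to G\cdot v$ is a finite \'etale cover, whose pullback along $\mathbb{A}^{1}\to G\cdot v$ is trivial; a section lifts our morphism to $s\colon\mathbb{A}^{1}\to G$ with $s(z)v=\eta(z)v$, and then $z\mapsto s(z)^{-1}\eta(z)$ maps $\C^{\times}$ into the finite group $\operatorname{Stab}_{G}(v)$, hence is constant, so $\eta$ extends to a morphism $\mathbb{A}^{1}\to G$ and is therefore trivial.

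Conversely, suppose $v$ is $\theta$-semi-stable and admits no non-trivial $\theta$-neutral one-parameter subgroup with a limit. If $G\cdot(v,e_0)$ were not closed in $\widehat V$, its boundary would be a non-empty closed $G$-invariant subset of $\overline{G\cdot(v,e_0)}$, and the affine Hilbert--Mumford criterion would produce an $\eta$ with $\lim_{z\to0}\eta(z)(v,e_0)$ existing but lying outside $G\cdot(v,e_0)$; since $v$ is semi-stable this limit avoids $V\times\{0\}$, forcing $\langle\theta,\eta\rangle=0$ with $\eta$ non-trivial --- a contradiction. Hence $G\cdot(v,e_0)$ is closed in the affine variety $\widehat V$, therefore affine, so by Matsushima's criterion the group $\operatorname{Stab}_{G}(v,e_0)=\operatorname{Stab}_{G}(v)\cap\ker\theta$ is reductive. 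A positive-dimensional subgroup of $\operatorname{Stab}_{G}(v)$ would contain either a copy of $\C^{\times}$ --- whose one-parameter subgroup $\eta$ fixes $v$, so that~(1), applied to $\eta$ and to $\eta^{-1}$, forces $\langle\theta,\eta\rangle=0$, which is excluded --- or a one-dimensional unipotent subgroup, which lies in $\ker\theta$ and hence in the reductive group $\operatorname{Stab}_{G}(v,e_0)$, which is impossible; so $\operatorname{Stab}_{G}(v)$ is finite. Finally, a short convergent-subsequence argument (taking $f\in\C[V]^{G,n\theta}$ with $n>0$ nonzero at a point $v'$ in the closure of $G\cdot v$ and a sequence $g_i$ with $g_iv\to v'$, so that $\theta(g_i)^{n}$ has a nonzero limit) shows that closedness of $G\cdot(v,e_0)$ in $\widehat V$ forces $G\cdot v$ to be closed in $V^{\sst}(G,\theta)$; hence $v$ is $\theta$-stable. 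I expect the genuine obstacle to be the affine Hilbert--Mumford criterion itself --- the classical theorem that if $\overline{G\cdot x}$ meets a closed $G$-invariant subset $Z$ of an affine $G$-variety then some one-parameter subgroup $\eta$ has $\lim_{z\to0}\eta(z)x\in Z$ --- since everything else, including the bookkeeping that transfers closedness of orbits in $\widehat V$ to statements about $V^{\sst}(G,\theta)$, is formal once the cone is in place.
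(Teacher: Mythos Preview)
The paper does not prove this statement: it is stated as a background result, with a citation to \cite{GIT:94} and to King's paper \cite[Proposition~2.5]{King:94}, and no proof is given. So there is no ``paper's own proof'' to compare against; your write-up is a genuine addition. The cone construction you use (passing to $\widehat V=V\oplus\C_{-\theta}$ and invoking the affine Hilbert--Mumford criterion) is exactly the route taken in King's proof of \cite[Proposition~2.5]{King:94}, so in that sense your approach matches the source the paper defers to.

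Your argument is essentially correct, but one step in the proof of~(2) is mis-stated. You write that a one-dimensional unipotent subgroup ``lies in $\ker\theta$ and hence in the reductive group $\operatorname{Stab}_G(v,e_0)$, which is impossible.'' Taken at face value this is false: reductive groups can and do contain copies of $\mathbb{G}_a$ (think of $\mathrm{SL}_2$). What makes the argument go through is that you have \emph{already} excluded any $\C^\times$ inside $\operatorname{Stab}_G(v)$ in the first branch of your dichotomy. Consequently $\operatorname{Stab}_G(v,e_0)\subset\operatorname{Stab}_G(v)$ also contains no nontrivial torus; a reductive group whose identity component has trivial maximal torus is finite, and a finite group cannot contain $\mathbb{G}_a$. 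Equivalently, once the torus case is excluded, $\operatorname{Stab}_G(v)^0$ is unipotent, hence contained in $\ker\theta$, hence equal to $\operatorname{Stab}_G(v,e_0)^0$, which is connected reductive and unipotent, hence trivial. Either way you should make the dependence on the first case explicit rather than asserting a false standalone impossibility. With that correction (and perhaps a slightly more algebraic phrasing of the ``convergent-subsequence'' step at the end, e.g.\ via the valuative criterion instead of analytic limits), the proof is fine.
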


	We will assume the following to hold throughout the text.
		
	\begin{ass} \label{a:general_assumption}
		We suppose that $G$ acts freely on $V^{\st}(G,\theta)$.
	\end{ass}
	
	Assumption~\ref{a:general_assumption} guarantees that $\pi\colon  V^{\st}(G,\theta) \to V^{\st}(G,\theta)/G$ is a principal $G$-bundle in the {\'e}tale topology. This follows from Luna's slice theorem \cite[Section~III.1]{Luna:73}. \smash{\'Etale} local triviality implies smoothness of $V^{\st}(G,\theta)/G$.
	
	Consider a torus $\TT := (\C^\times)^r$. Suppose that $\TT$ acts linearly on $V$ in such a way that the actions of $G$ and $\TT$ commute.
	
	\begin{lem}
		The subsets $V^{\st}(G,\theta)$ and $V^{\sst}(G,\theta)$ are $\TT\!$-invariant.
	\end{lem}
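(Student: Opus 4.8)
The plan is to show that $\TT$-invariance of both loci follows from the definitions together with the hypothesis that the $G$-action and the $\TT$-action on $V$ commute. First I would treat $V^{\sst}(G,\theta)$. Let $v\in V^{\sst}(G,\theta)$; by definition there exist $n>0$ and $f\in\C[V]^{G,n\theta}$ with $f(v)\neq 0$. Fix $t\in\TT$ and define $f_t\in\C[V]$ by $f_t(w) = f(t^{-1}w)$. Because the $\TT$-action is linear, $f_t$ is again a polynomial function of the same degree; because the $\TT$-action commutes with the $G$-action, for all $g\in G$ and $w\in V$ we have $f_t(gw) = f(t^{-1}gw) = f(g\,t^{-1}w) = n\theta(g)\,f(t^{-1}w) = n\theta(g)\,f_t(w)$, so $f_t\in\C[V]^{G,n\theta}$. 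Since $f_t(tv) = f(v)\neq 0$, we conclude $tv\in V^{\sst}(G,\theta)$. As $t$ was arbitrary, $V^{\sst}(G,\theta)$ is $\TT$-invariant.

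For $V^{\st}(G,\theta)$, I would use the Hilbert--Mumford criterion stated above (one could equally argue directly with closed orbits and stabilizers, but the numerical criterion is cleanest here). Let $v\in V^{\st}(G,\theta)$ and $t\in\TT$. We already know $tv\in V^{\sst}(G,\theta)$. Let $\eta\colon\C^\times\to G$ be a non-trivial one-parameter subgroup such that $\lim_{z\to 0}\eta(z)(tv)$ exists. Since the two actions commute, $\eta(z)(tv) = t\bigl(\eta(z)v\bigr)$ for all $z$, and because translation by the fixed element $t$ is a homeomorphism of $V$, the limit $\lim_{z\to 0}t\bigl(\eta(z)v\bigr)$ exists if and only if $\lim_{z\to 0}\eta(z)v$ exists. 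Hence $\lim_{z\to 0}\eta(z)v$ exists, and stability of $v$ gives $\langle\theta,\eta\rangle>0$. As this holds for every such $\eta$, the Hilbert--Mumford criterion shows $tv\in V^{\st}(G,\theta)$, and therefore $V^{\st}(G,\theta)$ is $\TT$-invariant.

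There is no real obstacle here; the only point that requires a moment's care is verifying that the translated semi-invariant $f_t$ is still $G$-semi-invariant, which is exactly where the commutativity hypothesis on the two actions is used, and that translation by $t$ does not affect existence of limits along one-parameter subgroups of $G$. Both are immediate once written out. (Alternatively, one may observe that $\TT$ normalizes $G$ trivially, so each $t\in\TT$ induces an automorphism of the pair $(V,G\text{-action})$ fixing $\theta$, hence permutes the semi-invariants and preserves all GIT notions; this gives a one-line proof but hides the elementary verification above.)
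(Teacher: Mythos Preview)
Your proof is correct. For the stable locus your argument is essentially identical to the paper's: both use the Hilbert--Mumford criterion, commute $\eta(z)$ past $t$, and conclude that the limit along $\eta$ exists for $v$ whenever it exists for $tv$.

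The one genuine difference is in the semi-stable case. The paper simply remarks that the same Hilbert--Mumford argument works (with the weak inequality $\langle\theta,\eta\rangle\geq 0$), whereas you go back to the definition and translate the semi-invariant $f$ by $t$ to produce $f_t\in\C[V]^{G,n\theta}$ with $f_t(tv)\neq 0$. Your route is slightly more elementary in that it avoids invoking the numerical criterion at all for this half, and it makes explicit where commutativity of the two actions enters (namely in checking that $f_t$ is again $G$-semi-invariant). The paper's route is more uniform, treating both cases by the same mechanism. Either is perfectly adequate here. One notational quibble: writing $n\theta(g)$ for the value of the character $n\theta$ at $g$ could be misread as $n\cdot\theta(g)$; writing $(n\theta)(g)$ or $\theta(g)^n$ would be clearer.
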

	
	\begin{proof}
		Both assertions are easily proved using the Hilbert--Mumford criterion. We will only show the $\TT\!$-invariance of the stable locus. The invariance of the semi-stable locus is entirely analogous.
		
		Let $v \in V^{\st}(G,\theta)$ and $t \in \TT$. Suppose that $\eta\colon  \C^\times \to G$ is a one-parameter subgroup for which $\lim_{z \to 0} \eta(z)t.v$ exists. As the actions of $\TT$ and $G$ commute, we get $\eta(z)t.v = t.\eta(z)v$, and hence
		\[
			\lim_{z \to 0}\eta(z)v = t^{-1}.\lim_{z\to 0}\eta(z)t.v
		\]
		exists. The stability of $v$ implies $\langle \theta,\eta \rangle > 0$.
	\end{proof}
	
	This implies that the $\TT\!$-action descends to an action on the quotient $V^{\sst}(G,\theta)/\!\!/G$ and to $V^{\st}(G,\theta)/G$. We are going to describe the fixed point locus of the latter action.

	\begin{rem} \label{r:normal_form}
		As we work in characteristic zero, the group $G$ is linearly reductive, so the finite-dimensional representation $V$ decomposes into irreducible representations, say $V \cong V_1^{n_1} \oplus \ldots \oplus V_l^{n_l}$, where $V_1,\ldots,V_l$ are pairwise non-isomorphic irreducible representations of $G$. An application of Schur's lemma shows that $t \in \TT$ leaves the summand $V_i^{n_i}$ invariant. Moreover, as $\C$ is algebraically closed, Schur's lemma also tells us that $t$ acts on $V_i^{n_i} \cong V_i \otimes \C^{n_i}$ as an element of $\GL_{n_i}(\C)$. The thus obtained $\TT$-action on $\C^{n_i}$ can be diagonalized: we find a basis $e_{i,1},\ldots,e_{i,n_i}$ of $\C^{n_i}$ such that $\TT$ operates on $e_{i,a}$ with weight $w_{i,a} \in X^*(\TT) \cong \Z^r$. This means that every vector $v \in V$ can be decomposed in a unique way as $v = \smash{\sum_{i=1}^l} \sum_{a=1}^{n_i} v_{i,a} \otimes e_{i,a}$ with $v_{i,a} \in V_i$ and a torus element $t \in \TT$ acts on $v$ by
		\[
			t.v = \sum_{i=1}^l \sum_{a=1}^{n_i} w_{i,a}(t)v_{i,a} \otimes e_{i,a}.
		\]		
		The irreducible representations of $G$ are completely classified. Choose a maximal torus $T$ of $G$. Let $D \sub X^*(T)$ be the set of dominant weights. For every $\lambda \in D$, there exists a unique representation $M_G(\lambda)$ of $G$ whose highest weight is $\lambda$. For a weight $\mu \in X^*(T)$, the dimension $d_{\lambda,\mu}$ of the weight space $M_G(\lambda)_\mu$ is combinatorially determined as the number of ways to write $\lambda-\mu$ as a sum of positive roots.
	\end{rem}

	Let $T$ be a maximal torus of $G$. Let $\mu \in X^*(T)$. By $V_\mu(T)$ we denote the weight space of weight $\mu$. In the same vein, for $w \in X^*(\TT)$, we let $V_w(\TT)$ be the weight space of weight $w$ with respect to the $\TT\!$-action. As the actions commute, we obtain an action of $T \times \TT$. The weight space $V_{(\mu,w)}(T \times \TT)$ agrees with $V_\mu(T) \cap V_w(\TT)$.
	Let $N_T(V) = \{ \mu \in X^*(T) \mid V_\mu(T) \neq 0 \}$ be the set of weights of the $T$-action on $V$. Define $N_\TT(V)$ and $N_{T \times \TT}(V)$ in just the same way.

	\section{Optimal destabilizing one-parameter subgroups} \label{s:Opt}
	
	The theory of optimal destabilizing one-parameter subgroups is developed by Kempf in \cite{Kempf:78}; see also \cite{Kempf:18} for a transcript of a preliminary version of the article which contains simpler, more accessible versions of the main results. Kempf's theory is used by Hesselink \cite{Hesselink:78}, Kirwan \cite{Kirwan:84}, and Ness \cite{Ness:84} to give a stratification of the unstable locus. A treatment of Kempf's theory for instability with respect to a character can be found in Hoskin's paper \cite{Hoskins:14}. We are going to recall the basics of this theory for the convenience of the reader.

	Assume that $v$ is unstable with respect to the $G$-action and the stability condition $\theta$. Then, by the Hilbert--Mumford criterion, there exists a one-parameter subgroup $\eta$ of $G$ such that $\lim_{z \to 0} \eta(z)v$ exists and $\langle \theta, \eta \rangle < 0$. 
	
	Kempf shows in \cite{Kempf:78} how to find a one-parameter subgroup which is ``most responsible'' for the instability of $v$. To this end, we again fix a maximal torus $T \sub G$ and choose an inner product $(\blank,\blank)$ on the vector space $X_*(T)_\R$ which is $W$-invariant (\textit{i.e.}\ $(w\lambda w^{-1},w\mu w^{-1}) = (\lambda,\mu)$ for all $\lambda,\mu \in X_*(T)_\R$) and for which $(\lambda,\lambda) \in \Z$ provided that $\lambda \in X_*(T)$. We thus obtain a $W$-invariant norm $\|\blank\|$ on $X_*(T)_\R$ whose square takes integral values on $X_*(T)$.
	For any $\eta \in X_*(G)$, there exists a $g \in G$ such that $g\eta g^{-1} \in X_*(T)$. By the Weyl group invariance of the norm, 
	setting
	\[
		\|\eta\| := \|g\eta g^{-1}\|
	\]
	is well defined. By definition, the value of $\|\eta\|$ is invariant under conjugation by $G$.
	
	Now for any $v \in V$, we set
	\[
		m_G^\theta(v) := \inf \left\{ \frac{\langle \theta,\eta \rangle}{\|\eta\|}\ \middle|\ 1 \neq \eta \in X_*(G) \text{ such that } \lim_{z \to 0} \eta(z)v \text{ exists}\right\}.
	\]
	Obviously, $v \in V^{\unst}(G,\theta)$ if and only if $m_G^\theta(v) < 0$.
	
	\begin{defn}
		Let $v \in V^{\unst}(G,\theta)$. A one-parameter subgroup $\lambda \in X_*(G)$ is called \emph{adapted} to $v$ if
		\begin{itemize}
			\item $\lim_{z \to 0} \lambda(z)v$ exists, and
			\item $\frac{\langle \theta,\lambda \rangle}{\|\lambda\|} = m_G^\theta(v)$.
		\end{itemize}
		We call a one-parameter subgroup $\lambda$ \emph{primitive} if it is not divisible by any integer $d \geq 2$.
		Let 
		\[
			\Lambda_G^\theta(v) := \{ \lambda \in X_*(G) \mid \lambda \text{ is primitive and adapted to $v$} \}.
		\]
	\end{defn}
	
	Let $G_\lambda$ be the centralizer of the image of $\lambda$ inside $G$, and let $P_\lambda$ be the closed subgroup
	\[
		P_\lambda = \left\{ g \in G\ \middle| \ \lim_{z\to 0} \lambda(z)g\lambda(z)^{-1} \text{ exists}\right\}.
	\]
	Then $P_\lambda$ is a parabolic subgroup of $G$ with Levi factor $G_\lambda$. It can also be described as the closed subgroup whose Lie algebra $\mathfrak{p}_\lambda$ is the sum of the non-negative weight spaces $\bigoplus_{n \geq 0} \mathfrak{g}^\lambda_n$ with respect to the adjoint action of $\lambda$ on the Lie algebra $\mathfrak{g}$. The following result is \cite[Theorem~2.2]{Kempf:78}; we use the form stated in \cite[Theorem~2.15]{Hoskins:14}.

	\begin{thm}[Kempf] \label{t:Kempf}
		Let $v \in V^{\unst}(G,\theta)$. Then: 
		\begin{enumerate}
			\item We have $\Lambda_G^\theta(v) \neq \emptyset$.
			\item We have $\Lambda_G^\theta(gv) = g\Lambda_G^\theta(v)g^{-1}$. 
			\item For any two $\lambda,\lambda' \in \Lambda_G^\theta(v)$, the parabolic subgroups $P_\lambda$ and $P_{\lambda'}$ agree. We denote this subgroup by $P_v$.
			\item The subset $\Lambda_G^\theta(v) \sub X_*(G)$ is a $P_v$-conjugacy class.
			\item For every maximal torus $H \sub P_v$, there exists a unique $\lambda \in \Lambda_G^\theta(v) \cap X_*(H)$.
		\end{enumerate}
	\end{thm}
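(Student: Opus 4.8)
The plan is to follow Kempf's original argument, isolating its convex-geometric core. Write $q(\eta) := \langle\theta,\eta\rangle/\|\eta\|$ for $1 \neq \eta \in X_*(G)$; this is invariant under positive rescaling and, because $\theta$ is a character of $G$ (so $\langle\theta, g\eta g^{-1}\rangle = \langle\theta,\eta\rangle$) and $\|\blank\|$ is $G$-conjugation invariant by construction, also under conjugation. The condition ``$\lim_{z\to 0}\eta(z)v$ exists'' likewise transforms predictably: $\lim_{z\to0}(g\eta g^{-1})(z)(gv)$ exists iff $\lim_{z\to0}\eta(z)v$ does. Part~(2) is immediate from this: $\eta$ is primitive and adapted to $gv$ iff $g^{-1}\eta g$ is primitive and adapted to $v$, hence $\Lambda_G^\theta(gv) = g\Lambda_G^\theta(v)g^{-1}$.

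\textbf{Existence, part~(1).}
Fix a maximal torus $T$, and for $u \in V$ let $\operatorname{supp}_T(u) = \{\mu \in N_T(V) \mid u_\mu \neq 0\}$ be the support of its weight decomposition. For $\lambda \in X_*(T)_\R$ one has that $\lim_{z\to 0}\lambda(z)u$ exists iff $\langle\mu,\lambda\rangle \geq 0$ for all $\mu \in \operatorname{supp}_T(u)$, so these $\lambda$ form a rational polyhedral cone $C_T(u)$. Minimizing the scale-invariant $q$ over $C_T(u)\setminus\{0\}$ amounts to minimizing the linear form $\langle\theta,\blank\rangle$ over the compact convex body $C_T(u)\cap\{\|\lambda\| \leq 1\}$; by strict convexity of the Euclidean ball the minimizer $\lambda^*$ is unique, and a Lagrange-multiplier computation on the smallest face $F$ of $C_T(u)$ containing $\lambda^*$ in its relative interior shows that $\R_{>0}\lambda^*$ is the ray spanned by the orthogonal projection onto $\operatorname{span} F$ of the $(\blank,\blank)$-dual vector of $-\theta$. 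Since $F$, $\theta$, and (by polarization of $(\lambda,\lambda)\in\Z$) the inner product all have rational entries, this ray is rational; call $m_T^\theta(u) := q(\lambda^*)$ the corresponding torus value, and note it depends only on $C_T(u)$, hence only on $\operatorname{supp}_T(u) \subseteq N_T(V)$. Since $N_T(V)$ is finite, only finitely many values occur among $\{m_T^\theta(gv) : g\in G\}$; and since every destabilizing one-parameter subgroup of $G$ is $G$-conjugate into $X_*(T)$, the first paragraph gives $m_G^\theta(v) = \min_{g\in G} m_T^\theta(gv)$, attained at some $g_0$. If $\lambda^*$ realizes $m_T^\theta(g_0v)$ and $\lambda_0 \in X_*(T)$ is its primitive generator, then $g_0^{-1}\lambda_0 g_0 \in \Lambda_G^\theta(v) \neq \emptyset$, proving~(1).

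\textbf{Parts~(3)--(5).}
One inclusion of~(4) is elementary: if $\lambda\in\Lambda_G^\theta(v)$ and $p\in P_\lambda$, factor $p = u\ell$ with $u$ in the unipotent radical of $P_\lambda$ and $\ell\in G_\lambda$; since $\ell$ centralizes $\im\lambda$, $p\lambda p^{-1} = u\lambda u^{-1}$ is a $G$-conjugate of $\lambda$, so $q(p\lambda p^{-1}) = m_G^\theta(v)$ and primitivity is preserved, while $(p\lambda p^{-1})(z)v = p\bigl(\lambda(z)p^{-1}\lambda(z)^{-1}\bigr)\lambda(z)v$ has a limit as $z\to 0$ because $p^{-1}\in P_\lambda$ and $\lambda$ is adapted to $v$; hence $P_\lambda\cdot\lambda \subseteq \Lambda_G^\theta(v)$. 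The substantive assertion is the converse together with~(3): if $\lambda,\mu\in\Lambda_G^\theta(v)$ then $P_\lambda = P_\mu$ and $\mu$ is $P_\lambda$-conjugate to $\lambda$. This is Kempf's rigidity result; it is proved via his convexity estimates for the function $q$ on the spherical building $\Delta(G)$ of $G$ equipped with the metric induced by $\|\blank\|$. The upshot is that the locus where $q$ attains $m_G^\theta(v)$ on the set of destabilizing directions is a single closed facet, any two of whose points lie in a common apartment; thus there is a maximal torus $H$ with $\lambda,\mu\in X_*(H)_\R$ spanning the same face of $C_H(v)$, which by the torus uniqueness from part~(1) forces them to be proportional once primitive --- no, rather: within $H$ the optimal ray is unique, and the passage from the common apartment to $P_\lambda$-conjugacy uses that any two maximal tori of the connected group $P_\lambda$ are conjugate under $P_\lambda$; this simultaneously yields $P_\mu = P_\lambda =: P_v$ and establishes~(4). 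Finally, for~(5): given a maximal torus $H \sub P_v$, pick any $\lambda\in\Lambda_G^\theta(v)$; since $\Lambda_G^\theta(v)$ is one $P_v$-orbit and all maximal tori of $P_v$ are $P_v$-conjugate, some $P_v$-conjugate $\mu$ of $\lambda$ lies in $X_*(H)$, and its uniqueness is exactly the uniqueness of the optimal ray inside the fixed torus $H$ from part~(1).

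\textbf{Main obstacle.}
Everything outside the rigidity statement used in~(3)--(5) is bookkeeping with weight decompositions, parabolic subgroups, and convex geometry inside a single $X_*(T)_\R$. The genuine difficulty --- the technical heart of \cite{Kempf:78} --- is the convexity of $q$ on the building $\Delta(G)$ that forces all adapted one-parameter subgroups into a common apartment and a single $P_v$-conjugacy class; this is where the CAT(0)-type geometry of $\Delta(G)$ (equivalently, Kempf's ``state'' estimates) is indispensable, and it cannot be reduced to working in one fixed torus.
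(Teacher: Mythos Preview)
The paper does not prove this theorem: it is quoted from \cite{Kempf:78} (specifically \cite[Theorem~2.2]{Kempf:78}, in the form of \cite[Theorem~2.15]{Hoskins:14}), so there is no in-paper argument to compare against. Your sketch follows Kempf's original strategy --- reduce to a fixed maximal torus by conjugation, analyze the convex geometry of weight cones inside $X_*(T)_\R$ to get existence and rationality of an optimal ray, and then invoke the convexity/rigidity on the spherical building for the uniqueness-up-to-parabolic statements --- and is broadly sound as an outline.

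Two remarks. First, a presentational issue: the passage ``forces them to be proportional once primitive --- no, rather: within $H$ the optimal ray is unique'' reads as a live self-correction and should be rewritten; decide which line of argument you mean and state it cleanly. Second, and more substantively, you correctly identify the convexity of $q$ on the building as the technical heart, but your paragraph on (3)--(5) does not actually carry out that argument --- it gestures at it (``this is Kempf's rigidity result'') and then uses its conclusion. If the intent is a self-contained proof, this is a genuine gap: showing that any two adapted primitive one-parameter subgroups lie in a common apartment and coincide there requires Kempf's ``state'' estimates (equivalently the CAT(0)-type geometry of the building), and that is real work beyond what you have written. If the intent is only to sketch and cite, the outline is adequate, but the citation should appear at the point of use rather than only in the closing ``Main obstacle'' paragraph. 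Also note a small omission in part~(1): the strict-convexity uniqueness of the torus minimizer you state holds only when the torus minimum is negative; this is the case for the relevant $g_0 v$ since $m_T^\theta(g_0 v) = m_G^\theta(v) < 0$, but you should say so explicitly (and this is exactly the point the paper's Remark following the theorem flags for the semi-stable case).
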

	
	\begin{rem}
		If the vector $v$ is $\theta$-semi-stable, then we have $\Lambda_G^\theta(v) \neq \emptyset$ as well, and for a given torus $H$, there is still a primitive one-parameter subgroup $\lambda$ of $H$ which is adapted to $v$, but $\lambda$ need not be unique. This is because \cite[Lemma~1]{Kempf:18} requires a certain function to attain a positive value somewhere, which is not necessarily true if $v$ is semi-stable. Hence, we cannot conclude that $P_\lambda = P_{\lambda'}$, nor will $\Lambda_G^\theta(v)$ be a conjugacy class. 
	\end{rem}
	
	For our purposes, we need some further results of this kind. Fix a one-parameter subgroup $\lambda_0 \in X_*(G)$. Denote by $P_0 := P_{\lambda_0}$ its associated parabolic subgroup of $G$. Let $[\lambda_0] := [\lambda_0]_G$ be the $G$-conjugacy class inside $X_*(G)$. Define
	\[
		f\colon  [\lambda_0] \lra G/P_0
	\]
	by $f(\lambda) = P_\lambda$. Note that we have identified the closed points of $G/P_0$ with the set of all parabolic subgroups of $G$ which are conjugate to $P_0$. The map $f$ is well defined as for $\lambda = g\lambda_0g^{-1}$, we have $f(\lambda) = P_\lambda = gP_0g^{-1}$. The above argument also shows that $f$ is $G$-equivariant and surjective.
	
	\begin{lem} \label{l:conjClass}
		The fiber of $f$ in $P \in G/P_0$ is a single $P$-conjugacy class.
	\end{lem}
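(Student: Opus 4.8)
The plan is to understand the fiber $f^{-1}(P)$ concretely and show it is a single orbit under $P$. Recall that $f$ sends $\lambda \in [\lambda_0]$ to the parabolic $P_\lambda$. Since $f$ is $G$-equivariant and surjective, it suffices to analyze the fiber over the single point $P_0 = P_{\lambda_0}$, and then translate. So I would first show: $f^{-1}(P_0) = \{\lambda \in [\lambda_0] \mid P_\lambda = P_0\}$ is exactly the set of $G$-conjugates of $\lambda_0$ that lie in (and have parabolic equal to) $P_0$, and that this set is a single $P_0$-conjugacy class.

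**First I would** prove the inclusion that every such $\lambda$ is $P_0$-conjugate to $\lambda_0$. Given $\lambda \in [\lambda_0]$ with $P_\lambda = P_0$, write $\lambda = g\lambda_0 g^{-1}$ for some $g \in G$. Both $\lambda_0$ and $\lambda$ are one-parameter subgroups whose images lie in $P_0$ (indeed in a maximal torus of $P_0$). Choose a maximal torus $H_0 \subseteq P_0$ containing $\im\lambda_0$; then $gH_0g^{-1}$ is a maximal torus of $gP_0g^{-1} = g P_{\lambda_0} g^{-1} = P_{g\lambda_0 g^{-1}} = P_\lambda = P_0$, so both $H_0$ and $gH_0g^{-1}$ are maximal tori of $P_0$. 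Since all maximal tori of the (connected solvable-by-reductive) group $P_0$ are conjugate under $P_0$, there is $p \in P_0$ with $pgH_0g^{-1}p^{-1} = H_0$. Then $n := pg$ normalizes $H_0$, and $n\lambda_0 n^{-1} = p\lambda p^{-1} \in X_*(H_0)$. Now I invoke Theorem~\ref{t:Kempf}(5)-style uniqueness — more precisely, the relevant fact here is that within the torus $H_0$, the conjugacy class $[\lambda_0] \cap X_*(H_0)$ is a single orbit of the Weyl group $N_{P_0}(H_0)/H_0$ of $H_0$ in $P_0$, and every element of that Weyl group is represented by an element of $P_0$. Hence $n\lambda_0 n^{-1}$ is $P_0$-conjugate to $\lambda_0$, and therefore $\lambda = g\lambda_0 g^{-1}$ is $(p^{-1}\cdot(\text{something in }P_0))$-conjugate to $\lambda_0$, i.e.\ $\lambda$ lies in the $P_0$-conjugacy class of $\lambda_0$.

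**For the reverse inclusion** I must check that every $P_0$-conjugate $p\lambda_0 p^{-1}$ ($p \in P_0$) actually satisfies $P_{p\lambda_0 p^{-1}} = P_0$, so that it lies in $f^{-1}(P_0)$. This is immediate: $P_{p\lambda_0 p^{-1}} = pP_{\lambda_0}p^{-1} = pP_0 p^{-1} = P_0$ since $p \in P_0$. Combining the two inclusions, $f^{-1}(P_0)$ is precisely the $P_0$-conjugacy class of $\lambda_0$. For a general $P \in G/P_0$, write $P = gP_0g^{-1}$; then by $G$-equivariance $f^{-1}(P) = g\cdot f^{-1}(P_0) = g\,(P_0\text{-orbit of }\lambda_0) = (gP_0g^{-1})\text{-orbit of }g\lambda_0g^{-1} = P\text{-orbit of }g\lambda_0g^{-1}$, a single $P$-conjugacy class, as claimed.

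**The main obstacle** I anticipate is the rigorous handling of the maximal-torus step: one needs that maximal tori of the parabolic $P_0$ are all $P_0$-conjugate (standard, since $P_0$ is connected) and, more delicately, that the relative Weyl group $N_{P_0}(H_0)/H_0$ acts on $X_*(H_0) \cap [\lambda_0]$ transitively with all elements realized inside $P_0$ — this is where one must be careful, because it is the Weyl group of the Levi, not the full Weyl group $W$, that is available inside $P_0$. One clean way around this is to note that $[\lambda_0] \cap X_*(H_0)$ is contained in a single $N_G(H_0)/H_0 = W$-orbit, and that $P_\lambda = P_0$ forces $\lambda$ to lie in the same chamber data as $\lambda_0$ relative to $P_0$; the stabilizer consideration then pins down the orbit to the Levi Weyl group, all of whose elements lift to $N_{P_0}(H_0) \subseteq P_0$. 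Alternatively, and perhaps more efficiently, one can avoid choosing a torus altogether: given $\lambda, \lambda_0 \in [\lambda_0]$ with the same parabolic $P_0$, the limits $\lim_{z\to 0}\lambda(z)\cdot$ and the induced gradings on $\mathfrak{g}$ are compatible enough that $\lambda$ and $\lambda_0$ are conjugate by an element of the unipotent radical times the common Levi — but in any case the content is the transitivity of $P_0$ on the relevant set of one-parameter subgroups, which is the heart of the lemma.
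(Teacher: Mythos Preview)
Your approach can be made to work, but it takes a detour around a much simpler argument, and the key intermediate claim you state is actually false as written. You assert that $[\lambda_0]\cap X_*(H_0)$ is a single orbit of the Levi Weyl group $N_{P_0}(H_0)/H_0$; this is not true in general (for instance, if $\lambda_0$ is regular then $P_0$ is a Borel, the Levi Weyl group is trivial, yet $[\lambda_0]\cap X_*(H_0)$ is the full $W$-orbit). What \emph{is} true, and what salvages your argument, is that among elements of $[\lambda_0]\cap X_*(H_0)$ only $\lambda_0$ itself satisfies $P_\mu=P_0$: if $\mu=w\lambda_0$ and $P_\mu=P_0$, then $w$ normalizes $P_0$, hence lies in $P_0$ (self-normalization), hence represents an element of the Levi Weyl group, which fixes $\lambda_0$. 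With that correction your chain $p\lambda p^{-1}=n\lambda_0 n^{-1}=\lambda_0$ goes through.

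The paper's proof bypasses all of this by using the self-normalization of parabolics directly at the start, with no maximal tori or Weyl groups. Given $\lambda,\mu\in[\lambda_0]$ with $f(\lambda)=f(\mu)=P$, write $\lambda=g\lambda_0g^{-1}$ and $\mu=h\lambda_0h^{-1}$; then $gP_0g^{-1}=hP_0h^{-1}=P$, so $p_0:=g^{-1}h\in N_G(P_0)=P_0$, and a one-line computation gives $\mu=(gp_0g^{-1})\lambda(gp_0g^{-1})^{-1}$ with $gp_0g^{-1}\in P$. The converse is immediate. Your route rediscovers the same self-normalization fact but buried inside a torus-and-Weyl-group argument; the paper's version is both shorter and avoids the false intermediate statement.
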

	
	\begin{proof}
		Let $\lambda,\mu \in [\lambda_0]$ be such that $f(\lambda) = f(\mu) = P$. Choose $g, h \in G$ such that $\lambda = g\lambda_0g^{-1}$ and $\mu = h\lambda_0h^{-1}$. Then $P = gP_0g^{-1} = hP_0h^{-1}$ and thus $p_0 := g^{-1}h \in P_0$. We see that 
		\[
			\mu = h\lambda_0h^{-1} = gp_0\lambda_0p_0^{-1}g^{-1} = \bigl(\underbrace{gp_0g^{-1}}_{=: p \in P}\bigr)\,g\lambda_0g^{-1}\left(gp_0g^{-1}\right)^{-1} = p\lambda p^{-1}. 
		\]
		Conversely, if $f(\lambda) = P$ and $\mu = p\lambda p^{-1}$ for some $p \in P$, then it is easy to see that $f(\mu) = P$.
	\end{proof}
	
	Now let $v \in V$ and consider the set
	\[
		L_G(v) := \left\{ \lambda \in X_*(G)\ \middle| \  \lim_{z \to 0} \lambda(z)v \text{ exists} \right\}.
	\]
	This set can be described as follows. Consider $V$ as a representation $V(\lambda)$ of $\C^\times$ via $\lambda$, and consider the weight space decomposition $V(\lambda) = \bigoplus_{n \in \Z} V_n(\lambda)$, \textit{i.e.}\ 
	\[
		V_n(\lambda) := \{ w \in V \mid \lambda(z)w = z^nw \text{ (all $z \in \C^\times$)} \}.
	\] 
	Decompose the vector $v$ as $v = \sum_{n \in \Z} v_n(\lambda)$ with $v_n(\lambda) \in V_n(\lambda)$. We obtain $L_G(v) = \{\lambda \in X_*(G) \mid v_n(\lambda) = 0 \text{ (all $n<0$)}\}$.
	
	\begin{rem}
		For $\lambda \in L_G(v)$ and $p \in P_\lambda$, we have $p\lambda p^{-1} \in L_G(v)$.
	\end{rem}

	\begin{defn}
		We define $Y_{[\lambda_0]}(v) := f([\lambda_0] \cap L_G(v))$.
	\end{defn}

	\begin{prop} \label{p:closed}
		The set $Y_{[\lambda_0]}(v)$ is a Zariski-closed subset of\, $G/P_0$.
	\end{prop}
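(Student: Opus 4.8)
The plan is to describe $Y_{[\lambda_0]}(v)$ explicitly as the image under the projection $q\colon G \to G/P_0$ of a Zariski-closed subset of $G$, and then to use that such a projection sends fiberwise-closed sets to closed sets.

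First I would unwind the definition. Writing $V = \bigoplus_{n} V_n(\lambda_0)$ for the weight decomposition with respect to $\lambda_0$ and putting $V_{\geq 0}(\lambda_0) := \bigoplus_{n \geq 0} V_n(\lambda_0)$, one has for $g \in G$ that $g\lambda_0 g^{-1} \in L_G(v)$ if and only if $\lim_{z\to 0}\lambda_0(z)(g^{-1}v)$ exists, i.e.\ $g^{-1}v \in V_{\geq 0}(\lambda_0)$ (apply the linear automorphism $w \mapsto g^{-1}w$ to the limit defining membership in $L_G(v)$). Since $f(g\lambda_0 g^{-1}) = gP_0g^{-1}$, which is the point $gP_0$ of $G/P_0$, this yields
\[
	Y_{[\lambda_0]}(v) \;=\; \bigl\{\, gP_0 \in G/P_0 \;\big|\; g^{-1}v \in V_{\geq 0}(\lambda_0) \,\bigr\}.
\]
To check that the right-hand side is well defined, I would verify that $V_{\geq 0}(\lambda_0)$ is a $P_0$-submodule of $V$: for $p \in P_0$ and $w \in V_{\geq 0}(\lambda_0)$, writing $\lambda_0(z)(pw) = \bigl(\lambda_0(z)p\lambda_0(z)^{-1}\bigr)\bigl(\lambda_0(z)w\bigr)$ and letting $z \to 0$, both factors converge (the first by the definition of $P_0$, the second because $w$ has non-negative weights), so $pw \in V_{\geq 0}(\lambda_0)$; applying this also to $p^{-1}$ gives $p\,V_{\geq 0}(\lambda_0) = V_{\geq 0}(\lambda_0)$. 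Consequently the set $S := \{ g \in G \mid g^{-1}v \in V_{\geq 0}(\lambda_0) \}$ is stable under right multiplication by $P_0$, so $q^{-1}(q(S)) = S$ and $Y_{[\lambda_0]}(v) = q(S)$.

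It then remains to observe that $S$ is Zariski-closed and that $q(S)$ is closed. For the first point, $S$ is the preimage of the linear, hence closed, subspace $V_{\geq 0}(\lambda_0) \subseteq V$ under the morphism $G \to V$ given by $g \mapsto g^{-1}v$, which is the composition of inversion on $G$ with the orbit map of $v$. For the second, $q\colon G \to G/P_0$ is a surjective flat morphism of finite type, hence open; since $S$ is a union of fibers of $q$, one has $q(G \setminus S) = (G/P_0) \setminus q(S)$, and this set is open. Therefore $Y_{[\lambda_0]}(v) = q(S)$ is closed.

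The argument is short, and the only points I would be careful to state cleanly are that $V_{\geq 0}(\lambda_0)$ is $P_0$-stable — this is what makes the description of $Y_{[\lambda_0]}(v)$ descend from $G$ to $G/P_0$, and it is where the parabolic, rather than merely the one-parameter subgroup $\lambda_0$, enters — and that $q$ is open, which is what allows $q(S)$ to be recognized as closed from $S$ being a union of fibers.
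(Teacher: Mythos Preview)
Your proof is correct and follows essentially the same approach as the paper: both identify the preimage of $Y_{[\lambda_0]}(v)$ in $G$ as $S = \{g \in G \mid g^{-1}v \in V_{\geq 0}(\lambda_0)\}$, show this is Zariski-closed as the preimage of a linear subspace, and then use that the quotient map $G \to G/P_0$ sends $P_0$-invariant closed sets to closed sets. Your version is slightly more explicit in justifying the $P_0$-stability of $V_{\geq 0}(\lambda_0)$ and the openness of $q$, whereas the paper simply asserts that $S$ is the full preimage of $Y_{[\lambda_0]}(v)$ and that $\pi$ takes $P_0$-invariant closed sets to closed sets.
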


	\begin{proof}
		To show this proposition, we consider the subset
		\[
			X_{[\lambda_0]}(v) := \left\{ g \in G\ \middle| \  \lim_{z \to 0} g\lambda_0(z)g^{-1}v \text{ exists} \right\}.
		\]
		Evidently, $X_{[\lambda_0]}(v)$ is the inverse image of $Y_{[\lambda_0]}(v)$ under the quotient map $\pi\colon  G \to G/P_0$. This shows that $X_{[\lambda_0]}(v)$ is $P_0$-invariant. As $\pi$ maps $P_0$-invariant closed subsets of $G$ to closed subsets of $G/P_0$, the  proposition will follow from the lemma below.
	\end{proof}
	
	\begin{lem}
		The subset $X_{[\lambda_0]}(v)$ is Zariski closed in $G$.
	\end{lem}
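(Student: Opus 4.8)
The goal is to show that $X_{[\lambda_0]}(v) = \{ g \in G \mid \lim_{z\to 0} g\lambda_0(z)g^{-1}v \text{ exists}\}$ is Zariski closed in $G$. The idea is to realize this set as the preimage of a closed subset under a suitable morphism of varieties, using the weight-space description of the limit-existence condition developed just before the statement.

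First I would reduce to a linear-algebra condition. Fix the weight-space decomposition $V = \bigoplus_{n \in \Z} V_n(\lambda_0)$ with respect to $\lambda_0$, and let $p_n \colon V \to V_n(\lambda_0)$ denote the corresponding projections. For a fixed $g \in G$, the one-parameter subgroup $\lambda = g\lambda_0 g^{-1}$ has weight spaces $V_n(\lambda) = gV_n(\lambda_0)$, and decomposing $v = \sum_n v_n(\lambda)$ with $v_n(\lambda) \in V_n(\lambda)$ we have, by the description of $L_G(v)$ recalled above, that $\lim_{z\to 0}\lambda(z)v$ exists if and only if $v_n(\lambda) = 0$ for all $n < 0$. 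Now $v_n(\lambda) = g\, p_n(g^{-1}v)$, so the condition $g \in X_{[\lambda_0]}(v)$ is equivalent to $p_n(g^{-1}v) = 0$ for all $n < 0$ (applying $g^{-1}$, which is invertible). Since there are only finitely many negative weights $n$ with $V_n(\lambda_0) \neq 0$, this is a finite system of equations.

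Next I would package this as a morphism. The action map gives a morphism $a \colon G \times V \to V$, $(g,w) \mapsto gw$, and hence a morphism $\varphi \colon G \to V^{\oplus k}$, $g \mapsto \bigl(p_{n_1}(g^{-1}v), \ldots, p_{n_k}(g^{-1}v)\bigr)$, where $n_1 < \cdots < n_k$ enumerate the negative integers with $V_{n_j}(\lambda_0) \neq 0$; this is a regular map because $g \mapsto g^{-1}$ is a morphism on $G$, the action is algebraic, and the projections $p_{n_j}$ are linear. By the previous paragraph, $X_{[\lambda_0]}(v) = \varphi^{-1}(0)$, which is closed since $\{0\}$ is closed in $V^{\oplus k}$ and $\varphi$ is continuous in the Zariski topology. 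This proves the lemma, and with Proposition~\ref{p:closed} in hand (via the quotient map $\pi\colon G \to G/P_0$ being closed on $P_0$-invariant sets), the closedness of $Y_{[\lambda_0]}(v)$ follows.

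I do not anticipate a serious obstacle here; the only point requiring a little care is the bookkeeping translating "$\lim_{z\to 0} g\lambda_0(z)g^{-1}v$ exists" into the vanishing of $p_n(g^{-1}v)$ for negative $n$ — one must note that conjugating $\lambda_0$ by $g$ simply transports the weight-space decomposition by $g$, so that the negative part of $v$ with respect to $g\lambda_0 g^{-1}$ is $g$ times the negative part of $g^{-1}v$ with respect to $\lambda_0$. Everything else is the standard fact that preimages of closed sets under morphisms of varieties are closed.
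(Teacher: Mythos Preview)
Your proof is correct and follows essentially the same approach as the paper: both reduce the limit-existence condition to $g^{-1}v \in V_{\geq 0} = \bigoplus_{n\geq 0} V_n(\lambda_0)$ via the observation $V_n(g\lambda_0 g^{-1}) = gV_n(\lambda_0)$, and then conclude closedness as the preimage of a closed set under the morphism $g \mapsto g^{-1}v$. The only cosmetic difference is that the paper phrases the last step as a fiber product over $V_{\geq 0} \hookrightarrow V$, whereas you write out the negative-weight projections explicitly and take the preimage of $0$; these are the same argument.
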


	\begin{proof}
		We introduce some notation. Let $V_n := V_n(\lambda_0)$ and $V_{\geq 0} := \bigoplus_{m \geq 0} V_m$. For $g \in G$, we let 
		\[
			V_n^{(g)} := V_n\left(g\lambda_0g^{-1}\right) = gV_n
		\]
		and $\smash{v_n^{(g)}} = v_n(g\lambda_0g^{-1}) \in V_n^{(g)}$, so $v = \sum_{n \in \Z} v_n^{(g)}$. Then
		\begin{align*}
			X_{[\lambda_0]}(v) &= \left\{ g \in G \mid v_n^{(g)} = 0 \text{ (all $n<0$)} \right\} \\
			&= \left\{ g \in G \mid g^{-1}v \in V_{\geq 0} \right\}.
		\end{align*}
		This can be translated into a fiber product diagram
		\[
			\begin{tikzcd}
				X_{[\lambda_0]}(v) \arrow{r}{} \arrow{d}{} & G \arrow{d}{} &[-2em] g \arrow[mapsto]{d}{} \\
				V_{\geq 0} \arrow{r}{} & V & g^{-1}v
			\end{tikzcd}
		\]
		which shows that $X_{[\lambda_0]}(v)$ is indeed closed.
	\end{proof}

	\section{Lifts of fixed points} \label{s:lifts}
	
	Let $y \in (V^{\st}(G,\theta)/G)^{\TT}$, and let $x \in V^{\st}(G,\theta)$ be such that $\pi(x) = y$. Then for every $t \in \TT$ there exists a unique $g \in G$ such that $t.x = gx$ (recall that the $G$-action on the stable locus is assumed to be free). This yields a map $\rho = \rho_x\colon  \TT \to G$. Note that $\rho$ depends on $x$ and not just on $y$. The following argument can also be found in \cite[Lemma~3.1]{Weist:13}. We give it for completeness.
	
	\begin{lem} \label{l:morphism}
		The map $\rho$ is a morphism of algebraic groups.
	\end{lem}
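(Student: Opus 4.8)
The plan is to show that $\rho$ is a morphism by first establishing that it is a group homomorphism, then promoting it to an algebraic morphism via a general principle: a homomorphism $\TT \to G$ from an algebraic torus to an algebraic group is automatically a morphism of varieties provided it is constructible (or, more concretely, provided it is the composite of the abstract-group structure with enough regularity to pin it down). The key input is the freeness of the $G$-action on $V^{\st}(G,\theta)$, which makes $\rho$ well defined as a set map, together with the fact that the graph of the action map supplies the regularity.

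First I would check the homomorphism property. For $s,t \in \TT$ we have $(st).x = s.(t.x) = s.(\rho(t)x) = \rho(t)(s.x) = \rho(t)\rho(s)x$, using that the $G$- and $\TT$-actions commute. On the other hand $(st).x = \rho(st)x$. Since $G$ acts freely on $x \in V^{\st}(G,\theta)$, we conclude $\rho(st) = \rho(t)\rho(s)$; as $\TT$ is abelian this is the homomorphism condition (after noting $\rho(1) = 1$ by the same cancellation). Next I would produce the regularity. Consider the action morphism $a\colon \TT \times G \to V \times V$, $(t,g) \mapsto (t.x,\, gx)$, and the diagonal $\Delta_V \subset V \times V$; its preimage $Z := a^{-1}(\Delta_V) = \{(t,g) \mid t.x = gx\}$ is a closed subvariety of $\TT \times G$. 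By freeness, the projection $Z \to \TT$ is bijective, and it is $\TT$-equivariant; its inverse is exactly the graph of $\rho$, i.e.\ $t \mapsto (t,\rho(t))$. So it suffices to see that this bijective morphism $Z \to \TT$ has a regular inverse. Since we work in characteristic zero, a bijective morphism of varieties onto a normal variety ($\TT$ being smooth, hence normal) is an isomorphism; hence $t \mapsto (t,\rho(t))$ is a morphism, and composing with the projection to $G$ shows $\rho$ is a morphism. Finally, being a morphism that is also a group homomorphism, $\rho$ is a morphism of algebraic groups.

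The main obstacle is the promotion from set map to morphism: one must be careful that $Z \to \TT$ is genuinely bijective (this is precisely where Assumption~\ref{a:general_assumption} enters — without freeness the fiber over $t$ would be a torsor under $\Stab_G(x)$ and the argument collapses) and that the normality–bijectivity criterion applies (valid over $\C$ for the reduced structure, and $\TT$ is smooth). An alternative, avoiding the normality criterion, is to note that $\rho$ is constructible since its graph $Z$ is closed and the projection is finite (indeed bijective), and a constructible group homomorphism between algebraic groups over an algebraically closed field is automatically a morphism; either route works, and I would present whichever is shorter in the paper's conventions. In positive characteristic this step can fail — a bijective morphism need not be an isomorphism (Frobenius twists) — which, as the authors note, is exactly why the characteristic-zero hypothesis is essential here; see Example~\ref{e:positive_char}.
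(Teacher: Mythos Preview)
Your approach is essentially the same as the paper's: both introduce the closed subset $Z=\{(t,g)\mid t.x=gx\}$ (the paper writes it as $H\subset G\times\TT$), observe that the projection to $\TT$ is bijective, and use characteristic zero to conclude it is an isomorphism, so that $\rho$ is the composite of its inverse with the other projection. The paper's version is a touch cleaner because it notes at the outset that $H$ is a closed \emph{subgroup} of $G\times\TT$; then $p_2\colon H\to\TT$ is a bijective homomorphism of algebraic groups, and in characteristic zero such groups are smooth, which makes the ``bijective $\Rightarrow$ isomorphism'' step immediate---your bare statement that a bijective morphism onto a normal variety is an isomorphism needs an extra word (irreducibility or finiteness of the source) to be airtight, and recognising $Z$ as a subgroup supplies exactly that while also absorbing your separate verification of the homomorphism property.
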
 
	
	\begin{proof}
		Let $H = \{ (g,t) \in G \times \TT \mid t.x = gx \}$. This is a closed subgroup of $G \times \TT$. The projection $p_2\colon  H \to \TT$ is surjective, and it is also injective by Assumption~\ref{a:general_assumption}. As we work in characteristic 0, this means that $p_2$ is an isomorphism. Let $p_1\colon  H \to G$ be the projection to the first component. Then $\rho$ is given by $\rho = p_1\circ p_2^{-1}$.
	\end{proof}
	
	\begin{ex} \label{e:positive_char}
		The following example shows that Lemma~\ref{l:morphism} fails in positive characteristic. Let $k$ be an algebraically closed field of characteristic $p > 0$. Let $V = \mathbb{A}^2$ and $G = \mathbb{G}_m$, which acts on $V$ via $g\cdot (v_1,v_2) = (g^pv_1,g^pv_2)$. For the character $\theta = \id$, the stable locus is $V^{\st}(G,\theta) = \A^2\setminus\{(0,0)\}$. The group $G$ acts freely on the stable locus because the characteristic is $p$. The quotient $V^{\st}(G,\theta)/G$ is isomorphic to $\P^1$.

		Let $\mathcal{T} = \mathbb{G}_m$ act on $V$ by $t.(v_1,v_2) = (v_1,tv_2)$. This action commutes with the $G$-action, and the fixed points of the induced action on $\P^1$ are $[1:0]$ and $[0:1]$. For the lift $x = (0,1)$, we consider the group
		\[
			H := \{ (g,t) \in G \times \mathcal{T} \mid t.x = g\cdot x \} = \left\{ (g,t) \in G \times \mathcal{T} \mid t = g^p \right\}
		\]
		and the projections $p_1\colon  H \to G$ and $p_2\colon  H \to \mathcal{T}$. We argue that $p_2$ is not an isomorphism. We observe that $p_1$ is an isomorphism; its inverse is given by $p_1^{-1}(g) = (g,g^p)$. The composition $p_2\circ p_1^{-1}\colon  G \to \mathcal{T}$ maps $g$ to~$g^p$. This is a bijective morphism which is not an isomorphism. Therefore, $p_2$ is not an isomorphism either.
	\end{ex}

	Via $\rho$, we obtain an action of $\TT$ on every $G$-representation; for a representation $W$ of $G$, we denote the representation of $\TT$ obtained via $\rho$ by ${}_\rho{W}$. We may in particular consider ${}_\rho{V}$. This new $\TT\!$-action by $\rho$ differs in general from the $\TT\!$-action on $V$ that we started with. We look at the subset
	\[
		V_\rho := \left\{v \in V \mid t.v = \rho(t)v \text{ for all } t \in \TT \right\}
	\]
	on which these two $\TT$-actions agree. The following properties of $V_\rho$ are obvious. 
	
	\begin{lem}\label{lem43}
		Let $y$, $x$, and $\rho = \rho_x$ be as above.
		\begin{enumerate}
			\item $V_\rho$ is a linear subspace of $V$.
			\item $V_\rho$ is a $\TT\!$-subrepresentation of both $V$ $($with the initial $\TT\!$-action$)$ and ${}_\rho{V}$.
			\item We have $x \in V_\rho$. 
			\item We have $y \in \pi(V_\rho \cap V^{\st}(G,\theta)) \sub (V^{\st}(G,\theta)/G)^{\TT}$. 
		\end{enumerate}
	\end{lem}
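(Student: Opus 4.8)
The plan is to verify the four assertions essentially by unwinding definitions, since — as the authors note — these properties are "obvious." I would organize the argument as follows. For (1), note that $V_\rho = \bigcap_{t \in \TT}\ker(v \mapsto t.v - \rho(t)v)$ is an intersection of kernels of linear maps $V \to V$ (each $t$ acts linearly on $V$, and $\rho(t) \in G$ acts linearly since $V$ is a representation of $G$), hence a linear subspace. For (2), I would check $\TT\!$-stability with respect to both actions. Fix $v \in V_\rho$ and $s \in \TT$. For the initial action, $s.v$ lies in $V_\rho$ because the $\TT\!$-action and the $G$-action commute: for any $t \in \TT$ we have $t.(s.v) = s.(t.v) = s.(\rho(t)v) = \rho(t)(s.v)$, where the last equality again uses commutativity of the two actions. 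For the $\TT\!$-action ${}_\rho V$ given by $s \cdot_\rho v := \rho(s)v$, we have $\rho(s)v \in V_\rho$ because for any $t \in \TT$, using that $\rho(t)$ and $\rho(s)$ commute in $G$ (the image of $\rho$ is a torus, hence abelian) together with $v \in V_\rho$, one gets $t.(\rho(s)v) = \rho(s)(t.v) = \rho(s)\rho(t)v = \rho(t)\rho(s)v$. So $V_\rho$ is stable under both.

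For (3), this is immediate from the definition of $\rho = \rho_x$: by construction, for every $t \in \TT$ the element $\rho(t) \in G$ is the unique group element with $t.x = \rho(t)x$, so $x$ satisfies the defining condition of $V_\rho$. For (4), since $x \in V^{\st}(G,\theta)$ by hypothesis and $x \in V_\rho$ by (3), we have $x \in V_\rho \cap V^{\st}(G,\theta)$, and therefore $y = \pi(x) \in \pi(V_\rho \cap V^{\st}(G,\theta))$. It remains to see that $\pi(V_\rho \cap V^{\st}(G,\theta)) \subseteq (V^{\st}(G,\theta)/G)^{\TT}$: if $v \in V_\rho \cap V^{\st}(G,\theta)$ and $t \in \TT$, then $t.v = \rho(t)v$ lies in the same $G$-orbit as $v$, so $\pi(t.v) = \pi(v)$; since the $\TT\!$-action on the quotient is induced from the one upstairs, this says $t.\pi(v) = \pi(v)$, i.e.\ $\pi(v)$ is a $\TT\!$-fixed point.

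There is no real obstacle here; the only point requiring a little care is (2), where one must be explicit about \emph{which} commutativity is being used in each of the two sub-claims — commutativity of the $G$- and $\TT\!$-actions on $V$ for the initial action, and commutativity within the torus $\im\rho \subseteq G$ (together with the same) for ${}_\rho V$. Everything else is a direct translation of the definitions of $V_\rho$, of $\rho_x$, and of the induced $\TT\!$-action on the GIT quotient.
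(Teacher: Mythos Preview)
Your proof is correct; each step is the natural verification by unwinding the definitions. The paper itself supplies no proof at all, simply declaring the four properties ``obvious,'' so your write-up is exactly the routine expansion the authors omitted. One minor simplification you could make in (2): once you know the two $\TT$-actions agree on $V_\rho$ (this is the defining condition), stability under one immediately gives stability under the other, so only one of the two checks is really needed.
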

	
	The last statement of Lemma~\ref{lem43} is the reason to consider $V_\rho$ in the first place. Let us define 
	\[
		F_\rho := \pi\left(V_\rho \cap V^{\st}(G,\theta)\right). 
	\]
	We will see in the following that $F_\rho$ is actually a closed subset of $V^{\st}(G,\theta)/G$ and is moreover the connected/irreducible component of $y$ inside $(V^{\st}(G,\theta)/G)^\TT$. We will also give a more intrinsic description of $F_\rho$. For this description, it will not be important that $\rho$ comes from the lift of a fixed point.
	
	Fix a morphism of algebraic groups $\rho\colon  \TT \to G$. Let $G_\rho$ be the centralizer of $\im(\rho)$.
	
	\begin{lem}
		The subspace $V_\rho$ is $G_\rho$-invariant.
	\end{lem}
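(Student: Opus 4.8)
The plan is to verify the invariance directly from the definitions, using only two facts: that the $G$-action and the $\TT$-action on $V$ commute, and that $G_\rho$ is by definition the centralizer of $\im(\rho)$ in $G$. Concretely, I would fix $g \in G_\rho$ and $v \in V_\rho$ and show that $gv \in V_\rho$, i.e.\ that $t.(gv) = \rho(t)(gv)$ for every $t \in \TT$.

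First I would use the commutativity of the two actions to rewrite $t.(gv) = g(t.v)$. Then, since $v \in V_\rho$, the defining relation $t.v = \rho(t)v$ gives $t.(gv) = g\rho(t)v$. Finally, because $g$ centralizes $\im(\rho)$, we have $g\rho(t) = \rho(t)g$, and hence $t.(gv) = \rho(t)gv = \rho(t)(gv)$, which is precisely the condition for $gv$ to lie in $V_\rho$. As $g \in G_\rho$ and $t \in \TT$ were arbitrary, this shows $G_\rho \cdot V_\rho \subseteq V_\rho$.

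I do not expect any genuine obstacle here: the statement is a formal consequence of the commutation relations, and in particular it uses neither Assumption~\ref{a:general_assumption} nor the hypothesis that $\rho$ arises as $\rho_x$ for a lift $x$ of a fixed point, so it applies to an arbitrary morphism of algebraic groups $\rho \colon \TT \to G$. One could equivalently observe that $V_\rho$ is the equalizer of the original $\TT$-representation structure on $V$ and the one pulled back along $\rho$, and that each element of $G_\rho$ induces an endomorphism of $V$ intertwining both structures; but the elementwise verification above is already immediate.
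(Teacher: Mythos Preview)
Your proof is correct and is essentially identical to the paper's own argument: the paper also fixes $v \in V_\rho$ and $g \in G_\rho$ and computes $t.gv = gt.v = g\rho(t)v = \rho(t)gv$ to conclude.
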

	
	\begin{proof}
		Let $v \in V_\rho$ and $g \in G_\rho$. To show that $gv \in V_\rho$, consider $t \in \TT$. We compute
		\[
			t.gv = gt.v = g\rho(t)v = \rho(t)gv, 
		\]
		which shows the claim.
	\end{proof}
	
	\begin{rem} \label{r:weights2}
		Let $T$ be a maximal torus which contains the image of $\rho$. Recall the notation from the last paragraph of Section~\ref{s:setup}. Regarding $V_\rho$ as a representation of $T$, we obtain $(V_\rho)_\mu(T) = V_\mu(T) \cap V_{\rho^*(\mu)}(\TT)$. Consequently, 
		\[
			N_T(V_\rho) = \left\{ \mu \in X^*(T) \mid (\mu,\rho^*(\mu)) \in N_{T \times \TT}(V) \right\}.
		\]
	\end{rem}

	We now move to the central result of this section.
        
{\samepage	\begin{thm} \label{t:stab_subgroup}
		Let $\rho\colon  \TT \to G$ be a morphism of algebraic groups. Then
		\begin{enumerate}
			\item\label{t:stab_subgroup-1} $V_\rho \cap V^{\sst}(G,\theta) = V_\rho^{\sst}(G_\rho,\theta)$; 
			\item\label{t:stab_subgroup-2} $V_\rho \cap V^{\st}(G,\theta) = V_\rho^{\st}(G_\rho,\theta)$.
		\end{enumerate}
  \end{thm}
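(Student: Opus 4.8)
\smallskip
\noindent\emph{Proof proposal.}
I would deal first with the inclusions ``$\subseteq$'' in both~\eqref{t:stab_subgroup-1} and~\eqref{t:stab_subgroup-2}, which are the straightforward ones: if $v\in V_\rho$ and $\eta\in X_*(G_\rho)\subseteq X_*(G)$ is a one-parameter subgroup with $\lim_{z\to0}\eta(z)v$ existing, then the limit lies in the $G_\rho$-invariant linear subspace $V_\rho$, and the relevant inequality $\langle\theta,\eta\rangle\geq0$ (respectively $\langle\theta,\eta\rangle>0$) does not depend on whether $\eta$ is viewed as a one-parameter subgroup of $G$ or of $G_\rho$; hence $\theta$-(semi-)stability of $v$ for $G$ implies the same for $G_\rho$ by the Hilbert--Mumford criterion. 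The content of the theorem is in the reverse inclusions, and I would deduce both from one lemma: \emph{if $v\in V_\rho$ and $\lambda\in X_*(G)$ is such that $\lim_{z\to0}\lambda(z)v$ exists, then some $G$-conjugate $\nu$ of $\lambda$ lies in $X_*(G_\rho)$ and still satisfies that $\lim_{z\to0}\nu(z)v$ exists}. Granting this, given $v\in V_\rho^{\sst}(G_\rho,\theta)$ (respectively $v\in V_\rho^{\st}(G_\rho,\theta)$) and an arbitrary (respectively non-trivial) $\eta\in X_*(G)$ along which $v$ has a limit, the lemma produces such a $\nu$; since $\langle\theta,\blank\rangle$ is constant on $G$-conjugacy classes ($\theta$ being a character of $G$) and conjugation preserves non-triviality, the Hilbert--Mumford criterion for $G_\rho$ yields $\langle\theta,\eta\rangle=\langle\theta,\nu\rangle\geq0$ (respectively $>0$), so $v\in V^{\sst}(G,\theta)$ (respectively $v\in V^{\st}(G,\theta)$).

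It remains to prove the lemma, and for this I would use the Zariski-closed sets $Y_{[\lambda]}(v)\subseteq G/P_\lambda$ from Section~\ref{s:Opt}. The key point is that $Y_{[\lambda]}(v)$ is stable under the action of $\TT$ on $G/P_\lambda$ induced by $\rho$. This follows because $L_G(v)$ is stable under conjugation by $\rho(\TT)$: for $\mu\in L_G(v)$ and $t\in\TT$ one has $\rho(t)^{-1}v=t^{-1}.v$ because $v\in V_\rho$, and, the $G$- and $\TT\!$-actions commuting,
\[
\bigl(\rho(t)\mu(z)\rho(t)^{-1}\bigr)v=t^{-1}.\bigl(\rho(t)\bigl(\mu(z)v\bigr)\bigr),
\]
whose limit for $z\to0$ exists (it equals $t^{-1}.\bigl(\rho(t)\lim_{z\to0}\mu(z)v\bigr)$). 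As $[\lambda]$ is a full $G$-conjugacy class and $f$ is $G$-equivariant, $Y_{[\lambda]}(v)=f\bigl([\lambda]\cap L_G(v)\bigr)$ is then $\rho(\TT)$-stable. It is non-empty, since $\lambda\in L_G(v)$ gives $f(\lambda)=P_\lambda\in Y_{[\lambda]}(v)$, and by Proposition~\ref{p:closed} it is closed in the projective variety $G/P_\lambda$, hence complete. Borel's fixed point theorem, applied to the connected solvable group $\rho(\TT)$, then produces a $\rho(\TT)$-fixed point $f(\mu)$ with $\mu\in[\lambda]\cap L_G(v)$; being fixed means exactly $\rho(\TT)\subseteq P_\mu$. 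Choosing a maximal torus $H$ of $P_\mu$ containing $\rho(\TT)$ and conjugating $\mu$ inside $P_\mu$ into $X_*(H)$ gives $\nu:=p\mu p^{-1}$ for a suitable $p\in P_\mu$; one still has $\nu\in L_G(v)$ (for $\mu\in L_G(v)$ and $p\in P_\mu$, $p\mu p^{-1}\in L_G(v)$), and $\nu$ commutes with $\rho(\TT)$ since both lie in the abelian group $H$, so $\nu\in X_*(G_\rho)$. This proves the lemma.

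For part~\eqref{t:stab_subgroup-1} on its own there is a quicker argument via Kempf's theory that I would also mention: if $v\in V_\rho$ is $\theta$-unstable for $G$, then $\Lambda_G^\theta(v)$ is unchanged by the $\TT\!$-action---the function $m_G^\theta$ and the ``adapted'' condition only see the $G$-action, which commutes with $\TT$---while part~(2) of Theorem~\ref{t:Kempf} together with $t.v=\rho(t)v$ gives $\Lambda_G^\theta(v)=\Lambda_G^\theta(t.v)=\rho(t)\Lambda_G^\theta(v)\rho(t)^{-1}$; so $\Lambda_G^\theta(v)$ is stable under conjugation by $\rho(\TT)$, which forces $\rho(\TT)$ to normalise, and hence to lie in, the parabolic $P_v$. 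The unique adapted one-parameter subgroup inside a maximal torus of $P_v$ containing $\rho(\TT)$ then centralises $\rho(\TT)$, so it lies in $X_*(G_\rho)$ and destabilises $v$ for $G_\rho$; by contraposition this gives ``$\supseteq$'' in~\eqref{t:stab_subgroup-1}.

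I expect the real obstacle to be the lemma, specifically the upgrade from ``$\lambda\in X_*(G)$ has a limit on $v$'' to ``a $G$-conjugate of $\lambda$ lying in $G_\rho$ has a limit on $v$'': conjugating $\lambda$ into $G_\rho$ abstractly is easy, but one must do so without destroying the convergence of $\lambda(z)v$ as $z\to0$. This is exactly the purpose of introducing $Y_{[\lambda]}(v)$ and of proving it closed (Proposition~\ref{p:closed}); once its $\rho(\TT)$-invariance and completeness are available, Borel's fixed point theorem delivers a parabolic $P_\mu\supseteq\rho(\TT)$ in the right conjugacy class, and passing to a maximal torus of $P_\mu$ is routine.
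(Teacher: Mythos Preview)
Your proof is correct and follows the same core strategy as the paper---use the closed subvariety $Y_{[\lambda]}(v)\subseteq G/P_\lambda$ together with Borel's fixed point theorem to locate a parabolic containing $\rho(\TT)$, then pass to a common maximal torus. There is one organisational difference worth noting. You package the argument into a single lemma (``any $\lambda\in L_G(v)$ is $G$-conjugate to some $\nu\in L_G(v)\cap X_*(G_\rho)$'') and derive both \eqref{t:stab_subgroup-1} and \eqref{t:stab_subgroup-2} from it uniformly; the paper instead treats \eqref{t:stab_subgroup-1} via Kempf's Theorem~\ref{t:Kempf} and only invokes $Y_{[\lambda]}(v)$ for \eqref{t:stab_subgroup-2}. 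Your unified route shows that Theorem~\ref{t:Kempf} is not actually needed here, and your passage from ``$P_\mu$ is a $\rho(\TT)$-fixed point'' to ``$\rho(\TT)\subseteq P_\mu$'' (parabolics are self-normalising) is a bit more direct than the paper's detour through Lemma~\ref{l:conjClass} and equation~(\ref{e:p}). The alternative Kempf argument you sketch for \eqref{t:stab_subgroup-1} is exactly the paper's argument, phrased more compactly.
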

  }
	
	\begin{proof}
		\eqref{t:stab_subgroup-1} Suppose that $v \in V_\rho \cap V^{\sst}(G,\theta)$. Then the Hilbert--Mumford criterion shows at once that $v$ belongs to $V_\rho^{\sst}(G_\rho,\theta)$.
	
		For the converse inclusion, we employ Kempf's theory of optimal one-parameter subgroups; see Section~\ref{s:Opt}. Let $v \in \smash{V_\rho^{\sst}(G_\rho,\theta)}$. Assume that $v \in \smash{V^{\unst}(G,\theta)}$. Choose a primitive adapted one-parameter subgroup $\lambda \in \smash{\Lambda_G^{\theta}(v)}$. Let $t \in \mathcal{T}$, and consider $\rho(t)\lambda\rho(t)^{-1} \in X_*(G)$. It is primitive,  and we get
		\begin{align*}
			\rho(t)\lambda(z)\underbrace{\rho(t)^{-1}v}_{=t^{-1}.v} = \rho(t)t^{-1}.(\lambda(z)v) \xto{}{z \to 0} \rho(t)t^{-1}.\left(\lim_{z \to 0}\lambda(z)v\right).
		\end{align*}
		Moreover,
		\[
			\frac{\langle \theta, \rho(t)\lambda\rho(t)^{-1}\rangle}{\|\rho(t)\lambda\rho(t)^{-1}\|} = \frac{\langle \theta, \lambda\rangle}{\|\lambda\|} = m_G^\theta(v),
		\]
		which shows $\rho(t)\lambda\rho(t)^{-1} \in \Lambda_G^\theta(v)$. By Theorem~\ref{t:Kempf}, there exists a $p \in P_\lambda$ such that 
		\begin{equation} \label{e:p} \tag{*}
			p\lambda p^{-1} = \rho(t)\lambda\rho(t)^{-1}.
		\end{equation}
		We re-write this identity as
		\[
			\lambda(z)\rho(t)^{-1}\lambda(z)^{-1} = \rho(t)^{-1}p\lambda(z)p^{-1}\lambda(z)^{-1}
		\]
		which, as $p^{-1}$ lies in $P_\lambda$, converges as $z \to 0$. This shows that $\rho(t)$ itself belongs to $P_\lambda$. 
		As $\rho(t)$ is a semi-simple element, it is contained in a maximal torus $T'$ of $P_\lambda$; in fact, the whole image of $\rho$ lies in some maximal torus $T'$. For the same reason, $\lambda$ is contained in a maximal torus $T'' \sub G_\lambda \sub P_\lambda$. The two maximal tori are $P_\lambda$-conjugate, so we find a $q \in P_\lambda$ such that $qT'q^{-1} = T''$.
		In particular,  $q\rho(t)q^{-1} \in T'' \sub G_\lambda$, which implies 
		\[
			q\rho(t)q^{-1}\lambda(z) = \lambda(z)q\rho(t)q^{-1}
		\]
		for all $t\in\TT$ and all $z\in \C^\times$.
		As this identity holds for every $t$, we find $q^{-1}\lambda(z)q \in G_\rho$ for every $z$. As $q \in P_\lambda$, we obtain
		\[
			\lambda' := q^{-1}\lambda q \in \Lambda_G^\theta(v) \cap X_*(G_\rho).
		\]
		This says $\lambda'$ is a one-parameter subgroup of $G_\rho$ for which the limit $\lim_{z\to0}\lambda'(z)v$ exists and $\langle \theta,\lambda' \rangle = \norm{\lambda'}m_G^\theta(v) < 0$.
		This gives a contradiction to $v \in V_\rho^{\sst}(G_\rho,\theta)$.

		\eqref{t:stab_subgroup-2} Again, as in the first item, the inclusion from left to right follows readily from the Hilbert--Mumford criterion.

		Now to the converse inclusion. Let $v \in V_\rho^{\st}(G_\rho,\theta)$, and assume $v$ is not $\theta$-stable for the $G$-action on~$V$. By~\eqref{t:stab_subgroup-1}, we know that $v$ is at least $\theta$-semi-stable for the $G$-action. So there must be a non-trivial one-parameter subgroup $\lambda \in X_*(G)$ for which $\lim_{z \to 0}\lambda(z)v$ exists---so $\lambda \in L_G(v)$---and such that $\langle \theta,\lambda \rangle = 0$. Then $\langle \theta,g\lambda g^{-1} \rangle =0$ for every $g \in G$, and as in \eqref{t:stab_subgroup-1} we see that $\rho(t)\lambda\rho(t)^{-1} \in L_G(v)$ for every $t \in \mathcal{T}$. Therefore, $\mathcal{T}$ acts via $\rho$ on $[\lambda] \cap L_G(v)$, and by the $G$-equivariance of the map $f\colon  [\lambda] \cap L_G(v) \to G/P_\lambda$, the Zariski-closed subset $Y_{[\lambda]}(v) \sub G/P_\lambda$ (see Proposition~\ref{p:closed}) is $\mathcal{T}$-invariant. By Borel's fixed point theorem, there is a fixed point for the $\mathcal{T}$-action on $Y_{[\lambda]}(v)$. Let $P$ be a $\mathcal{T}$-fixed point in $Y_{[\lambda]}(v)$. Then there exists a $\mu \in [\lambda] \cap L_G(v)$ such that $P = P_\mu$. By the fixed point property, we get
		\[
			f(\rho(t)\mu\rho(t)^{-1}) = f(\mu).
		\]
		As the fiber $f^{-1}(P_\mu)$ is a $P_\mu$ conjugacy class (see Lemma~\ref{l:conjClass}), we find a $p \in P_\mu$ such that $p\mu p^{-1} = \rho(t)\mu\rho(t)^{-1}$. Now we are in equation (\ref{e:p}) of the proof of \eqref{t:stab_subgroup-1}. We may conclude in the same way that $q^{-1}\mu q \in L_G(v) \cap X_*(G_\rho) = L_{G_\rho}(v)$ for some $q \in P_\mu$. But $\langle \theta,q^{-1}\mu q \rangle = \langle \theta,\mu \rangle = \langle \theta,\lambda \rangle = 0$,  which contradicts $v \in V_\rho^{\st}(G_\rho,\theta)$.
		\qedhere
	\end{proof}
	
	\section{An embedding of quotients} \label{s:embedding}
	
	We now look at the inclusion of the linear subspace $V_\rho$ into $V$. The intersection with $V^{\st}(G,\theta)$ yields a closed embedding $V_\rho^{\st}(G_\rho,\theta) \to V^{\st}(G,\theta)$. It is $G_\rho$-invariant. We obtain a morphism $i_\rho\colon  V_\rho^{\st}(G_\rho,\theta)/G_\rho \to V^{\st}(G,\theta)/G$. We will show the following in this section.
	
	\begin{thm} \label{t:closed_immersion}
		The induced morphism $i_\rho\colon  V_\rho^{\st}(G_\rho,\theta)/G_\rho \to V^{\st}(G,\theta)/G$ is a closed immersion.
	\end{thm}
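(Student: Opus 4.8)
The plan is to verify the three hypotheses mentioned in the introduction: $i_\rho$ is proper, injective on $\C$-points, and injective on tangent spaces. For a morphism between smooth varieties over $\C$, these three properties together force $i_\rho$ to be a closed immersion. (Properness plus injectivity on points gives that $i_\rho$ is a finite, hence affine, morphism which is a bijection onto a closed subvariety; injectivity on tangent spaces at every point then upgrades this to an unramified monomorphism, and a proper unramified monomorphism is a closed immersion.) So the work splits into three essentially independent pieces.

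\smallskip

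\emph{Properness.} The source is a geometric quotient $V_\rho^{\st}(G_\rho,\theta)/G_\rho$ and the target is $V^{\st}(G,\theta)/G$. The natural thing is to invoke Luna's étale slice machinery, as announced in the introduction: since $G$ acts freely on $V^{\st}(G,\theta)$ (Assumption~\ref{a:general_assumption}) and $G_\rho$ acts freely on $V_\rho^{\st}(G_\rho,\theta)$ (which follows from Theorem~\ref{t:stab_subgroup}\eqref{t:stab_subgroup-2} together with the corresponding assumption transported through the identification $V_\rho \cap V^{\st}(G,\theta) = V_\rho^{\st}(G_\rho,\theta)$), both quotient maps are principal bundles. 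I would use the result of Luna on closedness of orbits and images: the point is that a $G_\rho$-orbit in $V_\rho \cap V^{\st}(G,\theta)$ is closed in $V^{\st}(G,\theta)$ (being the intersection of the closed $G$-orbit through that point with the closed subspace $V_\rho$), so $i_\rho$ sends closed points to closed points in a way controlled by the slice theorem; more precisely one checks via the valuative criterion that any limit of points of $F_\rho = i_\rho(V_\rho^{\st}(G_\rho,\theta)/G_\rho)$ inside $V^{\st}(G,\theta)/G$ already lies in $F_\rho$ because $V_\rho$ is closed and $\TT$-invariant. So properness reduces to showing $F_\rho$ is closed in $V^{\st}(G,\theta)/G$, which in turn follows from $V_\rho \cap V^{\st}(G,\theta)$ being closed in $V^{\st}(G,\theta)$ (true, as $V_\rho$ is a linear subspace) together with the fact that $\pi$ is a (closed) quotient map of a free action. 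I expect this is the most delicate point and where the cited result of Luna does the real work; one must be careful that "closed in the stable locus" interacts correctly with the good quotient of the semistable locus.

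\smallskip

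\emph{Injectivity on points.} Suppose $v, v' \in V_\rho \cap V^{\st}(G,\theta)$ have the same image in $V^{\st}(G,\theta)/G$, i.e.\ $v' = gv$ for some $g \in G$. I want to replace $g$ by an element of $G_\rho$. Since $v, v' \in V_\rho$, for each $t \in \TT$ we have $t.v = \rho(t) v$ and $t.v' = \rho(t) v'$; combining with $v' = gv$ and the commutativity of the two actions gives $\rho(t) g v = g \rho(t) v$, i.e.\ $g^{-1}\rho(t) g \rho(t)^{-1}$ fixes $v$. Because $G$ acts freely on the stable locus, the stabilizer of $v$ is trivial, so $g^{-1}\rho(t)g = \rho(t)$ for all $t$, i.e.\ $g \in G_\rho$. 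Hence $v$ and $v'$ already lie in the same $G_\rho$-orbit, proving $i_\rho$ is injective on $\C$-points. This step is short and uses the freeness assumption crucially.

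\smallskip

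\emph{Injectivity on tangent spaces.} Using that both quotient maps are principal bundles, the tangent space to $V^{\st}(G,\theta)/G$ at $\pi(v)$ is identified with $T_vV / \mathfrak{g}\cdot v = V/\mathfrak{g}v$, and similarly the tangent space to the source at the image of $v$ is $V_\rho / \mathfrak{g}_\rho v$, where $\mathfrak{g}_\rho = \operatorname{Lie}(G_\rho)$. The differential of $i_\rho$ is the map induced by the inclusion $V_\rho \hookrightarrow V$, so injectivity on tangent spaces amounts to the statement $V_\rho \cap \mathfrak{g}v = \mathfrak{g}_\rho v$. The inclusion $\supseteq$ is clear since $\mathfrak{g}_\rho v \subseteq V_\rho$ (as $V_\rho$ is $G_\rho$-stable, hence $\mathfrak{g}_\rho$-stable, and contains $v$) and $\mathfrak{g}_\rho v \subseteq \mathfrak{g}v$. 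For $\subseteq$: decompose $\mathfrak{g}$ under the $\TT$-action induced by $\rho$ (equivalently under $\operatorname{Ad}\circ\rho$) as $\mathfrak{g} = \bigoplus_\chi \mathfrak{g}_\chi$, with $\mathfrak{g}_0 = \mathfrak{g}_\rho$; since $v \in V_\rho$, the element $\xi v$ for $\xi \in \mathfrak{g}_\chi$ lies in the weight-$(\rho^*\!\text{-shifted})$ piece, and it lands in $V_\rho$ (the subspace where the two $\TT$-actions coincide) exactly when $\chi = 0$ contributes — more carefully, if $\xi = \sum_\chi \xi_\chi$ and $\xi v \in V_\rho$, projecting onto weight components and using freeness ($\xi v = 0 \Rightarrow \xi = 0$ on the stable locus, since $v$ has trivial stabilizer also at the Lie algebra level) forces $\xi_\chi v = 0$ for $\chi \neq 0$, so $\xi v = \xi_0 v \in \mathfrak{g}_\rho v$. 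This weight-space bookkeeping, together with the infinitesimal freeness of the action, is the technical heart of the tangent-space step.

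\smallskip

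Assembling: $i_\rho$ is proper, injective on points, and a monomorphism on tangent spaces at every point; since target and source are smooth (by étale local triviality of the principal bundles, as noted after Assumption~\ref{a:general_assumption}), $i_\rho$ is an unramified proper monomorphism, hence a closed immersion. The main obstacle I anticipate is the properness argument — pinning down exactly which form of Luna's results to cite and checking that closedness of $V_\rho \cap V^{\st}(G,\theta)$ in the stable locus transfers to closedness of $F_\rho$ in the quotient without stability issues at the boundary.
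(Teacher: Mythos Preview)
Your three-step strategy and your arguments for injectivity on points and on tangent spaces match the paper's almost exactly. (One minor remark: in the tangent-space step, once you decompose $\xi = \sum_\chi \xi_\chi$ under $\operatorname{Ad}\circ\rho$, the vectors $\xi_\chi v$ lie in distinct weight spaces of the ``difference'' $\TT$-action on $V$, so $\xi v \in V_\rho$ already forces $\xi_\chi v = 0$ for $\chi \neq 0$ by directness of weight spaces---infinitesimal freeness is not needed for your conclusion $\xi v \in \mathfrak{g}_\rho v$, although the paper does use it to prove the slightly stronger $\xi \in \mathfrak{g}_\rho$.)

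The genuine gap is in properness. The map $\pi\colon V^{\st}(G,\theta) \to V^{\st}(G,\theta)/G$ is \emph{not} proper (its fibres are copies of $G$), so closedness of the non-$G$-invariant subset $V_\rho \cap V^{\st}(G,\theta)$ does not transfer to closedness of $F_\rho$; and even if $F_\rho$ were known to be closed, that alone would not yield properness of $i_\rho$. The paper's route is quite different from your sketch. It first shows that the map of \emph{affine} GIT quotients $V_\rho/\!\!/G_\rho \to V/\!\!/G$ is finite (Proposition~\ref{p:finiteness}). The Luna result invoked is not the slice theorem but the finiteness theorem of \cite{Luna:75}: for a reductive subgroup $H$ of a reductive group acting on an affine variety $X$, the map $X^H/\!\!/N(H) \to X/\!\!/G$ is finite. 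To realise $V_\rho$ as a fixed-point locus, the paper introduces the auxiliary space $X = V \times \C^r$ with the $(G\times\TT)$-action $(g,t)*(v,w) = (g\,t.v,\,tw)$ and the subgroup $\Delta_\rho = \{(\rho(t)^{-1},t) \mid t \in \TT\}$; then $X^{\Delta_\rho} = V_\rho \times \{0\}$ and $N_{G\times\TT}(\Delta_\rho) = G_\rho \times \TT$, so Luna's theorem applies. Properness of $i_\rho$ then follows from a commutative square relating the semi-stable quotients to the affine quotients (both vertical maps projective, bottom map finite) together with base change to the stable locus (Lemma~\ref{l:properness}). This auxiliary construction is the idea your sketch is missing.
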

	
	In a previous version of this paper, the above theorem was only shown under the assumption that there were no $T$-invariants for a maximal torus $T$ of $G$. Sergey Mozgovoy suggested to us an argument to get rid of this assumption. It goes along the lines of \cite[Lemma~6.6]{Mozgovoy:23} and uses the machinery of \'{e}tale slices. After we received his suggestion, we found a related but different argument which allows us to show a finiteness statement on the affine quotients, which might be of independent interest. This is Proposition~\ref{p:finiteness}. From this proposition and two other lemmas, Theorem~\ref{t:closed_immersion} will follow.
	
	We use the following criterion for closed immersions. It can be stated in greater generality. We have adapted it to our situation. A proof of (a more general version of) the criterion below is due to Osserman.
        
	\begin{prop} \label{p:osserman}
		Let $f\colon  X \to Y$ be a morphism of complex varieties. If
		\begin{enumerate}
			\item $f$ is proper,
			\item $f$ is injective on closed points, and
			\item the induced map $d_xf\colon  T_xX \to T_{f(x)}Y$ is injective for every closed point $x \in X$,
		\end{enumerate}
		then $f$ is a closed immersion.
	\end{prop}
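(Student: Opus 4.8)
The plan is to show first that $f$ is a \emph{finite} morphism, and then that the associated homomorphism of sheaves of rings $\mathcal{O}_Y \to f_*\mathcal{O}_X$ is surjective; since a finite morphism is affine, these two facts together say exactly that $f$ is a closed immersion. For the first point, I would check that $f$ is quasi-finite and then invoke that a proper quasi-finite morphism is finite (for instance via Zariski's main theorem). Quasi-finiteness comes from hypothesis~(2): over any closed point $y\in Y$ the scheme-theoretic fibre $f^{-1}(y)$ is of finite type over $\C$ and has at most one closed point, so, finite-type $\C$-schemes being Jacobson, it is empty or zero-dimensional; hence the fibre-dimension function $x\mapsto \dim_x f^{-1}(f(x))$, which is upper semicontinuous, vanishes at every closed point of $X$ and therefore identically.

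Once $f$ is finite, the surjectivity of $\mathcal{O}_Y\to f_*\mathcal{O}_X$ is local on $Y$, so I would pass to an affine open $\operatorname{Spec} A\subseteq Y$ with preimage $\operatorname{Spec} B$, where $B$ is module-finite over $A$. Then $M:=\operatorname{coker}(A\to B)$ is a finitely generated $A$-module, and by Nakayama's lemma it vanishes provided $M\otimes_A A/\mathfrak{m}=0$ for every maximal ideal $\mathfrak{m}$ of $A$. Right-exactness of the tensor product identifies $M\otimes_A A/\mathfrak{m}$ with $\operatorname{coker}\bigl(\C\to B/\mathfrak{m}B\bigr)$, and $B/\mathfrak{m}B$ is the coordinate ring of the scheme-theoretic fibre of $f$ over the closed point $y$ corresponding to $\mathfrak{m}$. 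So the whole question reduces to showing that every non-empty such fibre is $\operatorname{Spec}\C$.

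This is where hypothesis~(3) enters. Fix a closed point $y$ with $f^{-1}(y)\neq\emptyset$. By hypothesis~(2) the finite $\C$-algebra $B/\mathfrak{m}B$ has a single maximal ideal, so it is local Artinian with residue field $\C$; equivalently $f^{-1}(y)$ has a unique point $x$, which is closed in $X$, and $\mathcal{O}_{f^{-1}(y),x}=\mathcal{O}_{X,x}/\mathfrak{m}_y\mathcal{O}_{X,x}$ has maximal ideal $\mathfrak{n}=\mathfrak{m}_x/\mathfrak{m}_y\mathcal{O}_{X,x}$. Applying hypothesis~(3) at the closed point $x$: the injectivity of $d_xf\colon T_xX\to T_{f(x)}Y$ is, by duality of (co)tangent spaces, equivalent to the surjectivity of $\mathfrak{m}_y/\mathfrak{m}_y^2\to\mathfrak{m}_x/\mathfrak{m}_x^2$, that is, to $\mathfrak{m}_x=\mathfrak{m}_x^2+\mathfrak{m}_y\mathcal{O}_{X,x}$. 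Reducing this equality modulo $\mathfrak{m}_y\mathcal{O}_{X,x}$ gives $\mathfrak{n}=\mathfrak{n}^2$, whence $\mathfrak{n}=0$ by Nakayama's lemma, so $B/\mathfrak{m}B=\mathcal{O}_{f^{-1}(y),x}=\C$, as needed.

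The step I expect to be the main obstacle is the finiteness of $f$: one must upgrade the purely set-theoretic hypothesis ``at most one closed point in each closed fibre'' to genuine quasi-finiteness of $f$ at every point of $X$ (the Jacobson/upper-semicontinuity argument above), and then quote the non-trivial fact that proper plus quasi-finite implies finite. A secondary point to handle carefully is that the unique point $x$ of a closed fibre really is a closed point of $X$, so that hypothesis~(3) is available there, and that the passage between ``$d_xf$ injective'' and ``$\mathfrak{m}_y/\mathfrak{m}_y^2\to\mathfrak{m}_x/\mathfrak{m}_x^2$ surjective'' is the correct one.
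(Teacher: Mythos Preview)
Your argument is correct and is the standard route to this result. Note, however, that the paper does not actually prove Proposition~\ref{p:osserman}: it merely states the criterion, remarks that it holds in greater generality, and attributes a proof to Osserman. So there is no ``paper's own proof'' to compare against; you have supplied what the paper omits.

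A couple of minor comments on the write-up. First, the upper-semicontinuity step is fine but slightly roundabout: once you know every closed fibre is zero-dimensional, you can argue directly that the closed subset $\{x\in X\mid \dim_x f^{-1}(f(x))\geq 1\}$ contains no closed point of $X$, and since $X$ is Jacobson this set is empty. Second, the worry you flag about whether the unique point $x$ of a closed fibre is closed in $X$ is easily dispatched: $x$ is a $\C$-point of the finite-type $\C$-scheme $f^{-1}(y)$, hence a $\C$-point of $X$, hence closed. Finally, the identification $B/\mathfrak{m}B=\mathcal{O}_{X,x}/\mathfrak{m}_y\mathcal{O}_{X,x}$ uses that an Artinian ring with a single maximal ideal is already local (so equals its localization there); you use this implicitly and it is worth saying once.
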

	
	We will prove these three properties for the morphism $i_\rho$. We need some auxiliary results. The following is the main result of \cite{Luna:75}.  
	
	\begin{thm}[Luna]
		Let $X$ be an affine variety, $G$ a reductive algebraic group which acts on $X$, and $H \sub G$ a closed reductive subgroup. Then $N_G(H)$ is reductive, and $X^H/\!\!/N_G(H) \to X/\!\!/G$ is a finite morphism.
	\end{thm}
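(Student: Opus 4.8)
The plan is to treat the two assertions in turn: the reductivity of $N:=N_G(H)$ is a short preliminary, and the real work goes into the finiteness statement, which I would reduce to a statement about null-cones and then settle with Kempf's theory. First, that $N_G(H)$ (and likewise $Z_G(H)$) is reductive is a standard structural fact in characteristic zero; I would either quote it or recall the short argument: $U:=R_u(N^\circ)$ is characteristic in $N^\circ$, hence normalized by $H$, one has $U\cap H=\{1\}$ since a normal unipotent subgroup of the reductive group $H$ is trivial, and the $H$-action on $U$ (equivalently on $\operatorname{Lie}U$), together with the fact that fixed-point subgroups of reductive groups acting on algebraic groups by automorphisms are reductive, forces $U=\{1\}$. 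This is the only genuinely new group-theoretic input; everything below uses it freely.

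For the finiteness of $X^H/\!\!/N\to X/\!\!/G$, I would first reduce to the case $X=V$ a finite-dimensional $G$-representation. Embed $X$ equivariantly as a closed $G$-stable subvariety of such a $V$ (possible since $\C[X]$ is a rational $G$-module generated by a finite-dimensional submodule). Then $X^H=X\cap V^H$ is closed and $N$-stable in $V^H$, and since $G$ and $N$ are reductive the restriction maps $\C[V]^G\twoheadrightarrow\C[X]^G$ and $\C[V^H]^N\twoheadrightarrow\C[X^H]^N$ are surjective, so in the evident commutative square with bottom row $X/\!\!/G\to V/\!\!/G$ and top row $X^H/\!\!/N\to V^H/\!\!/N$ both horizontal arrows are closed immersions. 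Hence it suffices to prove $V^H/\!\!/N\to V/\!\!/G$ is finite: then $X^H/\!\!/N\hookrightarrow V^H/\!\!/N\to V/\!\!/G$ is finite and factors through the monomorphism $X/\!\!/G\hookrightarrow V/\!\!/G$, so $X^H/\!\!/N\to X/\!\!/G$ is finite as well. Now $\C[V]^G$ and $\C[V^H]^N$ are nonnegatively graded finitely generated $\C$-algebras with degree-zero part $\C$, and $\C[V]^G\to\C[V^H]^N$ is graded; by graded Nakayama, finiteness of this ring map is equivalent to the fibre over the cone point $\bar 0\in V/\!\!/G$ of $V^H/\!\!/N\to V/\!\!/G$ being set-theoretically $\{\bar 0\}\subset V^H/\!\!/N$. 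Writing $\mathcal N_G=\{v\in V\mid 0\in\overline{Gv}\}$ for the null-cone and $\mathcal N_N(V^H)$ for the analogous null-cone of the $N$-action on $V^H$, a closed point $[v]_N$ of $V^H/\!\!/N$ lies over $\bar0$ iff $v\in V^H\cap\mathcal N_G$, and equals $\bar0$ iff $v\in\mathcal N_N(V^H)$. So the whole statement reduces to: \emph{every $v\in V^H$ with $0\in\overline{Gv}$ already satisfies $0\in\overline{Nv}$.}

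For this key step, fix $v\in V^H$, $v\neq 0$, with $0\in\overline{Gv}$, and apply Kempf's theory (\cite{Kempf:78}; the null-cone version of Theorem~\ref{t:Kempf}) with a $G$-invariant --- hence $H$-invariant --- norm on $X_*(T)_\R$. The set $\Lambda_G(v)$ of primitive optimal destabilizing one-parameter subgroups is non-empty, all its members share one parabolic $P_v:=P_\lambda$, and it is a single $P_v$-conjugacy class. Since $v$ is $H$-fixed, for $h\in H$ and $\lambda\in\Lambda_G(v)$ one has $\lim_{t\to0}(h\lambda(t)h^{-1})v=h\lim_{t\to0}\lambda(t)(h^{-1}v)=h\cdot0=0$ with the same norm and destabilizing weight, so $h\lambda h^{-1}\in\Lambda_G(v)$; hence $hP_vh^{-1}=P_{h\lambda h^{-1}}=P_v$ for all $h\in H$, i.e.\ $H\subseteq N_G(P_v)=P_v$. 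Now fix $\lambda_0\in\Lambda_G(v)$, so that $L:=G_{\lambda_0}$ is a Levi factor of $P_v$; since $H$ is a reductive subgroup of the parabolic $P_v$, the classical Mostow-type fact gives $u\in R_u(P_v)$ with $uHu^{-1}\subseteq L$. Put $\mu:=u^{-1}\lambda_0 u$. As $u\in P_v$ and $\Lambda_G(v)$ is a $P_v$-conjugacy class, $\mu\in\Lambda_G(v)$; and for $h\in H$, $uhu^{-1}\in L=Z_G(\im\lambda_0)$ commutes with $\lambda_0$, so conjugating by $u^{-1}$ gives $h\mu(z)=\mu(z)h$ for all $z$, whence $\im\mu\subseteq Z_G(H)\subseteq N$. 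Finally $\lim_{z\to0}\mu(z)v=0$ (since $\mu$ is destabilizing) and $\mu(z)v\in V^H$ for all $z$ (since $\mu(z)\in Z_G(H)$ and $v\in V^H$), so $0\in\overline{Nv}$, as required.

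The main obstacle is precisely the step of upgrading a destabilizing one-parameter subgroup of $G$ to one lying in $N$. Everything turns on combining Kempf's rigidity (that the optimal class $\Lambda_G(v)$ is one conjugacy class under a single parabolic $P_v$, which $H$ is then forced into) with the classical fact that a reductive subgroup of a parabolic is, after conjugation by the unipotent radical, contained in a Levi subgroup; making these two statements interact cleanly is the crux. A secondary point is the input that $N_G(H)$ is reductive, which is used both in the linearization reduction and even to make sense of $V^H/\!\!/N$. One could instead follow Luna's slice-theoretic route in the spirit of the étale-slice argument referred to before Proposition~\ref{p:osserman}, but the null-cone argument above is the most self-contained.
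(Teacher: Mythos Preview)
The paper does not prove this theorem; it cites it as the main result of \cite{Luna:75} and notes only that Luna's argument ``uses the machinery of \'etale slices''. Your proposal therefore supplies an actual proof where the paper gives none, and your finiteness argument is correct. Your route---embed $X$ in a $G$-module, reduce via graded Nakayama to a comparison of null-cones, and settle that comparison with Kempf's optimal one-parameter subgroups---is genuinely different from Luna's slice-theoretic proof. It is in fact much closer in spirit to what the paper itself does elsewhere: the key manoeuvre (take an optimal $\lambda$, observe that $H$ permutes the optimal class and hence lies in $P_v$, then conjugate $\lambda$ into the centralizer of $H$) is exactly the template of the proof of Theorem~\ref{t:stab_subgroup}. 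What your approach buys is self-containment within the Kempf framework of Section~\ref{s:Opt}; what Luna's buys is a proof that avoids the linear embedding and slots directly into his general slice theory.

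One caveat: your sketch for the reductivity of $N_G(H)$ does not work as written---the invoked fact about fixed-point subgroups does not by itself force $R_u(N^\circ)=\{1\}$, since nothing rules out a non-trivial $H$-fixed unipotent piece. As you say, this is standard and can simply be cited, so the gap is harmless; just do not rely on the sketch as it stands.
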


	The proof uses the machinery of \'{e}tale slices, which requires the ground field to be algebraically closed of characteristic zero. Let us mention that a version of this result exists in positive characteristic as well; see \cite[Theorem~1.1]{BGM:19}. We use Luna's result to show the following. 
	
	\begin{prop} \label{p:finiteness}
		The morphism $V_\rho/\!\!/G_\rho \to V/\!\!/G$ is finite.
	\end{prop}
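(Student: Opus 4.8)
The plan is to deduce the proposition from Luna's theorem applied to an auxiliary reductive group acting on $V$. First I would form $\widetilde G := G \times \TT$, which acts on $V$ by $(g,t)\cdot v := g(t.v)$; this is well defined because the $G$- and $\TT$-actions on $V$ commute, and $\widetilde G$ is reductive. Inside $\widetilde G$ consider the subgroup
\[
  \Gamma := \bigl\{ (\rho(t)^{-1}, t) \bigm| t \in \TT \bigr\},
\]
which is the image of $\TT$ under the homomorphism of algebraic groups $t \mapsto (\rho(t)^{-1}, t)$; being the image of a torus, $\Gamma$ is itself a torus, in particular closed and reductive, as Luna's theorem requires.

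Next I would carry out two identifications. A vector $v$ is fixed by $\Gamma$ if and only if $\rho(t)^{-1}(t.v) = v$, that is, $t.v = \rho(t)v$, for every $t \in \TT$; hence $V^{\Gamma} = V_\rho$. For $(g,s) \in \widetilde G$ one computes $(g,s)(\rho(t)^{-1},t)(g,s)^{-1} = (g\rho(t)^{-1}g^{-1}, t)$ because $\TT$ is abelian, so $(g,s)$ normalizes $\Gamma$ precisely when $g\rho(t)^{-1}g^{-1} = \rho(t)^{-1}$ for all $t$, i.e.\ when $g \in G_\rho$; hence $N_{\widetilde G}(\Gamma) = G_\rho \times \TT$. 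Luna's theorem then shows that $G_\rho \times \TT$ is reductive (so in particular $G_\rho$ is reductive) and that the restriction morphism
\[
  V_\rho /\!\!/ (G_\rho \times \TT) \lra V /\!\!/ (G \times \TT)
\]
is finite.

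It remains to replace the quotients by $G_\rho \times \TT$ and $G \times \TT$ by those by $G_\rho$ and $G$. Since $\rho(t) \in G_\rho$ for every $t$, the $\TT$-orbit of any point of $V_\rho$ is contained in its $G_\rho$-orbit, so $\TT$ acts trivially on the ring $\C[V_\rho]^{G_\rho}$, which gives $\C[V_\rho]^{G_\rho \times \TT} = \C[V_\rho]^{G_\rho}$. On the other side, $\C[V]^{G \times \TT} = (\C[V]^{G})^{\TT}$ is merely a subring of $\C[V]^{G}$. Under these identifications the displayed finite morphism is, on coordinate rings, the composite
\[
  \C[V]^{G \times \TT} \hookrightarrow \C[V]^{G} \lra \C[V_\rho]^{G_\rho},
\]
where the second arrow is restriction of functions from $V$ to $V_\rho$ --- exactly the ring map defining $V_\rho/\!\!/G_\rho \to V/\!\!/G$. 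Finiteness of the composite means $\C[V_\rho]^{G_\rho}$ is a finitely generated module over $\C[V]^{G \times \TT}$; since $\C[V]^{G \times \TT} \subseteq \C[V]^{G}$, the same finite set of generators exhibits $\C[V_\rho]^{G_\rho}$ as a finitely generated $\C[V]^{G}$-module, which is precisely the statement that $V_\rho/\!\!/G_\rho \to V/\!\!/G$ is finite.

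The step I expect to need the most care is the bookkeeping of the two commuting torus-type actions --- getting $V^{\Gamma} = V_\rho$ and $N_{\widetilde G}(\Gamma) = G_\rho \times \TT$ right --- together with the verification that the extra quotient by $\TT$ collapses harmlessly on the source (because $\rho(\TT) \subseteq G_\rho$) while on the target it only passes to a subring of $\C[V]^{G}$, so that Luna's finiteness over $V/\!\!/(G \times \TT)$ genuinely upgrades to finiteness over $V/\!\!/G$.
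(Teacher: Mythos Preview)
Your proof is correct and uses the same core idea as the paper: apply Luna's theorem to the ``anti-diagonal'' torus $\Gamma = \{(\rho(t)^{-1},t)\}$ inside $G \times \TT$, identify $V^\Gamma = V_\rho$ and $N_{G\times\TT}(\Gamma) = G_\rho \times \TT$, and then pass from finiteness over $\C[V]^{G\times\TT}$ to finiteness over the larger ring $\C[V]^G$. The only difference is that the paper carries out this argument on the auxiliary space $X = V \times \C^r$ with the action $(g,t)*(v,w) = (gt.v,tw)$, so that $X^{\Delta_\rho} = V_\rho \times \{0\}$, and then uses a commutative square with the map $V \to V \times \C^r$, $v \mapsto (v,0)$, to transport Luna's finiteness back to $V_\rho/\!\!/G_\rho \to V/\!\!/G$. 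Your direct application to $V$ avoids this detour; the extra $\C^r$ factor in the paper's version does no real work, and the final step (a module finite over a subring is finite over any intermediate ring) is the same in both arguments --- you just make it explicit where the paper leaves it as ``so the top map is as well.''
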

	
	\begin{proof}
		Let $X = V \times \C^r$, where $r$ is the rank of $\mathcal{T}$. On $X$ we consider the action of $\bar{G} = G \times \mathcal{T}$ via
		\[
			(g,t) * (v,w) = (gt.v, tw).
		\]
		Consider the reductive subgroup $\Delta_\rho := \{ (\rho(t)^{-1},t) \mid t \in \mathcal{T} \}$ of $\bar{G}$. We see that
		$
			N_{\bar{G}}(\Delta_\rho) = G_\rho \times \TT
		$
		and $X^{\Delta_\rho} = V_\rho \times \{0\}$. The action of $G_\rho \times \TT$ on $V_\rho \times \{0\}$ is given by $(g,t) * (v,0) = (gt.v,0) = (g\rho(t)v,0)$. This shows that
		\[
			\C\left[V_\rho \times \{0\}\right]^{G_\rho \times \TT} \cong \C\left[V_\rho\right]^{G_\rho}.
		\]
		We obtain the following commutative diagram:
		\[\begin{tikzcd}
			{V_\rho/\!\!/G_\rho} & {V/\!\!/G} \\
			{\left(V_\rho \times \{0\}\right)/\!\!/\left(G_\rho \times \mathcal{T}\right)} & {(V \times \C^r)/\!\!/(G \times \mathcal{T})\rlap{.}}
			\arrow["\cong", from=1-1, to=2-1]
			\arrow[from=1-1, to=1-2]
			\arrow[from=1-2, to=2-2]
			\arrow[from=2-1, to=2-2]
		\end{tikzcd}\]
		The right-hand map comes from $V \to V \times \C^r,\ v \mapsto (v,0)$. The bottom map is a finite morphism by Luna's result, so the top map is as well.
	\end{proof}

	\begin{lem} \label{l:properness}
		The morphism $i_\rho$ is proper.
	\end{lem}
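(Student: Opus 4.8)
The plan is to deduce properness of $i_\rho$ from Proposition~\ref{p:finiteness} together with the standard fact that a $\theta$-semistable GIT quotient is projective over the corresponding affine quotient. First I would pass from the stable loci to the semistable ones: by Theorem~\ref{t:stab_subgroup}\,\eqref{t:stab_subgroup-1}, the linear inclusion $V_\rho \hookrightarrow V$ restricts to a $G_\rho$-equivariant closed immersion $V_\rho^{\sst}(G_\rho,\theta) \hookrightarrow V^{\sst}(G,\theta)$, and composing with the quotient map $\pi$ and invoking the universal property of the good quotient $V_\rho^{\sst}(G_\rho,\theta) \to V_\rho^{\sst}(G_\rho,\theta)/\!\!/G_\rho$ produces a morphism
\[
	\bar\imath_\rho\colon  V_\rho^{\sst}(G_\rho,\theta)/\!\!/G_\rho \lra V^{\sst}(G,\theta)/\!\!/G.
\]
By Theorem~\ref{t:stab_subgroup}\,\eqref{t:stab_subgroup-2} we have $V_\rho^{\st}(G_\rho,\theta) = V_\rho \cap V^{\st}(G,\theta) \subseteq V^{\st}(G,\theta)$, so the restriction of $\bar\imath_\rho$ to the open stable locus recovers $i_\rho$.

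Then I would show $\bar\imath_\rho$ is proper. All quotient morphisms in play are induced by the one linear inclusion $V_\rho \hookrightarrow V$, so there is a commutative square
\[
	\begin{tikzcd}
		V_\rho^{\sst}(G_\rho,\theta)/\!\!/G_\rho \arrow{r}{\bar\imath_\rho} \arrow{d}{p} & V^{\sst}(G,\theta)/\!\!/G \arrow{d}{q} \\
		V_\rho/\!\!/G_\rho \arrow{r}{} & V/\!\!/G
	\end{tikzcd}
\]
whose bottom arrow is the finite morphism of Proposition~\ref{p:finiteness}, and whose vertical maps $p$ and $q$ are the canonical morphisms from each semistable quotient onto its affine quotient; each of $p$ and $q$ is $\Proj$ of a finitely generated graded ring over the spectrum of its degree-zero part, hence projective. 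Consequently $q \circ \bar\imath_\rho$ is the composite of a projective with a finite morphism, so it is proper; since $q$ is separated, the cancellation property of proper morphisms (if $g \circ f$ is proper and $g$ is separated, then $f$ is proper) forces $\bar\imath_\rho$ to be proper.

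Finally I would realize $i_\rho$ as a base change of $\bar\imath_\rho$. The stable locus is saturated, i.e.\ $\pi^{-1}(V^{\st}(G,\theta)/G) = V^{\st}(G,\theta)$: a fibre of the good quotient contains a unique closed orbit, while a stable orbit has the maximal possible dimension $\dim G$, so a fibre meeting the stable locus can contain no other orbit; the analogous statement holds for $G_\rho$. Combined with Theorem~\ref{t:stab_subgroup}\,\eqref{t:stab_subgroup-2}, this gives $\bar\imath_\rho^{-1}(V^{\st}(G,\theta)/G) = V_\rho^{\st}(G_\rho,\theta)/G_\rho$, so $i_\rho$ is the pullback of $\bar\imath_\rho$ along the open immersion $V^{\st}(G,\theta)/G \hookrightarrow V^{\sst}(G,\theta)/\!\!/G$; since properness is stable under base change, $i_\rho$ is proper.

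I expect the main obstacle to be bookkeeping rather than any substantial new input: one has to check carefully that the square above commutes (tracing the four morphisms back to the single inclusion $\C[V]^G \hookrightarrow \C[V_\rho]^{G_\rho}$ of rings of invariants and identifying each semistable quotient with the relative $\Proj$ over its affine quotient), and to pin down the saturation statement precisely, so that $i_\rho$ is genuinely a base change of $\bar\imath_\rho$ and not merely its restriction to a locally closed subset.
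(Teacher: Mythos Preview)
Your proposal is correct and follows essentially the same route as the paper: the same commutative square, Proposition~\ref{p:finiteness} for the bottom arrow, projectivity of the vertical maps, and then base change along the stable locus via Theorem~\ref{t:stab_subgroup}. The only difference is that you spell out the cancellation argument and the saturation of the stable locus, whereas the paper simply asserts that $i_\rho$ arises as a base change of the semistable map.
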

	
	\begin{proof}
		Consider the commutative diagram
		\[\begin{tikzcd}
			{V_\rho^{\sst}\left(G_\rho,\theta\right)/\!\!/G_\rho} & {V^{\sst}(G,\theta)/\!\!/G} \\
			{V_\rho/\!\!/G_\rho} & {V/\!\!/G\rlap{.}}
			\arrow[from=2-1, to=2-2]
			\arrow[from=1-1, to=1-2]
			\arrow[from=1-1, to=2-1]
			\arrow[from=1-2, to=2-2]
		\end{tikzcd}\]
		Both vertical morphisms are projective, in particular proper. The bottom map is proper by Proposition~\ref{p:finiteness}.
		Therefore, $\smash{V_\rho^{\sst}(G_\rho,\theta)/\!\!/G_\rho} \to V^{\sst}(G,\theta)/\!\!/G$ is proper. As, by Theorem~\ref{t:stab_subgroup}, $i_\rho$ arises as the base change of this morphism, $i_\rho$ is also proper. 
	\end{proof} 
	
	\begin{lem}
		The map $i_\rho$ is injective on closed points.
	\end{lem}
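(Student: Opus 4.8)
The plan is to reduce injectivity on closed points to the freeness of the $G$-action on the stable locus. A closed point of $V_\rho^{\st}(G_\rho,\theta)/G_\rho$ is a $G_\rho$-orbit $G_\rho\cdot v$ with $v \in V_\rho^{\st}(G_\rho,\theta)$, and by the second part of Theorem~\ref{t:stab_subgroup} we may regard $v$ as an element of $V_\rho \cap V^{\st}(G,\theta)$; under this identification the map $i_\rho$ sends $G_\rho\cdot v$ to the $G$-orbit $G\cdot v$. So I would start from two points $v, v' \in V_\rho \cap V^{\st}(G,\theta)$ lying in the same $G$-orbit, say $v' = gv$ for some $g \in G$, and aim to produce an element of $G_\rho$ carrying $v$ to $v'$; in fact I claim that $g$ itself already lies in $G_\rho$.

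To see this, fix $t \in \TT$ and compare the two $\TT$-actions on the points in question. Since $v, v' \in V_\rho$, we have $t.v = \rho(t)v$ and $t.v' = \rho(t)v'$; since the $G$- and $\TT$-actions commute, $t.v' = t.(gv) = g(t.v) = g\rho(t)v$. Combining these, $g\rho(t)v = \rho(t)v' = \rho(t)gv$, and hence the element $g^{-1}\rho(t)^{-1}g\rho(t)$ fixes $v$. As $v \in V^{\st}(G,\theta)$ and $G$ acts freely on the stable locus (Assumption~\ref{a:general_assumption}), this forces $g^{-1}\rho(t)^{-1}g\rho(t) = e$, i.e.\ $g\rho(t) = \rho(t)g$. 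Since $t$ was arbitrary, $g$ centralizes $\im(\rho)$, so $g \in G_\rho$, and therefore $G_\rho\cdot v = G_\rho\cdot v'$, which is exactly injectivity of $i_\rho$ on closed points.

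I do not expect a genuine obstacle in this argument; the only point requiring care is the bookkeeping of the identification $V_\rho^{\st}(G_\rho,\theta) = V_\rho \cap V^{\st}(G,\theta)$ supplied by Theorem~\ref{t:stab_subgroup}, which is precisely what allows one to treat a $\theta$-stable point for the pair $(G_\rho, V_\rho)$ simultaneously as a $\theta$-stable point for $(G,V)$ and thereby invoke the freeness hypothesis. (It is worth noting that this is the one place, together with Lemma~\ref{l:morphism}, where Assumption~\ref{a:general_assumption} is essential: without it, $g^{-1}\rho(t)^{-1}g\rho(t)$ would only be constrained to lie in the stabilizer of $v$.)
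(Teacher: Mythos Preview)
Your proof is correct and follows essentially the same approach as the paper: both arguments take $v' = gv$ with $v,v' \in V_\rho \cap V^{\st}(G,\theta)$, compute that the commutator of $g$ and $\rho(t)$ stabilizes $v$ using $v,v' \in V_\rho$ and the commutativity of the two actions, and then invoke Assumption~\ref{a:general_assumption} together with Theorem~\ref{t:stab_subgroup} to conclude $g \in G_\rho$.
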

	
	\begin{proof}
		Let $v_1,v_2 \in V_\rho^{\st}(G_\rho,\theta)$ be such that $i_\rho(v_1) = i_\rho(v_2)$. Then there exists a $g \in G$ such that $gv_1 = v_2$. We need to show that $g \in G_\rho$, so we take $t \in \mathcal{T}$ and show $g$ commutes with $\rho(t)$. We see that 
		\[
			\rho(t)gv_1 = \rho(t)v_2 = t.v_2 = t.gv_1 = gt.v_1 = g\rho(t)v_1.
		\]
		We have used that $v_1$ and $v_2$ belong to $V_\rho$. We deduce that the commutator of $g$ and $\rho(t)$ lies in the stabilizer $\operatorname{Stab}_G(v_1)$. As $v_1$ lies in $V^{\st}(G,\theta)$ by Theorem~\ref{t:stab_subgroup} and as stable points have trivial stabilizers by Assumption~\ref{a:general_assumption}, we obtain the desired commutativity.
	\end{proof}
	
	We now consider the tangent spaces. The tangent space of a $G$-principal bundle quotient $Y = X/G$ at a point $y = \pi(x)$ may be identified as
	\[
		T_yY \cong T_xX/\im(d_e \act_G(\blank,x)), 
	\]
	where $\act_G\colon  G \times X \to X$ is the action map and $d_e\act_G(\blank,x)\colon  \mathfrak{g} \to T_xX$ is the derivative at $e$ of the map $G \to X,\ g \mapsto gx$. Here, $X = V^{\st}(G,\theta)$ is open in a vector space, so $T_xX \cong V$. As the action is linear, $V$ becomes a $\mathfrak{g}$-module via the derivative $d_e\sigma\colon  \mathfrak{g} \to \mathfrak{gl}(V)$. We see that
	\[
		((d_e\sigma)(\xi))(v) = (d_e\act_G(\blank,v))(\xi)
	\]
	for all $\xi \in \mathfrak{g}$ and $v \in V$. We write $\xi\cdot v$ for $((d_e\sigma)(\xi))(v)$. For $g \in G$, $\xi \in \mathfrak{g}$, and $v \in V$,  $g(\xi\cdot v) = \operatorname{Ad}_g(\xi)\cdot gv$ holds; see for instance \cite[Section~II.2.3]{Kraft:84}.

	\begin{lem}
		Let $v \in V_\rho^{\st}(G_\rho,\theta)$. Then the induced map 
		\[
			d_{\pi_\rho(v)}i_\rho\colon  T_{\pi_\rho(v)}(V_\rho^{\st}(G_\rho,\theta)/G_\rho) \lra T_{\pi(v)}(V^{\st}(G,\theta)/G)
		\]
		is injective.
	\end{lem}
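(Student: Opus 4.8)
The plan is to reduce the statement to an equality of two linear subspaces of $V$ and then settle that equality by decomposing into $\TT$-weight spaces. First I would make both tangent spaces explicit. By Theorem~\ref{t:stab_subgroup} we have $V_\rho^{\st}(G_\rho,\theta)=V_\rho\cap V^{\st}(G,\theta)$, and on this set $G_\rho\subseteq G$ inherits a free action from Assumption~\ref{a:general_assumption}, so $\pi_\rho$ is again an \'etale-locally trivial principal $G_\rho$-bundle and the description of tangent spaces recalled before the lemma applies on both sides:
\[
  T_{\pi(v)}\bigl(V^{\st}(G,\theta)/G\bigr)\cong V/(\mathfrak{g}\cdot v),\qquad T_{\pi_\rho(v)}\bigl(V_\rho^{\st}(G_\rho,\theta)/G_\rho\bigr)\cong V_\rho/(\mathfrak{g}_\rho\cdot v),
\]
where $\mathfrak{g}_\rho=\operatorname{Lie}(G_\rho)$ and $\mathfrak{g}\cdot v$, $\mathfrak{g}_\rho\cdot v$ denote the images of $\mathfrak{g}$ and $\mathfrak{g}_\rho$ under $\xi\mapsto\xi\cdot v$. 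Since $i_\rho$ is induced by the inclusion $V_\rho\hookrightarrow V$, under these identifications $d_{\pi_\rho(v)}i_\rho$ is the map $V_\rho/(\mathfrak{g}_\rho\cdot v)\to V/(\mathfrak{g}\cdot v)$ coming from $V_\rho\hookrightarrow V$; it is well defined because $\mathfrak{g}_\rho\cdot v\subseteq\mathfrak{g}\cdot v$, and it is injective if and only if $V_\rho\cap(\mathfrak{g}\cdot v)=\mathfrak{g}_\rho\cdot v$. The inclusion $\supseteq$ is immediate (the left side lies in $\mathfrak{g}\cdot v$ trivially and in $V_\rho$ by $G_\rho$-invariance of $V_\rho$), so the content is the inclusion $\subseteq$.

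To prove it I would equip $V$ with the \emph{twisted} $\TT$-action $t\star w:=\rho(t)^{-1}(t.w)$. Because the $G$- and $\TT$-actions on $V$ commute, this is a genuine algebraic action, and its fixed-point space is exactly $V_\rho$. Using the identity $g(\xi\cdot w)=\operatorname{Ad}_g(\xi)\cdot(gw)$, the fact that each operator $\xi\cdot(-)$ commutes with the $\TT$-action on $V$ (a consequence of the commuting actions), and $v\in V_\rho$, one checks that the linear map $\phi\colon\mathfrak{g}\to V$, $\xi\mapsto\xi\cdot v$, is $\TT$-equivariant if $\mathfrak{g}$ carries the action $t\cdot\xi:=\operatorname{Ad}_{\rho(t)^{-1}}(\xi)$ and $V$ carries $\star$; in particular $\mathfrak{g}\cdot v=\im(\phi)$ is $\star$-stable. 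With the original $\TT$-action on $V$ this would fail, since $\mathfrak{g}\cdot v=T_v(Gv)$ is not invariant because $t.v\neq v$ in general; this is precisely why the twist is needed.

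Now I would decompose $V$ and $\mathfrak{g}$ into weight spaces for $\star$ and for $\operatorname{Ad}\circ\rho$, respectively. The weight-zero space of $\star$ on $V$ is $V_\rho$, while the weight-zero space of $\operatorname{Ad}\circ\rho$ on $\mathfrak{g}$ is $\mathfrak{g}^{\operatorname{Ad}\circ\rho}=\operatorname{Lie}(G_\rho)=\mathfrak{g}_\rho$: here characteristic zero enters, since the torus $\im(\rho)$ acts semisimply on $\mathfrak{g}$, so its fixed subalgebra is the Lie algebra of its centralizer. A $\TT$-equivariant linear map sends the weight-zero part of its image onto the image of the weight-zero part of its source; applying this to $\phi$ and using that $\mathfrak{g}\cdot v=\im(\phi)$ is $\star$-stable gives
\[
  V_\rho\cap(\mathfrak{g}\cdot v)=(\mathfrak{g}\cdot v)_0=\phi(\mathfrak{g}_\rho)=\mathfrak{g}_\rho\cdot v,
\]
where $(\,\cdot\,)_0$ is the weight-zero component. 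This is the desired equality, hence $d_{\pi_\rho(v)}i_\rho$ is injective.

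The tangent-space identifications and the two verifications in the twisting step are short manipulations with $g(\xi\cdot w)=\operatorname{Ad}_g(\xi)\cdot(gw)$ and with the commutation of the two actions. The one point that requires genuine care --- and the only real idea in the argument --- is the choice of twist: one must replace the given $\TT$-action on $V$ (under which $\mathfrak{g}\cdot v$ is \emph{not} invariant) by $t\star w=\rho(t)^{-1}(t.w)$, for which $V_\rho$ becomes the fixed locus and $\mathfrak{g}\cdot v$ the image of an equivariant map, and correspondingly twist $\mathfrak{g}$ by $\operatorname{Ad}\circ\rho^{-1}$; after that, the weight-space bookkeeping is automatic.
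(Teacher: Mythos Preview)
Your proof is correct and takes a genuinely different route from the paper's. The paper sets up a snake-lemma diagram with rows $0\to\mathfrak{g}_\rho\to V_\rho\to T_xY_\rho\to 0$ and $0\to\mathfrak{g}\to V\to T_yY\to 0$, reduces to showing that $\mathfrak{g}/\mathfrak{g}_\rho\to V/V_\rho$ is injective, and then proves this element-wise: given $\xi\in\mathfrak{g}$ with $\xi\cdot v\in V_\rho$, a direct computation gives $(\operatorname{Ad}_{\rho(t)}(\xi)-\xi)\cdot(\rho(t)v)=0$, and then the freeness assumption (Assumption~\ref{a:general_assumption}) forces $\operatorname{Ad}_{\rho(t)}(\xi)=\xi$, hence $\xi\in\mathfrak{g}_\rho$. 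Your argument bypasses the snake lemma, going straight to the equivalent condition $V_\rho\cap(\mathfrak{g}\cdot v)=\mathfrak{g}_\rho\cdot v$, and proves it structurally by exhibiting $\phi\colon\xi\mapsto\xi\cdot v$ as a $\TT$-equivariant map for the twisted actions; the equality then drops out of the weight decomposition. What your approach buys is that the final step no longer invokes freeness---the identity $(\mathfrak{g}\cdot v)_0=\phi(\mathfrak{g}_\rho)$ is pure representation theory---whereas the paper's element-wise argument uses triviality of stabilizers in an essential way at the end. The paper's version, on the other hand, proves the marginally stronger statement that $\phi^{-1}(V_\rho)=\mathfrak{g}_\rho$ (not just $\phi(\mathfrak{g})\cap V_\rho=\phi(\mathfrak{g}_\rho)$), though under the standing freeness hypothesis the two are equivalent.
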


	\begin{proof}
		Abbreviate $Y := V^{\st}(G,\theta)/G$ and $Y_\rho := V_\rho^{\st}(G_\rho,\theta)/G_\rho$. Let $x := \pi_\rho(v)$ and $y := \pi(v) = i_\rho(x)$.
		We have already argued that
		\begin{align*}
			T_yY &\cong V/\im(d_e\act_G(\blank,v)), \\
			T_xY_\rho &\cong V_\rho/\im(d_e\act_{G_\rho}(\blank,v)).
		\end{align*}
		Let $K$ be the kernel of $d_xi_\rho$. We have the following commutative diagram with exact rows and columns:
		\[
		\begin{tikzcd}
			  &   &[3em]   & 0 \arrow{d}{} & \\
			  & 0 \arrow{d}{} & 0 \arrow{d}{} & K \arrow{d}{} & \\
			0 \arrow{r}{} & \mathfrak{g_\rho} \arrow{r}{d_e\act_{G_\rho}(\blank,v)} \arrow{d}{} & V_\rho \arrow{d}{} \arrow{r}{} & T_xY_\rho \arrow{d}{d_xi_\rho} \arrow{r}{} & 0 \\
			0 \arrow{r}{} & \mathfrak{g} \arrow{d}{} \arrow{r}{d_e\act_G(\blank,v)} & V \arrow{d}{} \arrow{r}{} & T_yY  \arrow{r}{} & 0 \\
			& \mathfrak{g}/\mathfrak{g}_\rho \arrow{d}{} \arrow{r}{} & V/V_\rho \arrow{d}{} & & \\
			& 0 & 0 \rlap{.}& &
		\end{tikzcd}
		\]
		The snake lemma then tells us that there exists a linear map $K \to \mathfrak{g}/\mathfrak{g}_\rho$ such that the sequence 
		\[
			0 \lra K \lra \mathfrak{g}/\mathfrak{g}_\rho \lra V/V_\rho
		\]
		is exact. In order to show that $K = 0$, it suffices to check that $\mathfrak{g}/\mathfrak{g}_\rho \to V/V_\rho$ is injective.

		The Lie algebra $\mathfrak{g}_\rho$ is given as follows. Recall that $G_\rho$ is the centralizer of the image of $\rho$. Therefore,
		\[
			\mathfrak{g}_\rho = \{ \xi \in \mathfrak{g} \mid \operatorname{Ad}_{\rho(t)}(\xi) = \xi \text{ for all } t \in \TT \}.
		\]

		Now let $\xi \in \mathfrak{g}$, and suppose that $\xi \cdot v \in V_\rho$.
                We show that this implies $\xi \in \mathfrak{g}_\rho$, which concludes the proof. To this end, let $t \in \TT$, and consider $\operatorname{Ad}_{\rho(t)}(\xi)$. We apply this element to $\rho(t)v$ and obtain
		\begin{align*}
			\operatorname{Ad}_{\rho(t)}(\xi)\cdot \rho(t)v 
			&= \rho(t)\cdot(\xi\cdot v) \\
			&= t.(\xi\cdot v) \\
			&= \xi \cdot (t.v) \\
			&= \xi \cdot (\rho(t)v)
		\end{align*}
		as both $v$ and $\xi\cdot v$ belong to $V_\rho$. This shows that $\operatorname{Ad}_{\rho(t)}(\xi)-\xi$ lies in the Lie algebra of the stabilizer $\operatorname{Stab}_G(\rho(t)v)$. By the stability of $\rho(t)v$ and Assumption~\ref{a:general_assumption}, the stabilizer is trivial. Hence we may conclude that $\operatorname{Ad}_{\rho(t)}(\xi) = \xi$, which finishes the proof.
	\end{proof}

	\begin{proof}[Proof of Theorem~\ref{t:closed_immersion}]
		We have established the three requirements of Proposition~\ref{p:osserman}. This proves Theorem~\ref{t:closed_immersion}.
	\end{proof}

	\section{Analysis of the fixed point locus} \label{s:fixedPointLocus}

	In this section, we will prove the main result of the paper, a description of the torus fixed point locus of the stable quotient.
	First, we are concerned with the index set of the decomposition of the fixed point locus into irreducible components.
	
	\begin{lem} \label{l:disjoint}
		Let $\rho_1, \rho_2\colon  \TT \to G$ be morphisms such that $F_{\rho_1} \cap F_{\rho_2} \neq \emptyset$. Then $\rho_1$ and $\rho_2$ are $G$-conjugate, and $F_{\rho_1} = F_{\rho_2}$.
	\end{lem}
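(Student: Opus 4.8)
The statement should follow directly from the fact that $\pi$ restricts to a geometric quotient on the stable locus together with the triviality of stabilizers guaranteed by Assumption~\ref{a:general_assumption}. First I would pick a point $y \in F_{\rho_1} \cap F_{\rho_2}$. By definition of $F_{\rho_i} = \pi(V_{\rho_i} \cap V^{\st}(G,\theta))$, there are lifts $x_1 \in V_{\rho_1} \cap V^{\st}(G,\theta)$ and $x_2 \in V_{\rho_2} \cap V^{\st}(G,\theta)$ with $\pi(x_1) = \pi(x_2) = y$. Since $\pi$ is a geometric quotient on $V^{\st}(G,\theta)$, the fibres are single $G$-orbits, so there exists $g \in G$ with $g x_1 = x_2$.

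Next I would show $\rho_2 = g\rho_1 g^{-1}$. Fix $t \in \TT$. Using that the $G$- and $\TT\!$-actions commute and that $x_1 \in V_{\rho_1}$, $x_2 \in V_{\rho_2}$, we compute
\[
	g\rho_1(t)g^{-1} x_2 = g\rho_1(t) x_1 = g(t.x_1) = t.(gx_1) = t.x_2 = \rho_2(t) x_2.
\]
Hence $\bigl(g\rho_1(t)g^{-1}\bigr)^{-1}\rho_2(t)$ fixes $x_2$, so it lies in $\operatorname{Stab}_G(x_2)$, which is trivial because $x_2 \in V^{\st}(G,\theta)$ and $G$ acts freely there (Assumption~\ref{a:general_assumption}). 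Therefore $\rho_2(t) = g\rho_1(t)g^{-1}$ for all $t \in \TT$, i.e.\ $\rho_1$ and $\rho_2$ are $G$-conjugate via $g$.

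Finally, from $\rho_2 = g\rho_1 g^{-1}$ I would deduce $V_{\rho_2} = gV_{\rho_1}$: for $v \in V$ and all $t \in \TT$, using commutativity of the two actions, $t.v = \rho_2(t)v = g\rho_1(t)g^{-1}v$ is equivalent to $t.(g^{-1}v) = \rho_1(t)(g^{-1}v)$, so $v \in V_{\rho_2} \iff g^{-1}v \in V_{\rho_1}$. Since $V^{\st}(G,\theta)$ is $G$-invariant, $g\bigl(V_{\rho_1}\cap V^{\st}(G,\theta)\bigr) = V_{\rho_2}\cap V^{\st}(G,\theta)$, and applying $\pi$ (which is $G$-invariant) yields $F_{\rho_1} = F_{\rho_2}$.

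I do not expect a serious obstacle here; the only point that genuinely needs care is invoking Assumption~\ref{a:general_assumption} at the right place, since it is precisely the freeness of the $G$-action on the stable locus that forces the single conjugating element $g$ to conjugate the whole homomorphism $\rho_1$ to $\rho_2$ rather than just intertwining the two actions at the chosen lifts.
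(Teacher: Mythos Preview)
Your proof is correct and follows essentially the same approach as the paper: pick lifts $x_1,x_2$ of a common point, use that $\pi$ is a geometric quotient to get $x_2 = gx_1$, compare the two computations of $t.x_2$, and invoke the freeness of the $G$-action on the stable locus (Assumption~\ref{a:general_assumption}) to conclude $\rho_2 = g\rho_1 g^{-1}$. The only difference is cosmetic: the paper deduces $F_{\rho_1}=F_{\rho_2}$ from conjugacy in one sentence (``immediate''), whereas you spell out $V_{\rho_2}=gV_{\rho_1}$ explicitly, which is a nice addition.
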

	
	\begin{proof}
	  First of all, if $\rho_1$ and $\rho_2$ are $G$-conjugate, then it is immediate that $F_{\rho_1}$ and $F_{\rho_2}$ agree. 
          
		Concerning the other assertion, let $y \in F_{\rho_1} \cap F_{\rho_2}$. Then there exist $x_i \in V_{\rho_i} \cap V^{\st}(G,\theta)$ such that $\pi(x_i) = y$ for $i=1,2$. As $\pi$ is a geometric quotient, we find a $g \in G$ such that $x_2 = gx_1$. Then
		\[
			t.x_2 = \rho_2(t)x_2 = \rho_2(t)gx_1
		\]
		and on the other hand
		\[
			t.x_2 = t.gx_1 = gt.x_1 = g\rho_1(t)x_1. 
		\]
		This implies that $g^{-1}\rho_2(t)gx_1 = \rho_1(t)x_1$, and the stability of $x_1$ implies that $\rho_1(t) = g^{-1}\rho_2(t)g$ as the $G$-action on the stable locus is free.
	\end{proof}

	We fix a maximal torus $T \sub G$. Let $W = N_G(T)/T$ be the corresponding Weyl group.
	Now observe that for any $g \in G$, we get $F_{\rho_x} = F_{\rho_{gx}}$ and $\rho_{gx} = g\rho_xg^{-1}$. The image of the morphism $\rho_x$ is a torus; it is therefore contained in a maximal torus of $G$. As far as the sets $F_{\rho_x}$ are concerned, we may hence without loss of generality assume the image of $\rho_x$ to be contained in the fixed maximal torus $T \sub G$---for if not, we may replace $x$ with $gx$ for a suitable $g$ as all maximal tori are conjugate. By a slight abuse of notation, we also denote the morphism $\TT \to T$ by $\rho$. 
	
	\begin{lem} \label{l:conj}
		Let $\rho_1, \rho_2\colon  \TT \to T$ be morphisms. Assume that there exists an $h \in G$ such that $\rho_2 = h\rho_1h^{-1}$. Then there exists a $g \in N_G(T)$ such that $\rho_2 = g\rho_1g^{-1}$.
	\end{lem}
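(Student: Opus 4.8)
The plan is to use the classical fact that the centralizer of a torus in a connected reductive group is again connected reductive, so that its maximal tori form a single conjugacy class; the element conjugating the two copies of $T$ will, after a harmless modification, provide the desired $g \in N_G(T)$.

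First I would record that $\im(\rho_i)$ is a subtorus of $T$: it is the image of the torus $\TT$ under a morphism of algebraic groups, hence a closed connected subgroup of the diagonalizable group $T$, and a connected diagonalizable group is a torus. Therefore the centralizer $G_{\rho_i} = Z_G(\im\rho_i)$ is a connected reductive subgroup of $G$. Since $T$ is abelian and contains $\im(\rho_i)$, we have $T \sub G_{\rho_i}$; as $T$ is a maximal torus of $G$, it is in particular a maximal torus of $G_{\rho_i}$.

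Now, from $\rho_2 = h\rho_1 h^{-1}$ we obtain $\im(\rho_2) = h\,\im(\rho_1)\,h^{-1}$, and hence $G_{\rho_2} = Z_G(\im\rho_2) = h\,Z_G(\im\rho_1)\,h^{-1} = h\,G_{\rho_1}\,h^{-1}$. Thus $hTh^{-1}$ is a maximal torus of $hG_{\rho_1}h^{-1} = G_{\rho_2}$, and so is $T$. Because $G_{\rho_2}$ is connected, its maximal tori are all conjugate, so there is a $k \in G_{\rho_2}$ with $k(hTh^{-1})k^{-1} = T$. Put $g := kh$. Then $gTg^{-1} = T$, i.e.\ $g \in N_G(T)$, and
\[
	g\rho_1 g^{-1} = k(h\rho_1 h^{-1})k^{-1} = k\rho_2 k^{-1} = \rho_2,
\]
where the final equality holds because $k \in G_{\rho_2} = Z_G(\im\rho_2)$ commutes with $\rho_2(t)$ for every $t \in \TT$. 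This is exactly the assertion.

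The only non-formal ingredient is the connectedness of the centralizer of a torus in a connected group (equivalently, the conjugacy of maximal tori of $G_{\rho_2}$), which is standard; the rest is bookkeeping, so I do not expect a genuine obstacle.
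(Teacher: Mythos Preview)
Your argument is correct, and it is genuinely different from the paper's proof. The paper proceeds pointwise: it invokes a result of Carter (for groups with a BN pair) saying that two $G$-conjugate elements of $T$ are already $N_G(T)$-conjugate, applies this to each $\rho_1(t)$ and $\rho_2(t)$ separately, and then has to argue that the resulting $g_t \in N_G(T)$ can be chosen independently of $t$. This last step is handled by restricting to the open dense set $U \subseteq \TT$ where $C_G(\rho_1(t)) = G_{\rho_1}$, observing that the induced map $U \to N_G(T)/(N_G(T)\cap G_{\rho_1})$ is locally constant, and using irreducibility of $U$ to conclude it is constant.

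Your approach bypasses the pointwise step and the density argument entirely. By working with the centralizer $G_{\rho_2}$ of the whole image torus from the start, you exploit its connectedness to conjugate $hTh^{-1}$ back to $T$ by some $k \in G_{\rho_2}$; since $k$ centralizes $\im\rho_2$ (not merely normalizes it), the equality $k\rho_2 k^{-1} = \rho_2$ is automatic and no limiting argument is needed. The only structural input is that the centralizer of a torus in a connected linear algebraic group is connected, which is standard and in fact weaker than the BN-pair result the paper quotes. Your route is shorter and more conceptual; the paper's route has the minor advantage of making explicit contact with the Weyl group element-by-element, but that is not used elsewhere.
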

	
	\begin{proof}
		Let $s_1,s_2 \in T$, and suppose they are conjugate. Then, by \cite[Proposition~3.7.1]{Carter:93}, there exists a $g \in N_G(T)$ such that $s_2 = gs_1g^{-1}$. The book~\cite{Carter:93} deals with finite groups of Lie type, but the proposition holds true for every algebraic group which has a BN pair.
		
		Hence for every $t \in \TT$, there exists a $g_t \in N_G(T)$ such that $\rho_2(t) = g_t\rho_1(t)g_t^{-1}$. The element $g_t$ is unique up to $N_G(T) \cap C_G(\rho_1(t))$. Let $U = \{ t \in \TT \mid C_G(\rho_1(t)) = C_G(\rho_1) \}$. This is an open dense subset of $\TT$. Recall that $C_G(\rho_1) = G_{\rho_1}$. We obtain a map $U \to N_G(T)/(N_G(T) \cap G_{\rho_1})$. This map is locally constant, and the irreducibility of $U$ forces it to be constant, say of value $w$. We choose a lift $g$ of $w$ to $N_G(T)$. Then $\rho_2(t) = g\rho_1(t)g^{-1}$ for all $t$ in the dense subset $U$. This proves the lemma.
	\end{proof}

	We now state our main result, the description of the components of the fixed point locus of $V^{\st}(G,\theta)/G$ under the action of $\TT$.
	
	\begin{thm} \label{t:main}
		The $\TT\!$-fixed point locus $(V^{\st}(G,\theta)/G)^{\TT}\!$ decomposes into connected components $\smash{\bigsqcup_{\rho}} \smash{F_\rho}$ indexed by a full set of representatives of morphisms of tori $\rho\colon  \TT \to T$ up to conjugation with $W$. The component $F_\rho$ equals $\pi(V_\rho \cap V^{\st}(G,\theta))$, it is irreducible, and it is isomorphic to 
		\[
			V_\rho^{\st}(G_\rho,\theta)/G_\rho.
		\]
	\end{thm}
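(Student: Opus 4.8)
The strategy is to assemble the pieces already developed in Sections~\ref{s:lifts}--\ref{s:embedding} and use the basic fact that the $\TT$-fixed locus of a scheme is closed. First I would observe that every $\TT$-fixed point $y$ of $V^{\st}(G,\theta)/G$ admits, by Lemma~\ref{l:morphism} and Lemma~\ref{lem43}, a lift $x\in V^{\st}(G,\theta)$ and an associated morphism $\rho=\rho_x\colon\TT\to G$, with $x\in V_\rho\cap V^{\st}(G,\theta)$ and hence $y\in F_\rho$. Conjugating $x$ by a suitable $g\in G$ (which does not change $F_{\rho_x}$, since $F_{\rho_{gx}}=F_{\rho_x}$ and $\rho_{gx}=g\rho_xg^{-1}$), we may take $\im\rho\sub T$. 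Therefore
\[
  (V^{\st}(G,\theta)/G)^{\TT} = \bigcup_{\rho\colon\TT\to T} F_\rho,
\]
the union over all morphisms of tori into the fixed maximal torus $T$. By Lemma~\ref{l:disjoint} together with Lemma~\ref{l:conj}, two such sets $F_{\rho_1}$, $F_{\rho_2}$ either are disjoint or coincide, and they coincide exactly when $\rho_1$ and $\rho_2$ are $W$-conjugate; so after choosing one representative $\rho$ per $W$-orbit, the union becomes a disjoint union $\bigsqcup_\rho F_\rho$.

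Next I would identify each $F_\rho$ with the quotient $V_\rho^{\st}(G_\rho,\theta)/G_\rho$. Theorem~\ref{t:stab_subgroup}\eqref{t:stab_subgroup-2} gives $V_\rho\cap V^{\st}(G,\theta)=V_\rho^{\st}(G_\rho,\theta)$, so $F_\rho=\pi(V_\rho\cap V^{\st}(G,\theta))$ is precisely the image of $i_\rho$; and by Theorem~\ref{t:closed_immersion}, $i_\rho$ is a closed immersion, so $F_\rho$ is a closed subset of $V^{\st}(G,\theta)/G$ isomorphic to $V_\rho^{\st}(G_\rho,\theta)/G_\rho$. Irreducibility of $F_\rho$ follows because $V_\rho$ is a linear subspace (Lemma~\ref{lem43}), hence $V_\rho^{\st}(G_\rho,\theta)$ is an open subset of an irreducible variety and thus irreducible (when non-empty), and its image under the quotient map is again irreducible; consequently $F_\rho$ is irreducible.

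Finally I would promote the decomposition into closed irreducible pieces to a decomposition into connected components. Each $F_\rho$ is closed in $(V^{\st}(G,\theta)/G)^{\TT}$, and since there are only finitely many $\rho$ with $F_\rho\neq\emptyset$ (the content of Section~\ref{s:finiteness}), the complement of any one $F_\rho$ inside the fixed locus is the finite union of the others, hence also closed; thus each $F_\rho$ is open and closed in $(V^{\st}(G,\theta)/G)^{\TT}$, i.e.\ a union of connected components, and being irreducible it is connected, hence a single connected component. This gives the asserted decomposition into connected components indexed by $W$-conjugacy classes of morphisms $\rho\colon\TT\to T$. The main obstacle is not in this final bookkeeping but in the two auxiliary inputs---Theorem~\ref{t:stab_subgroup}\eqref{t:stab_subgroup-2}, whose proof requires producing a $\TT$-fixed point in the Kempf-type variety $Y_{[\lambda]}(v)$, and Theorem~\ref{t:closed_immersion}, whose properness relies on Luna's finiteness result; here those are already available, so the remaining work is the routine but careful assembly above, the one genuinely substantive point being to invoke the finiteness of the index set (Section~\ref{s:finiteness}) to upgrade ``closed and irreducible'' to ``connected component.''
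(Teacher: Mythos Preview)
Your proposal is correct and follows essentially the same route as the paper's proof: cover the fixed locus by the $F_\rho$, use Lemmas~\ref{l:disjoint} and~\ref{l:conj} to reduce to $W$-orbits of morphisms $\rho\colon\TT\to T$, and then invoke Theorems~\ref{t:stab_subgroup} and~\ref{t:closed_immersion} to identify each $F_\rho$ with the closed irreducible subvariety $V_\rho^{\st}(G_\rho,\theta)/G_\rho$. The one point on which you are more explicit than the paper is the passage from ``disjoint closed irreducible pieces'' to ``connected components'': the paper's proof simply records closedness, disjointness, and irreducibility, whereas you appeal to the finiteness established in Section~\ref{s:finiteness} to conclude openness as well; this forward reference is harmless (Proposition~\ref{p:finite} does not rely on Theorem~\ref{t:main}), and it does fill a step the paper leaves to the reader.
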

	
	\begin{proof}
		We have seen that every fixed point is contained in some subset $F_\rho$ which was defined as \mbox{$\pi(V_\rho \cap V^{\st}(G,\theta))$.} By Lemma~\ref{l:disjoint}, different conjugacy classes yield disjoint $F_\rho$. It suffices to consider morphisms $\rho\colon \TT \to T$ up to Weyl group action by Lemma~\ref{l:conj} and the reasoning before the lemma. By Theorem~\ref{t:stab_subgroup}, $F_\rho = \pi(\smash{V_\rho^{\st}(G_\rho,\theta)})$, and by Theorem~\ref{t:closed_immersion}, the morphism $i_\rho$ provides an isomorphism from $\smash{V_\rho^{\st}(G_\rho,\theta)/G_\rho}$ onto its image, which is $F_\rho$, and which is closed. A variety of the form $V_\rho^{\st}(G_\rho,\theta)/G_\rho$ is obviously irreducible.
	\end{proof}

	\section{A necessary condition for a non-empty fixed point component} \label{s:finiteness}
	
	The index set of the disjoint union in Theorem~\ref{t:main} is infinite; however, there are just finitely many $\rho$ for which $F_\rho$ is non-empty. It seems hard in this generality to give a necessary and sufficient condition for $F_\rho \neq \emptyset$. Still, it is possible to find a necessary condition which leaves us with just finitely many possibilities.
	
	Recall the notation $N_T(V)$ for the set of weights of the action of $T$ on $V$. We also want to introduce the symbol $\mathcal{N}_T(V)$ for the $\Q$-linear subspace of $X^*(T)_\Q$ which is spanned by $N_T(V)$.
	
	\begin{lem}\label{lem71}
		If there exists a vector $v \in V$ with a finite stabilizer, then $\mathcal{N}_T(V) = X^*(T)_\Q$.
	\end{lem}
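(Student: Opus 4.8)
The plan is to prove the contrapositive: if $\mathcal{N}_T(V) \subsetneq X^*(T)_\Q$, then every vector $v \in V$ has a positive-dimensional stabilizer, hence no vector has finite stabilizer. The key observation is that $\mathcal{N}_T(V) \subsetneq X^*(T)_\Q$ exactly means that the weights of $T$ on $V$ all lie in a proper subspace of $X^*(T)_\Q$, so they all vanish on some nonzero cocharacter direction. More precisely, the pairing between $X^*(T)_\Q$ and $X_*(T)_\Q$ is perfect, so there is a nonzero one-parameter subgroup $\eta \in X_*(T)$ with $\langle \mu, \eta \rangle = 0$ for every $\mu \in N_T(V)$; clearing denominators, we may take $\eta$ to be an actual cocharacter of $T$, not merely a rational one.

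Next I would analyze how such an $\eta$ acts on $V$. Decomposing $V = \bigoplus_{\mu \in N_T(V)} V_\mu(T)$ into $T$-weight spaces, the subtorus $\im(\eta)$ acts on $V_\mu(T)$ through the character $z \mapsto z^{\langle \mu,\eta\rangle} = z^0$, so $\eta(z)$ acts as the identity on each $V_\mu(T)$, and therefore $\eta(z)$ acts trivially on all of $V$. Consequently, for \emph{any} $v \in V$, the entire image $\im(\eta)$ — which is a one-dimensional subtorus of $G$ since $\eta$ is nontrivial — is contained in $\operatorname{Stab}_G(v)$. Hence $\operatorname{Stab}_G(v)$ is infinite (indeed of positive dimension) for every $v$, contradicting the existence of a vector with finite stabilizer.

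The argument is essentially a linear-algebra observation combined with the weight-space decomposition from Remark~\ref{r:normal_form} / the last paragraph of Section~\ref{s:setup}, so I do not anticipate a genuine obstacle. The one point requiring a small amount of care is passing from a rational cocharacter annihilating $N_T(V)$ to an integral one: this is fine because $X_*(T)$ is a full-rank lattice in $X_*(T)_\Q$ and the annihilator of the subspace $\mathcal{N}_T(V)$ is a nonzero $\Q$-subspace, which therefore contains a nonzero lattice point. I would write this as: since $\mathcal{N}_T(V) \neq X^*(T)_\Q$, its annihilator $\{ \eta \in X_*(T)_\Q \mid \langle \mu,\eta\rangle = 0 \text{ for all } \mu \in N_T(V)\}$ is a nonzero subspace of $X_*(T)_\Q$ and hence meets $X_*(T) \setminus \{0\}$; pick such an $\eta$ and run the above.
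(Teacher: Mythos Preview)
Your proof is correct and follows essentially the same approach as the paper: both argue by contrapositive, produce a nontrivial one-parameter subgroup of $T$ annihilating all weights in $N_T(V)$, and conclude that its image acts trivially on $V$ and hence lies in every stabilizer. The only cosmetic difference is that the paper fixes a character $\mu \notin \mathcal{N}_T(V)$ and requires $\langle \mu,\lambda\rangle \neq 0$ to witness nontriviality of $\lambda$, whereas you invoke the perfect pairing directly to see the annihilator is nonzero.
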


	\begin{proof}
		Assume there are $\mu \in X^*(T) - (\mathcal{N}_T(V) \cap X^*(T))$. Let $N_T(V) = \{ \mu_1,\ldots,\mu_r \}$. Then we find a one-parameter subgroup $\lambda\colon \C^\times \to T$ such that $\langle \mu,\lambda \rangle \neq 0$ while $\langle \mu_i,\lambda \rangle = 0$ for $i=1,\ldots,r$. The subgroup $\lambda(\C^\times)$ acts trivially on $V$, and as $\lambda$ is non-trivial, this subgroup $\lambda(\C^\times)$ is infinite.
	\end{proof}

	In particular, we see that $\mathcal{N}_T(V) = X^*(T)_\Q$ is necessary for the existence of stable points for the $G$-action (with respect to any stability parameter $\theta$).
	
	\begin{prop} \label{p:finite}
		\leavevmode
		\begin{enumerate}
			\item Let $\rho\colon  \mathcal{T} \to T$ be a morphism. If\, $F_\rho \neq \emptyset$, then $\mathcal{N}_T(V_\rho) = X^*(T)_\Q$.
			\item The set $\{ \rho\colon  \TT \to T \text{ morphism } \mid \mathcal{N}_T(V_\rho) = X^*(T)_\Q \}$ is finite.
		\end{enumerate}

	\end{prop}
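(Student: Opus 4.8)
The plan is to handle the two parts in turn; neither requires much work, and the only point that calls for a little care is the lattice bookkeeping in part~(2).

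For part~(1), I would argue as follows. If $F_\rho\neq\emptyset$, then $V_\rho\cap V^{\st}(G,\theta)\neq\emptyset$, and by Theorem~\ref{t:stab_subgroup}\eqref{t:stab_subgroup-2} this set equals $V_\rho^{\st}(G_\rho,\theta)$. Choose a point $v$ in it; by the definition of stability its stabilizer in $G_\rho$ is finite. Since $\im\rho\sub T$ and $T$ is abelian, we have $T\sub C_G(\im\rho)=G_\rho$, so $T$ is a maximal torus of $G_\rho$, and the $T$-stabilizer of $v$ is a subgroup of $\operatorname{Stab}_{G_\rho}(v)$ and hence finite. Applying Lemma~\ref{lem71} to the $T$-representation $V_\rho$ (its proof uses only that the ambient space is a $T$-module) yields $\mathcal{N}_T(V_\rho)=X^*(T)_\Q$.

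For part~(2), the key observation is that a morphism of tori $\rho\colon\TT\to T$ is the same datum as the dual homomorphism $\rho^*\colon X^*(T)\to X^*(\TT)$ of character lattices, so in particular $\rho$ is determined by $\rho^*$. By Remark~\ref{r:weights2}, the weight set $N_T(V_\rho)$ is contained in the fixed finite set $\operatorname{pr}_{X^*(T)}\bigl(N_{T\times\TT}(V)\bigr)$, and the restriction of $\rho^*$ to $N_T(V_\rho)$ is recorded by the finite graph $\Gamma_\rho:=\{(\mu,\rho^*(\mu))\mid\mu\in N_T(V_\rho)\}\sub N_{T\times\TT}(V)$. The heart of the argument is to show that $\rho\mapsto\Gamma_\rho$ is injective on the set $S:=\{\rho\colon\TT\to T\mid\mathcal{N}_T(V_\rho)=X^*(T)_\Q\}$. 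Indeed, if $\Gamma_{\rho_1}=\Gamma_{\rho_2}$, then projecting to $X^*(T)$ gives $N_T(V_{\rho_1})=N_T(V_{\rho_2})=:N$, and for $\mu\in N$ the element $\rho_i^*(\mu)$ is the unique second coordinate of a pair in this common graph with first coordinate $\mu$, so $\rho_1^*$ and $\rho_2^*$ agree on $N$; since $\rho_i\in S$, the set $N$ spans $X^*(T)_\Q$ and hence generates a finite-index sublattice of $X^*(T)$, and two homomorphisms into the torsion-free group $X^*(\TT)$ agreeing on a finite-index sublattice agree everywhere. Thus $\rho_1^*=\rho_2^*$, hence $\rho_1=\rho_2$. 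As $N_{T\times\TT}(V)$ is finite it has only finitely many subsets, so $S$ is finite.

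There is no serious obstacle here. The one implication to be careful with is the last step of part~(2) — that a homomorphism of abelian groups is pinned down on the whole group once it is known on a finite-index subgroup, provided the target is torsion-free — which holds because some positive multiple of $\rho_1^*(\mu)-\rho_2^*(\mu)$ vanishes for every $\mu$. It is also worth double-checking the elementary facts used along the way: that $T$ is genuinely a maximal torus of $G_\rho$, and that Remark~\ref{r:weights2} indeed places $\Gamma_\rho$ inside the finite set $N_{T\times\TT}(V)$.
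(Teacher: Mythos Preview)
Your argument is correct. Part~(1) is exactly the paper's approach, just with the details of how Lemma~\ref{lem71} applies spelled out.

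For part~(2), you take a genuinely different and more elementary route than the paper. The paper reduces to an abstract lemma about lattices: given a finite set $E\sub X\times Y$, there are only finitely many $f\in\Hom_\Z(X,Y)$ for which $E_f:=\{x\mid(x,f(x))\in E\}$ spans $X_\Q$. It proves this by an analytic argument---choosing norms on $X_\R$ and $Y_\R$, bounding the operator norm of such $f$ in terms of $\max_{y\in E_Y}\|y\|$, and concluding that the set of admissible $f$ is bounded and hence contains finitely many lattice points. Your approach bypasses all of this: you observe that $f\mapsto\{(x,f(x))\mid x\in E_f\}$ is injective on the set in question (because once $E_f$ spans $X_\Q$, the values of $f$ on $E_f$ determine $f$ on a finite-index sublattice, and $Y$ is torsion-free), and the target of this map is the finite set of subsets of $E$. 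This is shorter and purely algebraic; the paper's norm argument, on the other hand, would in principle give an explicit bound on the size of such $\rho$, though the paper does not exploit this.
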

	
	\begin{proof}
		The first assertion follows from Lemma~\ref{lem71} and the characterization of $F_\rho$ as $V_\rho^{\st}(G_\rho,\theta)/G_\rho$. The second claim follows from the identity $N_T(V_\rho) = \{ \mu \in X^*(T) \mid (\mu,\rho^*(\mu)) \in N_{T \times \TT}(V) \}$ (see Remark~\ref{r:weights2}) and the following lemma.
	\end{proof}

	\begin{lem}
		Let $X$ and $Y$ be lattices. Let $E \sub X \times Y$ be a finite subset. For a $\Z$-linear map $f\colon  X \to Y$, consider $E_f := \{ x \in X \mid (x,f(x)) \in E \}$. Then the set
		\[
			\left\{ f \in \Hom_\Z(X,Y) \mid \langle E_f \rangle_\Q = X_\Q \right\}
		\]
		is finite.
	\end{lem}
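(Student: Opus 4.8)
The plan is to exploit that $E_f$ is always a subset of the \emph{fixed} finite set $\pi_X(E)$, where $\pi_X\colon X \times Y \to X$ denotes the projection, together with the rigidity of $\Z$-linear maps into a torsion-free group. Write $n := \operatorname{rank} X$; the case $n = 0$ is trivial, so assume $n \geq 1$, and set $S := \{ f \in \Hom_\Z(X,Y) \mid \langle E_f \rangle_\Q = X_\Q \}$, the set whose finiteness is to be proved.

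First I would record the rigidity statement: if $x_1, \dots, x_n \in X$ are $\Z$-linearly independent and $f, f' \in \Hom_\Z(X,Y)$ agree on all the $x_i$, then $f = f'$. Indeed, $g := f - f'$ then vanishes on the finite-index subgroup $X' := \langle x_1, \dots, x_n \rangle_\Z$, so for every $x \in X$ some positive multiple $mx$ lies in $X'$, whence $m\,g(x) = 0$; since $Y$ is a lattice, and in particular torsion-free, $g(x) = 0$. Thus a $\Z$-linear map out of $X$ is completely determined by its restriction to any $\Z$-linearly independent $n$-tuple.

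Next I would build an injection $\Phi\colon S \to E^n$. For $f \in S$, the subset $E_f \subseteq X$ spans $X_\Q$, hence contains a $\Q$-basis, i.e.\ an $n$-tuple $(x_1^f, \dots, x_n^f)$ of $\Z$-linearly independent elements; I would fix one such tuple for each $f$ once and for all, for instance by fixing a total order on the finite set $\pi_X(E)$ and choosing the lexicographically smallest valid tuple. Since $(x_i^f, f(x_i^f)) \in E$ for each $i$, this defines
\[
  \Phi(f) := \bigl( (x_1^f, f(x_1^f)), \dots, (x_n^f, f(x_n^f)) \bigr) \in E^n .
\]
If $\Phi(f) = \Phi(f')$, then $(x_i^f)_i = (x_i^{f'})_i$ and $f, f'$ agree on this $\Z$-linearly independent tuple, so $f = f'$ by the rigidity statement; hence $\Phi$ is injective. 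As $E$ is finite, $E^n$ is finite, and therefore $S$ is finite, which is exactly the claim.

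There is no real obstacle here: the only point requiring a little care is upgrading ``$f$ is determined up to a homomorphism annihilated by a positive integer'' to ``$f$ is determined'', which is precisely the torsion-freeness of the lattice $Y$; everything else is bookkeeping with the finite set $E$ and the observation $E_f \subseteq \pi_X(E)$.
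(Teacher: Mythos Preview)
Your proof is correct and takes a genuinely different, more elementary route than the paper's. The paper equips $X_\R$ and $Y_\R$ with norms, considers the induced operator norm on $\Hom(X_\R,Y_\R)$, and shows that any $f$ in the set has operator norm bounded by a constant depending only on $E$; finiteness then follows because the set in question consists of lattice points in a bounded region. Your argument instead exploits directly that $E_f \subseteq \pi_X(E)$ and that a $\Z$-linear map into a torsion-free group is determined by its values on any $\Q$-basis contained in its domain, yielding an explicit injection into the finite set $E^n$. Your approach avoids all analytic apparatus and gives a cleaner explicit bound $|S| \leq |E|^n$; the paper's norm argument, while heavier, has the mild conceptual advantage of showing that the set is bounded as a subset of $\Hom(X_\R,Y_\R)$, which is slightly more than finiteness. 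Either way, your proof is complete and arguably preferable for this lemma.
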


	\begin{proof}
		Let $F$ be the set in question. Let $E_X$ and $E_Y$ be the images of $E$ under the projections $X \times Y \to X$ and $X \times Y \to Y$. Assume that $F \neq \emptyset$. Then $\langle E_X \rangle_\Q = X_\Q$.
		
		We equip $X_\R$ and $Y_\R$ with norms, both denoted by $\lVert \cdot \rVert$. For $f \in \Hom(X_\R,Y_\R)$, let $\norm{f} = \max_{\norm{x} \leq 1} \norm{f(x)}$ be the operator norm associated with these two norms. Define 
		\[
			R := \max_{y \in E_Y} \norm{y}. 
		\]
		Let $B \sub E_X$ be a subset which is an $\R$-basis of $X_\R$. Then we find a unique scalar product on $X_\R$ for which $B$ is an orthonormal basis. Let $\norm{\cdot}_B$ be the associated norm on $X_\R$, and let $\norm{f}_B = \max_{\norm{x}_B \leq 1} \norm{f(x)}$ be the corresponding operator norm on $\Hom(X_\R,Y_\R)$. As $\Hom(X_\R,Y_\R)$ is finite-dimensional, this norm is equivalent to the operator norm $\norm{\cdot}$ from above. We find $d_B, D_B > 0$ such that $d_B\norm{f} \leq \norm{f}_B \leq D_B\norm{f}$ for all $f \in \Hom(X_\R,Y_\R)$. Let $d>0$ be the minimum of all $d_B$, where $B$ ranges over all bases $B \sub E_X$.
		
		Now let $f \in F$. Then $\langle E_f \rangle_\R = X_\R$ and $f(E_f) \sub E_Y$. Choose a basis $B = \{x_1,\ldots,x_n\} \sub E_f$ of $X_\R$. Let $x = \sum \lambda_i x_i \in X$ be such that $\norm{x}_B \leq 1$. This implies $\smash{\norm{\lambda}_2^2} = \sum \lvert \lambda_i \rvert^2 \leq 1$, where $\lambda = (\lambda_1,\ldots,\lambda_n)$. It is well known that $\norm{\lambda}_1 \leq \sqrt{n}\norm{\lambda}_2$. Now
		\begin{align*}
			\norm{f(x)} \leq \underbrace{\sum_{i=1}^n \lvert\lambda_i\rvert}_{\leq \sqrt{n}} \underbrace{\norm{f(x_i)}}_{ \leq R} \leq R\sqrt{n}
		\end{align*}
		and therefore $\norm{f}_B \leq R\sqrt{n}$. This tells us $\norm{f} \leq \frac{1}{d_B}\norm{f}_B \leq \frac{R\sqrt{n}}{d}$. We have shown that the set $\{ f \in \Hom(X_\R,Y_\R) \mid \langle E_f \rangle_\R = X_\R \}$ is bounded. The set $F$ is the set of lattice points of this set and is therefore finite.
	\end{proof}

	\section{Application: Quiver moduli} \label{s:quiver_moduli}
	
	Before we discuss the general theory of torus actions on quiver moduli, let us look at a well-known example first.
	
	\begin{ex} \label{e:grass}
		Let $G = \GL_m(\C)$, which acts on $V = M_{m \times n}(\C)$, the space of $m \times n$ matrices, by left multiplication. Let $V_0 = \C^m$ be the natural representation of $\GL_m(\C)$. Then $V \cong V_0 \otimes \C^n$. 
		
		Assume that $m \leq n$. Let $\theta = \det \in X^*(G)$. The ring $\C[V]^{G,\det}$ is generated by the maximal minors of an $m \times n$ matrix. This shows that a matrix $A$ is $\theta$-semi-stable if and only if its rank is $m$. Every semi-stable point is therefore stable. The quotient $V^{\st}(G,\theta)/G$ identifies with the Grassmannian $\Gr_{n-m}(\C^n)$ by assigning to a matrix $A$ of full rank its kernel. Fix the maximal torus $T$ of diagonal matrices. The character lattice is $X^*(T) = \Z \epsilon_1 \oplus \ldots \oplus \Z \epsilon_m$, where $\epsilon_i(\diag(s_1,\ldots,s_m)) = s_i$. Note that the first fundamental weight is $\omega_1 = \epsilon_1$ and the natural representation $V_0$ has highest weight $\omega_1$. 
		
		Let $\TT = \C^\times$.
		We choose integers $w_1,\ldots,w_n$. We let $\TT$ act on $V$ by right multiplication with $\operatorname{diag}(t^{w_1},\ldots,t^{w_n})$. Let us assume that $w_1 \geq \ldots \geq w_n$, and let $0 = p_0 < p_1 < \ldots < p_k = n$ be such that
		\[
			w_1 = \dots = w_{p_1} > w_{p_1+1} = \dots = w_{p_2} > \dots > w_{p_{k-1}+1} = \dots = w_n.
		        \]
		Define $q_j = p_j-p_{j-1}$ for $j=1,\ldots,k$.
		
		We now consider the fixed point components. Let $\rho\colon  \C^\times  \to T$ be a one-parameter subgroup; it is given by $\rho(t) = \operatorname{diag}(t^{r_1},\ldots,t^{r_m})$. As we consider one-parameter subgroups up to conjugation by $W = S_m$, we may assume $r_1 \geq \ldots \geq r_m$. Let $0 = s_0 < s_1 < \ldots < s_l = m$ be such that
		\[
			r_1 = \dots = r_{s_1} > r_{s_1+1} = \dots = r_{s_2} > \dots > r_{s_{l-1}+1} = \dots = r_m, 
		\]
		and define $t_i = s_i-s_{i-1}$ for $i=1,\ldots,l$. We get $G_\rho = \GL_{t_1}(\C) \times \dots \times \GL_{t_l}(\C)$, viewed as block-diagonal matrices in $\GL_m(\C)$. The linear subspace $V_\rho \sub V$ by definition consists of all matrices $A = (a_{ij})$ for which right multiplication with $\diag(t^{w_1},\ldots,t^{w_n})$ agrees with left multiplication with $\diag(t^{r_1},\ldots,t^{r_m})$. This forces $a_{ij} = 0$ whenever $r_i \neq w_j$. Let $I \sub \{1,\ldots,l\}$ be the subset of those $i$ for which there exists a $j \in \{1,\ldots,k\}$ such that
		\[
			r_{s_i} = w_{p_j}.
		\]
		Such an index $j$ is unique. Let $I = \{i_1 < \dots < i_u \}$. The index corresponding to $i_\nu$ will be denoted by  $j_\nu$. We hence obtain an increasing sequence $j_1 < \dots < j_u$. As a $G_\rho$-representation, the subspace $V_\rho$ is isomorphic to
		\[
			\bigoplus_{\nu=1}^u M_{t_{i_\nu} \times q_{j_\nu}}(\C), 
		\]
		where $G_\rho$ acts on the $\supth{\nu}$ summand by left multiplication with the $\supth{i_\nu}$ factor in $G_\rho$. 
		
		For $F_\rho \neq \emptyset$, it is necessary by Proposition~\ref{p:finite}  that $\mathcal{N}_T(V_\rho) = X^*(T)_\Q$, which says that a matrix in $V_\rho$ must have no zero rows. Thus $I = \{1,\ldots,l\}$. (This tallies with the observation that an $m \times n$ matrix with rank $m$ cannot have zero rows.)		
		We get $i_\nu = \nu$ and $k \geq l$. A matrix $A \in V_\rho$ therefore has the following block shape: 
		\[
			\begin{pmatrix}
				\ldots & \boxed{t_1 \times q_{j_1}} & \ldots & \\
				& & \ldots & \boxed{t_2 \times q_{j_2}} & \ldots \\
				& & & & & \ddots \\
				& & & & & & \ldots & \boxed{t_l \times q_{j_l}} & \ldots \\
			\end{pmatrix}
		\]
		A matrix of this block shape can only have maximal rank if $q_{j_i} \geq t_i$ for all $i=1,\ldots,l$. If this is satisfied, then the fixed point component is, by Theorem~\ref{t:main}, isomorphic to
		\[
			F_\rho \cong \prod_{i=1}^l \Gr_{t_i}(\C^{q_{j_i}}).
		\]
		The one-parameter subgroups $\rho$ which occur are parametrized by the following finite set of data:
		\begin{itemize}
			\item a number $l \leq \min\{m,k\}$ and a sequence $0 = s_0 < s_1 < \dots < s_l = m$, 
			\item a sequence $1 \leq j_1 < \dots < j_l \leq k$
		\end{itemize}
		such that $t_i := s_i - s_{i-1} \leq q_{j_i}$ for every $i=1,\ldots,l$.
	\end{ex}

	Torus actions on quiver moduli contain Example~\ref{e:grass} as a special case. Let $Q$ be a quiver, \textit{i.e.}\ an oriented graph. Formally, $Q$ consists of two sets $Q_0$ and $Q_1$ and two maps $s,t\colon  Q_1 \to Q_0$. The set $Q_0$ is the set of vertices of $Q$, the set $Q_1$ is the set of arrows, and for $a \in Q_1$, the vertices $s(a)$ and $t(a)$ are the source and target of $a$, respectively. We assume that for any two vertices $i$ and $j$, there are just finitely many $a \in Q_1$ such that $s(a) = i$ and $t(a) = j$. A representation $M$ of $Q$ is a collection of complex vector spaces $(M_i)_{i \in Q_0}$ together with $\C$-linear maps $(M_a)_{a \in Q_1}$ such that $M_a\colon  M_{s(a)} \to M_{t(a)}$. If $\bigoplus_{i \in Q_0} M_i$ is finite-dimensional, we say that $M$ is finite-dimensional and call $\dimvect M := (\dim_\C M_i)_{i \in Q_0} \in \smash{\Z_{\geq 0}^{Q_0}}$ the dimension vector of $M$. Given two representations $M$ and $N$ of $Q$, a homomorphism $f\colon  M \to N$ is a collection $f_i\colon  M_i \to N_i$ of linear maps such that $N_af_{s(a)} = f_{t(a)}M_a$ holds for all $a \in Q_1$. In this way we obtain a $\C$-linear abelian category. We refer to \cite[Chapter~II]{ASS:06} for more details.
	
	Let $\Lambda(Q) = \{ \alpha = (\alpha_i)_{i \in Q_0} \in \Z^{Q_0} \mid \alpha_i = 0 \text{ for all but finitely many } i \}$, and let $\Lambda_+(Q) = \Lambda(Q) \cap \smash{\Z_{\geq 0}^{Q_0}}$.
	Fix a vector $\alpha \in \Lambda_+(Q)$. Fix complex vector spaces $V_i$ of dimension $\alpha_i$. Consider the (finite-dimensional) vector space
	\[
		V := R(Q,\alpha) := \bigoplus_{a \in Q_1} \Hom(V_{s(a)},V_{t(a)}).
	\]
	An element of $V$ can be regarded as a representation of $Q$ of dimension vector $\alpha$. The group $G_\alpha = \prod_{i \in Q_0} \GL(V_i)$ acts on $V$ by change of basis. Two elements of $V$ are isomorphic as representations if and only if they lie in the same $G_\alpha$-orbit. The central subgroup $\Delta := \C^\times \cdot \id$ acts trivially on $V$. Therefore, the action descends to an action of $G := PG_\alpha := G_\alpha/\Delta$.
	
	We will make a connection between weights of the action of a maximal torus of $G$ and the support $\supp(\alpha) = \{i \in Q_0 \mid \alpha_i \neq 0 \}$ of the dimension vector. To this end, fix a basis of every vector space $V_i$. Let $T_\alpha$ be the maximal torus of $G_\alpha$ of diagonal matrices with respect to our choices of bases. The image of $T_\alpha$ under $G_\alpha \to PG_\alpha = G$ is a maximal torus $T$ of $G$. For $i \in Q_0$ and $r \in \{1,\ldots,\alpha_i\}$, let $y_{i,r} \in X^*(T_\alpha)$ be the character which selects from $t = (t_i)_{i \in Q_0}$ the $\supth{r}$ diagonal entry of the matrix $t_i$. Then $X^*(T_\alpha)$ is the free abelian group generated by the basis elements $y_{i,r}$, and
	\[
		X^*(T) = \left\{ \sum_{i \in Q_0} \sum_{r=1}^{\alpha_i} a_{i,r}y_{i,r} \ \middle|\  \sum_{i \in Q_0} \sum_{r=1}^{\alpha_i} a_{i,r} = 0 \right\} \sub X^*(T_\alpha);
	\]
	it is generated as an abelian group by all differences $y_{j,s} - y_{i,r}$. Recall that $N_T(V)$ is the set of weights of the $T$-action on $V$ and $\mathcal{N}_T(V)$ is its $\Q$-linear span inside $X^*(T)_\Q$. As $G$ acts by conjugation, we have
	\[
		N_T(V) = \left\{ y_{t(a),s} - y_{s(a),r} \mid a \in Q_1,\ r \in \{1,\ldots,\alpha_{s(a)}\},\ s \in \{1,\ldots,\alpha_{t(a)} \} \right\}.
	\]

	\begin{lem} \label{l:connectedness}
		The full subquiver supported on $\supp(\alpha)$ is connected if and only if $\mathcal{N}_T(V) = X^*(T)_\Q$.
	\end{lem}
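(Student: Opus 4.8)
The plan is to translate the statement into a connectivity question for a combinatorial graph. First I would discard the vertices not in $\supp(\alpha)$: if $\alpha_i=0$ then $V_i=0$, so every arrow incident to such a vertex contributes the zero summand $\Hom(V_{s(a)},V_{t(a)})=0$ to $V=R(Q,\alpha)$; hence $V$, the group $G$, the maximal torus $T$, and the weight set $N_T(V)$ are all unchanged if $Q$ is replaced by the full subquiver on $S:=\supp(\alpha)$. So I may assume $\alpha_i>0$ for every $i\in Q_0$ and must prove that $Q$ is connected iff $\mathcal N_T(V)=X^*(T)_\Q$. Put $P:=\{(i,r)\mid i\in Q_0,\ 1\le r\le\alpha_i\}$ and $n:=|P|$, so $X^*(T_\alpha)=\bigoplus_{u\in P}\Z y_u$ and $X^*(T)_\Q=\{\sum_u a_uy_u\mid \sum_u a_u=0\}$, a subspace of dimension $n-1$. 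Since every weight in $N_T(V)$ has the form $y_u-y_v$, the inclusion $\mathcal N_T(V)\subseteq X^*(T)_\Q$ is automatic, and only the reverse inclusion, equivalently $\dim_\Q\mathcal N_T(V)=n-1$, has to be established.

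Next I would introduce the graph $\Gamma$ with vertex set $P$ in which two distinct vertices $(i,r)$ and $(j,s)$ are adjacent exactly when $Q$ has an arrow between $i$ and $j$. Comparing with the explicit formula for $N_T(V)$ recalled just before the lemma, one sees that $\mathcal N_T(V)$ is precisely the $\Q$-span of the edge vectors $y_u-y_v$ of $\Gamma$, that is, the column space of the (signed) incidence matrix of $\Gamma$. By the classical rank formula for an incidence matrix, $\dim_\Q\mathcal N_T(V)=n-c(\Gamma)$, where $c(\Gamma)$ is the number of connected components of $\Gamma$. Therefore $\mathcal N_T(V)=X^*(T)_\Q$ if and only if $c(\Gamma)=1$, and the lemma is reduced to the assertion that $\Gamma$ is connected if and only if $Q$ is connected.

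The remaining equivalence is the core of the proof. Writing $P_i:=\{(i,r)\mid 1\le r\le\alpha_i\}$ for the fibre over $i$, every edge of $\Gamma$ joins some $P_i$ to some $P_j$ with $i,j$ adjacent in $Q$; so collapsing each fibre to a point exhibits the underlying graph of $Q$ as a quotient of $\Gamma$, and hence connectedness of $\Gamma$ implies connectedness of $Q$. For the converse I would argue, under the assumption that $Q$ is connected with at least two vertices, that every vertex $i$ is an endpoint of some arrow $a$; taking $j$ to be the other endpoint, the length-two walks $(i,r)\sim(j,s)\sim(i,r')$ in $\Gamma$ (or direct edges, if $a$ is a loop) put all of $P_i$ in one component of $\Gamma$, and since the fibres are then chained together along the arrows of the connected quiver $Q$, $\Gamma$ is connected. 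I expect the main obstacle to be the degenerate case $S=\{i\}$, which the previous sentence does not cover: there $\Gamma$ is connected precisely when $\alpha_i=1$ or $Q$ has a loop at $i$, so this case must be inspected by hand, and it is the point at which the equivalence is most delicate.
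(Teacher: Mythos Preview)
Your reduction via the auxiliary graph $\Gamma$ and the incidence-matrix rank formula is correct and gives a slightly different route from the paper's. The paper argues directly: for the forward direction it writes each generator $y_{j,s}-y_{i,r}$ of $X^*(T)$ as a telescoping sum of weights along an explicit unoriented walk in $Q$; for the reverse direction it uses a bipartition $Q_0=C\sqcup D$ with no arrows between the parts to build a one-parameter subgroup $\phi$ with $N_T(V)\subseteq\ker\phi^*$ but $\phi^*\neq 0$ on $X^*(T)$. Your version trades these explicit constructions for the identity $\dim_\Q\mathcal N_T(V)=|P|-c(\Gamma)$ together with the elementary comparison of connectedness of $\Gamma$ and of $Q$; the underlying combinatorics is the same, but your packaging is a bit more structural.

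The degenerate case you flag is real, and your diagnosis is exactly right: when $\supp(\alpha)=\{i\}$ with $\alpha_i>1$ and no loop at $i$, the full subquiver on $\supp(\alpha)$ is a single loopless vertex, hence connected, yet $V=R(Q,\alpha)=0$, so $\mathcal N_T(V)=0$ while $\dim_\Q X^*(T)_\Q=\alpha_i-1>0$. Thus the lemma as literally stated fails in this case, and there is nothing for you to repair by hand. The paper's own proof shares this blind spot: its forward argument tacitly requires, in order to express $y_{i,s}-y_{i,r}$ with $r\neq s$, an unoriented walk of length at least one from $i$ to itself, which does not exist here. The honest statement is that the equivalence holds under the harmless extra hypothesis that the full subquiver on $\supp(\alpha)$ has at least one arrow (equivalently, one excludes the case of a single loopless vertex with $\alpha_i>1$); with that in place both your argument and the paper's are complete.
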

	
	\begin{proof}
		We may without loss of generality assume that $\supp(\alpha) = Q_0$. To show the direction from left to right, assume that $Q$ is connected. This means that for any two vertices $i$ and $j$, there exists an unoriented path which connects them. More precisely, there exist arrows $a_1,\ldots,a_k$ and signs $\epsilon_1,\ldots,\epsilon_k \in \{\pm 1\}$ such that
		\[
			s(a_1^{\epsilon_1}) = i, \quad
			t(a_1^{\epsilon_1}) = s(a_2^{\epsilon_2}), \quad
			\ldots, \quad
			t(a_{k-1}^{\epsilon_{k-1}}) = s(a_k^{\epsilon_k}), \quad
			t(a_k^{\epsilon_k}) = j.
		\]
		Here, $a^1 := a$ and $a^{-1}$ is a formal symbol for which we define $s(a^{-1}) = t(a)$ and $t(a^{-1}) = s(a)$. Given a character $y_{j,s} - y_{i,r}$, we choose an unoriented path connecting $i$ and $j$ as above and indices $r_1,\ldots,r_k, s_1,\ldots,s_k$ with $r_\nu \in \{1,\ldots,\smash{\alpha_{s(a_\nu^{\epsilon_\nu})}}\}$ and $s_\nu \in \{1,\ldots,\smash{\alpha_{t(a_\nu^{\epsilon_\nu})}}\}$ such that 
		\[
			r_1 = r, \quad
			s_1 = r_2, \quad
			\ldots, \quad
			s_{k-1} = r_k, \quad
			s_k = s.
		\]
		This yields
		\[
			y_{j,s} - y_{i,r} = \sum_{\nu=1}^k \left(y_{t(a_\nu^{\epsilon_\nu}),s_\nu} - y_{s(a_\nu^{\epsilon_\nu}),r_\nu}\right) \in \mathcal{N}_T(V).
		\]
		Suppose conversely that $Q$ is not connected. Then there exist two non-empty disjoint subsets $C,D \sub Q_0$ such that $C \sqcup D = Q_0$ and such that there are no arrows from one to the other. We let $\C^\times$ act on $V_i$ by weight $0$ if $i \in C$ and by weight $1$ if $i \in D$. This provides us with a morphism $\phi\colon  \C^\times \to T$; let $\phi^*\colon  X^*(T) \to \Z$ be the induced homomorphism on characters. If $i \in C$ and $j \in D$, then
		\[
			\phi^*(y_{j,s}-y_{i,r}) = 1
		\]
		for any $r \in \{1,\ldots,\alpha_i\}$ and $s \in \{1,\ldots,\alpha_j\}$. But as all arrows in $Q$ start and end in either $C$ or $D$, we see that $N_T(V) \sub \ker(\phi^*)$. Therefore, $y_{j,s} - y_{i,r} \notin \mathcal{N}_T(V)$.
	\end{proof}

	Let $\theta = (\theta_i)_{i \in Q_0} \in \Z^{Q_0}$ be such that $\theta^T\cdot \alpha = \sum_i \theta_i\alpha_i = 0$. We may interpret $\theta$ as a character of $G_\alpha$ by the assignment $g \mapsto \smash{\prod_{i \in Q_0}} \smash{\det(g_i)^{\theta_i}}$. The vanishing condition guarantees that it descends to a character of $G$; by a slight abuse of notation, we also denote this character  by $\theta$. By \cite[Proposition~3.1]{King:94} a representation $M \in V$ is semi-stable with respect to $\theta$ if and only if $\theta(U) \geq 0$ for all subrepresentations $U \sub M$; here $\theta(U)$ is by definition $\theta^T\cdot \dimvect U$. Moreover, $M$ is $\theta$-stable if and only if $\theta(U) > 0$ for all non-zero proper subrepresentations $U$ of $M$. As the $\theta$-stable representations are the simple objects in an appropriate abelian category (see \cite[Lemma~2.3]{Reineke:03}), Schur's lemma tells us that the automorphism group of a $\theta$-stable representation $M$ is $\C^\times$. Therefore, $G$ acts freely on the stable locus $V^{\st}(G,\theta)$. Assumption~\ref{a:general_assumption} is hence fulfilled. Set  
	\[
		M^{\theta\hypst}(Q,\alpha) := V^{\st}(G,\theta)/G.
	\]
	
	Now assume that $Q_0$ (and hence also $Q_1$) is a finite set.
	Let $\TT = (\C^\times)^{Q_1}$. Let a group element $t = (t_a)_{a \in Q_1} \in \TT$ act on $M = (M_a)_{a \in Q_1} \in V$ by 
	\[
		t.M = (t_aM_a)_{a \in Q_1}.
	\]
	It commutes with the $G$-action, so we obtain a $\TT$-action on $M^{\theta\hypst}(Q,\alpha)$.
	This action was studied by Weist in \cite{Weist:13}. We recall his result and show how it follows from our description.
	
	Let $\hat{Q}$ be the (infinite) quiver given by $\hat{Q}_i = Q_i \times X^*(\TT)$ for $i=0,1$ and $\hat{s},\hat{t}$ be  defined by
\[
		\hat{s}(a,\chi) = (s(a),\chi),\quad \hat{t}(a,\chi) = (t(a),\chi+x_a).
\]
	Here, $x_a\colon \TT \to \C^\times$ is the character given by $x_a(t) = t_a$. A vector $\beta \in \Lambda_+(\hat{Q})$ is called a \emph{cover} of $\alpha$ if $\sum_\chi \beta_{i,\chi} = \alpha_i$ for every $i \in Q_0$. On $\Lambda_+(Q)$ there is an action of the group $X^*(\TT)$ given by $\xi.\beta = (\beta_{i,\chi+\xi})_{i,\chi}$. We call two vectors translates of one another if they lie in the same orbit. We define 
	\[
		\hat{\theta} = \left(\hat{\theta}_{i,\chi}\right)_{(i,\chi) \in \hat{Q}_0} \in \Z^{\hat{Q}_0}
	\]
	by $\hat{\theta}_{i,\chi} = \theta_i$. The following is \cite[Theorem~3.8]{Weist:13}. 
	
	\begin{thm}[Weist]
		The fixed point locus $M^{\theta\hypst}(Q,\alpha)^{\TT}$ is the disjoint union $\bigsqcup_\beta F_\beta$ which ranges over all $\beta \in \Lambda_+(Q)$ which cover $\alpha$, up to translation. The component $F_\beta$ is isomorphic to
		\[
			M^{\hat{\theta}\hypst}(\hat{Q},\beta).
		\]
	\end{thm}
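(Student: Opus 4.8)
The plan is to derive Weist's theorem directly from Theorem~\ref{t:main} by translating each of its three outputs---the index set, the group $G_\rho$, and the linear subspace $V_\rho$---into the combinatorics of $\hat{Q}$. Throughout, write $G = PG_\alpha = G_\alpha/\Delta$ with $\Delta = \C^\times\cdot\id$, keep the diagonal maximal torus $T_\alpha\sub G_\alpha$ from the text, set $T := T_\alpha/\Delta$, and recall that the Weyl group of $G$ is $W = \prod_{i\in Q_0}S_{\alpha_i}$. First I would match index sets: a morphism $\rho\colon\TT\to T$ lifts along the split surjection $T_\alpha\to T$ (split since $\Delta$ is a subtorus) to some $\tilde\rho\colon\TT\to T_\alpha$, which I encode by the characters $\chi_{i,r}\in X^*(\TT)$ through which $\tilde\rho(t)$ scales the $r$-th chosen basis vector $e_{i,r}$ of $V_i$; conversely any tuple $(\chi_{i,r})$ defines such a $\tilde\rho$, hence a $\rho$. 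Two lifts of a given $\rho$ differ by a morphism $\TT\to\Delta$, i.e.\ by a simultaneous shift $\chi_{i,r}\mapsto\chi_{i,r}+\xi$, and conjugating $\rho$ by $W$ permutes $r$ within each vertex. Hence $W$-conjugacy classes of morphisms $\TT\to T$ are in bijection with the functions $\beta_{i,\chi} := \#\{r\mid\chi_{i,r}=\chi\}$ on $\hat{Q}_0 = Q_0\times X^*(\TT)$ satisfying $\sum_\chi\beta_{i,\chi}=\alpha_i$, taken up to the translation action of $X^*(\TT)$---that is, with covers of $\alpha$ up to translation.

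Next, fixing $\rho$ and its cover $\beta$, I would set $W_{(i,\chi)} := \mathrm{span}\{e_{i,r}\mid\chi_{i,r}=\chi\}\sub V_i$ (of dimension $\beta_{i,\chi}$, with $V_i = \bigoplus_\chi W_{(i,\chi)}$) and identify $G_\rho$ and $V_\rho$. For the group: the centralizer of $\im\tilde\rho$ in $G_\alpha$ is $\prod_{(i,\chi)\in\hat{Q}_0}\GL(W_{(i,\chi)})$, whose image in $G$ is the quotient of this product by $\Delta$ (acting by scalars), namely the group $PG_\beta$ acting in $M^{\hat{\theta}\hypst}(\hat{Q},\beta)$; this lies in $G_\rho = C_G(\im\rho)$, and for the reverse inclusion I would lift $\bar g\in G_\rho$ to $g\in G_\alpha$, write $g\tilde\rho(t)g^{-1}\tilde\rho(t)^{-1}=\lambda(t)\id$ for a morphism $\lambda\colon\TT\to\C^\times$, compare determinants at a vertex to get $\lambda^{\alpha_i}=1$, and conclude $\lambda\equiv 1$ since a morphism from the connected group $\TT$ into the finite group $\mu_{\gcd_i\alpha_i}$ is trivial. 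For the slice: decomposing $M=(M_a)\in R(Q,\alpha)$ into blocks $(M_a)^{\chi',\chi}\colon W_{(s(a),\chi)}\to W_{(t(a),\chi')}$, the condition $t.M=\rho(t)M$ becomes $x_a(t)(M_a)^{\chi',\chi} = \chi'(t)\chi(t)^{-1}(M_a)^{\chi',\chi}$, which kills every block with $\chi'\neq\chi+x_a$; since $\hat{s}(a,\chi)=(s(a),\chi)$ and $\hat{t}(a,\chi)=(t(a),\chi+x_a)$, the remaining blocks assemble into a $PG_\beta$-equivariant isomorphism $V_\rho\cong\bigoplus_{(a,\chi)\in\hat{Q}_1}\Hom(W_{\hat{s}(a,\chi)},W_{\hat{t}(a,\chi)})=R(\hat{Q},\beta)$. (Equivalently, $N_T(V_\rho)$ can be read off from Remark~\ref{r:weights2}.)

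Finally I would match the stability parameter: under $PG_\beta=G_\rho\hookrightarrow G$ a class $(g_{i,\chi})$ goes to the element with vertex-$i$ component $\bigoplus_\chi g_{i,\chi}$, so $\theta$ restricts to $(g_{i,\chi})\mapsto\prod_{(i,\chi)}(\det g_{i,\chi})^{\theta_i}$, which is exactly $\hat{\theta}$; and $\hat{\theta}^{T}\cdot\beta=\sum_i\theta_i\sum_\chi\beta_{i,\chi}=\theta^{T}\cdot\alpha=0$, so $M^{\hat{\theta}\hypst}(\hat{Q},\beta)$ is defined. Feeding all of this into Theorem~\ref{t:main} gives $M^{\theta\hypst}(Q,\alpha)^{\TT}=\bigsqcup_\rho F_\rho$ with
\[
	F_\rho\;\cong\;V_\rho^{\st}(G_\rho,\theta)/G_\rho\;=\;R(\hat{Q},\beta)^{\st}(PG_\beta,\hat{\theta})/PG_\beta\;=\;M^{\hat{\theta}\hypst}(\hat{Q},\beta),
\]
the index $\rho$ running over covers of $\alpha$ up to translation by the first paragraph. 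I expect the only genuinely delicate points to be the two where $\Delta$ enters: showing that $W$-conjugacy classes of $\rho$ correspond to covers up to \emph{translation} (rather than covers on the nose), and showing that centralizing $\im\rho$ in $G_\alpha/\Delta$ is no more than centralizing a lift in $G_\alpha$; everything else is bookkeeping with $X^*(\TT)$-weights. One can also remark, though it is not needed for the statement, that $PG_\beta$ acts freely on the $\hat{\theta}$-stable locus of $R(\hat{Q},\beta)$ by the same Schur-lemma argument applied to $\hat{Q}$, so the construction could be iterated.
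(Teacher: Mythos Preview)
Your proposal is correct and follows essentially the same route as the paper: fix the diagonal maximal torus, identify $V_\rho$ with $R(\hat{Q},\beta)$ via the $\TT$-weight decomposition of each $V_i$, identify $G_\rho$ with $PG_\beta$, and then invoke Theorem~\ref{t:main}. If anything, you are more careful than the paper on the two points you flag as delicate: the paper does not spell out the bijection between $W$-classes of $\rho$ and covers up to translation, and in its centralizer computation it effectively works in $G_\alpha$ rather than in $G=G_\alpha/\Delta$, leaving implicit the step (your determinant-and-connectedness argument) that centralizing $\rho$ modulo $\Delta$ is no weaker than centralizing a lift $\tilde\rho$ in $G_\alpha$.
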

	
	The index set of the disjoint union is infinite, but if $F_\beta$ is non-empty, then the support of $\beta$ is connected. This reduces the consideration to a finite subset.
	
	Choose a basis of each of the vector spaces $V_i$. Let $T_\alpha \sub G_\alpha$ be the maximal torus of tuples of invertible diagonal matrices. Let $T = T_\alpha/\Delta$. It is a maximal torus of $G$. Let $\rho\colon  \TT \to T$ be a morphism. Then
	\[
		V_\rho = \{ M \in V \mid t_aM_a = \rho(t)_jM_a\rho(t)_i^{-1} \text{ for all } a\colon  i \to j \}.
	\]
	Via $\rho$, the torus $\TT$ operates on $V_i$, inducing a weight space decomposition $V_i = \smash{\sum_\chi} V_{i,\chi}$. The same argument as in \cite[Lemma~3.4]{Weist:13} shows that $M$ lies in $V_\rho$ if and only if we have $M_a(V_{i,\chi}) \sub V_{j,\chi+\epsilon_a}$. This shows that $V_\rho = R(\smash{\hat{Q}},\beta)$, where $\beta_{i,\chi} = \dim V_{i,\chi}$. The group $G_\beta$ is identified with the subgroup $\{ g \in G_\alpha \mid g_i(V_{i,\chi}) \sub V_{i,\chi} \text{ for all } i,\chi \}$ of $G_\alpha$. This is precisely the centralizer of $\rho$: Let $g \in G_\beta$, $t \in \TT$. To show that $g_i$ commutes with $\rho(t)_i$, we apply it to $v \in V_i$. Write $v$ as $v = \smash{\sum_\chi v_\chi}$ with $v_\chi \in V_{i,\chi}$. Then
	\begin{align*}
		g_i\rho(t)_iv &= \sum_\chi g_i\rho(t)_iv_\chi = \sum_\chi \chi(t)g_iv_\chi, \\
		\rho(t)_ig_iv &= \sum_\chi \rho(t)_i\underbrace{g_iv_\chi}_{\in V_{i,\chi}} = \sum_\chi \chi(t)g_iv_\chi.
	\end{align*}
	Conversely, suppose that $g \in G_\alpha$ commutes with $\rho$. Let $v \in V_{i,\chi}$. We want to show that $g_iv \in V_{i,\chi}$. Let $t \in \TT$, and compute
	\[
		\rho(t)_ig_iv = g_i\rho(t)_iv = g_i\chi(t)v = \chi(t)g_iv.
	\]
	We have shown that $V_\rho^{\st}(G_\rho,\theta)/G_\rho \cong M^{\hat{\theta}\hypst}(\hat{Q},\beta)$.
	
	Lemma~\ref{l:connectedness} shows that the support of $\beta$ is connected if and only if $\mathcal{N}_T(V_\rho) = X^*(T)_\Q$ holds.
	
	\begin{ex}
		Let $Q$ be the quiver
		\[\begin{tikzcd}
			1 & 2,
			\arrow["{a,b,c}", shift left=2, from=1-1, to=1-2]
			\arrow[shift right=2, from=1-1, to=1-2]
			\arrow[from=1-1, to=1-2]
		\end{tikzcd}\]
		the so-called $3$-Kronecker quiver. Fix the dimension vector $\alpha = (2,3)$. Then $V = M_{3 \times 2}(\C)^3$, and $G$ is the quotient of $G_\alpha = \GL_2(\C) \times \GL_3(\C)$ by the central one-parameter subgroup $\Delta = \{(t\id,t\id) \mid t \in \C^\times \}$. A representative $(g,h)$ of an element in $G$ acts on $(A,B,C) \in V$ by 
		\[
			(g,h)\cdot (A,B,C) = (hAg^{-1},hBg^{-1},hCg^{-1}).
		\]
		Let $\theta \in X^*(G)$ be the character defined by $\theta(g,h) = \det(g)^{-3}\det(h)^2$. An application of \cite[Proposition~3.1]{King:94} yields that $\theta$-semi-stability and $\theta$-stability agree and that a representation $(A,B,C)$ is $\theta$-stable if and only if $\dim \langle Ax,Bx,Cx \rangle \geq 2$ for every $x \in \C^2 \setminus \{0\}$. The $\theta$-stable moduli space $M^{\theta\hypst}(Q,\alpha) = V^{\st}(G,\theta)/G$ has dimension six.
		
		Let $\mathcal{T} = (\C^\times)^3$, which acts linearly on $V$ by 
		\[
			(t_a,t_b,t_c).(A,B,C) = (t_aA,t_bB,t_cC).
		\]
		We will describe the fixed points under this torus action.
		We fix the maximal torus $T_\alpha$ of $G_\alpha$ of invertible diagonal matrices and the induced maximal torus $T = T_\alpha/\Delta$ of $G$. The Weyl group is $W = S_2 \times S_3$. By Theorem~\ref{t:main} the $\mathcal{T}$-fixed point locus of the quotient $V^{\st}(G,\theta)/G$ is the disjoint union $\smash{\bigsqcup_\rho} F_\rho$, where $F_\rho = \smash{V_\rho^{\st}}(G_\rho,\theta)/G_\rho$; the union ranges over all morphisms $\rho\colon  \mathcal{T} \to T$  which satisfy $\mathcal{N}_T(V_\rho) = X^*(T)_\Q$, up to $W$-conjugation. There are three types of such morphisms:
		\begin{enumerate}
			\item The morphism $\rho\colon  \mathcal{T} \to T$ is defined by
			\[
				\rho(t_a,t_b,t_c) = \left( \begin{pmatrix} 1 & \\ & 1 \end{pmatrix}, \begin{pmatrix} t_a & & \\ & t_b & \\ & & t_c \end{pmatrix} \right).
			\]
			For $V_\rho$ and $G_\rho$, we obtain
			\[
				V_\rho = \left\{ \left( \begin{pmatrix} * & * \\ \phantom{*} & \phantom{*} \\ \phantom{*} & \phantom{*} \end{pmatrix}, \begin{pmatrix} \phantom{*} & \phantom{*} \\ * & * \\ \phantom{*} & \phantom{*} \end{pmatrix}, \begin{pmatrix} \phantom{*} & \phantom{*} \\ \phantom{*} & \phantom{*} \\ * & * \end{pmatrix} 
			   \right) \right\}, \quad
				G_\rho = \GL_2(\C) \times (\C^\times)^3. 
			\]
			The fixed point component $F_\rho = \smash{V_\rho^{\st}}(G_\rho,\theta)/G_\rho$ consists of a single point.
			\item For $m_1,\ldots,m_4 \in \{a,b,c\}$ with $m_1\neq m_2 \neq m_3 \neq m_4$, let $\rho\colon  \mathcal{T} \to T$ be defined by
			\[
				\rho(t_a,t_b,t_c) = \left( \begin{pmatrix} 1 & \\ & t_{m_2}t_{m_3}^{-1} \end{pmatrix}, \begin{pmatrix} t_{m_1} & & \\ & t_{m_2} & \\ & & t_{m_2}t_{m_3}^{-1}t_{m_4} \end{pmatrix} \right).
			\]
			The morphism associated with $(m_1,\ldots,m_4)$ is $W$-conjugate to the morphism associated with $(m_4,\ldots,m_1)$. Up to this symmetry, there are 12 such quadruples, namely
			\[
				abab,\ abac,\ abca,\ abcb,\ acab,\ acac,\ acbc,\ babc,\ bacb,\ bcac,\ bcbc,\ cabc.
			\]
			The associated morphisms are pairwise not $W$-conjugate.
			For all such $\rho$, $G_\rho = T$ holds. The subspaces $V_\rho$ are all four-dimensional. For instance for $m_1\ldots m_4 = abac$, we get 
			\[
				V_\rho = \left\{ \left( \begin{pmatrix} * & \phantom{*} \\ \phantom{*} & * \\ \phantom{*} & \phantom{*} \end{pmatrix}, \begin{pmatrix} \phantom{*} & \phantom{*} \\ * & \phantom{*} \\ \phantom{*} & \phantom{*} \end{pmatrix}, \begin{pmatrix} \phantom{*} & \phantom{*} \\ \phantom{*} & \phantom{*} \\ \phantom{*} & * \end{pmatrix} 
			   \right) \right\}.
			\]
			In all of the 12 cases, the fixed point component $F_\rho$ consists of a single point.
			\item For $m_1,\ldots,m_4 \in \{a,b,c\}$ such that $\{m_1,m_2,m_3\} = \{a,b,c\}$ and $m_4 \neq m_3$, let $\rho\colon  \mathcal{T} \to T$ be defined by
			\[
				\rho(t_a,t_b,t_c) = \left( \begin{pmatrix} 1 & \\ & t_{m_3}t_{m_4}^{-1} \end{pmatrix}, \begin{pmatrix} t_{m_1} & & \\ & t_{m_2} & \\ & & t_{m_3} \end{pmatrix} \right).
			\]
			The morphisms obtained from $(m_1,m_2,m_3,m_4)$ and $(m_2,m_1,m_3,m_4)$ are $W$-conjugate. Up to this symmetry we are left with the following six quadruples (which lead to pairwise non-$W$-conjugate morphisms):
			\[
				abca,\ abcb,\ acba,\ acbc,\ bcab,\ bcac.
			\]
			For all of those morphisms, $G_\rho = T$ holds. The subspaces $V_\rho$ are four-dimensional. For example for $abcb$, it looks as follows:
			\[
				V_\rho = \left\{ \left( \begin{pmatrix} * & \phantom{*} \\ \phantom{*} & \phantom{*} \\ \phantom{*} & \phantom{*} \end{pmatrix}, \begin{pmatrix} \phantom{*} & \phantom{*} \\ * & \phantom{*} \\ \phantom{*} & * \end{pmatrix}, \begin{pmatrix} \phantom{*} & \phantom{*} \\ \phantom{*} & \phantom{*} \\ * & \phantom{*} \end{pmatrix} 
			   \right) \right\}.
			\]
			However, although $\mathcal{N}_T(V_\rho) = X^*(T)_\Q$, the fixed point component for this morphism $\rho$, and all others of the third type, is empty because $V_\rho$ does not intersect $V^{\st}(G,\theta)$. Indeed, for $abcb$ the span $\langle Ae_2,Be_2,Ce_2 \rangle$ is one-dimensional for every $(A,B,C) \in V_\rho$.
		\end{enumerate}
		The above considerations show that $\mathcal{T}$ acts on $V^{\st}(G,\theta)/G$ with 13 isolated fixed points.
	\end{ex}

	\section{Application: Torus quotients} \label{s:toric}
	
	We are going to discuss quotients $V^{\st}(G,\theta)/G$ in the case when $G$ is a torus. In this case, $V^{\st}(G,\theta)/G$ is a toric variety.
	We will first describe the toric structure of $V^{\st}(G,\theta)/G$. We believe this should be well known, but we could not find it in the literature in the form we need it for our purposes. As a reference for toric geometry we use Fulton's book \cite{Fulton:93}.

	Let $V$ be a vector space of dimension $m$ and $G = (\C^\times)^r$ a torus which we assume to act linearly on~$V$. Let $a\colon  G \to \GL(V)$ be the induced morphism of algebraic groups. Decompose $V$ into weight spaces $V = \smash{\bigoplus_{\chi \in X^*(G)}} V_\chi$. Let $\chi_1,\ldots,\chi_p$ be the characters which occur in this decomposition, and let $e_{s,1},\ldots,e_{s,m_s}$ be a basis of $V_{\chi_s}$ for $s=1,\ldots,p$. Let $T$ be the maximal torus of $\GL(V)$ which corresponds to the basis $(e_{1,1},\ldots,e_{1,m_1},\ldots,e_{p,1},\ldots,e_{p,m_p})$ of $V$. Then $a$ factors through a morphism of tori $G \to T$, which we also call~$a$. Concretely, 
	\[
		a(g) = (\underbrace{\chi_1(g),\ldots,\chi_1(g)}_{m_1 \text{ times}},\ldots,\underbrace{\chi_p(g),\ldots,\chi_p(g)}_{m_p \text{ times}}).
	\]
	Let $x_{s,k} \in X^*(T)$ be the respective coordinate function.
	
	Let $\theta \in X^*(G)$. We describe the semi-stable and the stable locus with respect to $\theta$.
	Let 
	\[
		I = \{(s,k) \mid s \in \{1,\ldots,p\},\ k \in \{1,\ldots,m_s\} \}. 
	\]
	Write a vector $v \in V$ as $\smash{\sum_{(s,k) \in I}} v_{s,k}e_{s,k}$. We define $\supp(v) = \{(s,k) \in I \mid v_{s,k} \neq 0\}$. Let $\eta\colon  \C^\times  \to G$ be a one-parameter subgroup. Then
	\[
		\eta(z)v = \sum_{(s,k) \in I} z^{\langle \chi_s,\eta \rangle} v_{s,k}e_{s,k},
	\]
	so the limit $\lim_{z \to 0} \eta(z)v$ exists if and only if $\langle \eta,\chi_s \rangle \geq 0$ for all $(s,k) \in \supp(v)$. This is a condition which depends just on the support of $v$, so $\theta$-semi-stability and $\theta$-stability do as well. Let us be more precise. Let $\pr\colon  I \to \{1,\ldots,p\}$ be the projection to the first coordinate. For $S \sub I$, we define
	\[
		\tau_S := \cone(\chi_s)_{s \in \pr(S)} \sub X^*(G)_\R.
	\]
	It is a rational convex polyhedral cone. Its dual 
	\begin{align*}
		\tau_S^\vee &= \{y \in X_*(G)_\R \mid \langle x,y \rangle \geq 0 \text{ (all $x \in \tau_S$)} \} \\
			&= \{ y \in X_*(G)_\R \mid \langle \chi_s,y \rangle \geq 0 \text{ (all $s \in \pr(S)$)}\}
	\end{align*}
	is again a rational convex polyhedral cone. Denote by $\theta^\vee$ the dual of the ray through $\theta$, and let $\theta^+ = \{ y \in X_*(G) \mid \langle \theta,y \rangle > 0 \}$. The Hilbert--Mumford criterion tells us the following. 
	
	\begin{lem} \label{l:stability_support}
		Let $v \in V$ and $S := \supp(v)$.
		\begin{enumerate}
			\item The vector $v$ is $\theta$-semi-stable if and only if\, $\tau_S^\vee \sub \theta^\vee$.
			\item The vector $v$ is $\theta$-stable if and only if\, $\tau_S^\vee \setminus \{0\} \sub \theta^+$.
		\end{enumerate}
	\end{lem}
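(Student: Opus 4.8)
The plan is to read off both statements directly from the Hilbert--Mumford criterion (in the form recalled in Section~\ref{s:setup}) together with the computation immediately preceding the lemma, and then to upgrade the resulting conditions on lattice one-parameter subgroups to conditions on the real cones $\tau_S^\vee$ and $\theta^\vee$.

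First I would record the basic dictionary: for a one-parameter subgroup $\eta \in X_*(G)$, the formula $\eta(z)v = \sum_{(s,k)\in I} z^{\langle\chi_s,\eta\rangle} v_{s,k} e_{s,k}$ shows that $\lim_{z\to0}\eta(z)v$ exists precisely when $\langle\chi_s,\eta\rangle\ge0$ for all $s\in\pr(S)$, that is, precisely when $\eta\in\tau_S^\vee$. Hence $\{\eta\in X_*(G) : \lim_{z\to0}\eta(z)v \text{ exists}\} = \tau_S^\vee\cap X_*(G)$. For part (1), the Hilbert--Mumford criterion then says that $v$ is $\theta$-semi-stable if and only if $\langle\theta,\eta\rangle\ge0$ for every such $\eta$, i.e.\ if and only if $\tau_S^\vee\cap X_*(G)\subseteq\theta^\vee$. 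Since $\tau_S^\vee$ is a rational convex polyhedral cone it is the $\R_{\ge0}$-span of its lattice points, and $\theta^\vee$ is a closed half-space (in particular stable under nonnegative combinations of its members), so I can replace this lattice containment by the containment $\tau_S^\vee\subseteq\theta^\vee$ of real cones, which is the claim.

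For part (2), the Hilbert--Mumford criterion gives that $v$ is $\theta$-stable if and only if $\langle\theta,\eta\rangle>0$ for every nonzero $\eta\in\tau_S^\vee\cap X_*(G)$, and from this I would deduce the stated inclusion $\tau_S^\vee\setminus\{0\}\subseteq\theta^+$, equivalently $\tau_S^\vee\cap\theta^\perp=\{0\}$. One implication is immediate. For the other I would argue by contradiction: if some nonzero $y\in\tau_S^\vee$ satisfied $\langle\theta,y\rangle\le0$, then $\tau_S^\vee\cap\{y:\langle\theta,y\rangle\le0\}$ would be a nonzero intersection of rational polyhedral cones, hence would contain a nonzero integral point $\eta\in\tau_S^\vee\cap X_*(G)$ with $\langle\theta,\eta\rangle\le0$, contradicting $\theta$-stability.

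I do not expect any serious obstacle. The only step that needs a moment's care is this lattice-to-real passage in part (2): one must use that both $\tau_S^\vee$ and the half-space $\{\langle\theta,-\rangle\le0\}$ are rational, so that a hypothetical real destabilizing direction forces the existence of an honest integral one; the analogous point in part (1) is even easier because $\theta^\vee$ is already a convex cone.
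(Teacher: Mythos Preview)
Your proposal is correct and follows the same approach as the paper, which simply states that the lemma is what ``the Hilbert--Mumford criterion tells us'' without giving any further argument. Your additional care in passing from the lattice condition $\tau_S^\vee\cap X_*(G)\subseteq\theta^\vee$ (resp.\ $\theta^+$) to the real containment via rationality of the cones is a legitimate point that the paper leaves implicit.
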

	
	We say a subset $S \sub I$ is $\theta$-\emph{semi-stable} if $\tau_S^\vee \sub \theta^\vee$, and we say $S$ is $\theta$-\emph{stable} if $\tau_S^\vee \setminus \{0\} \sub \theta^+$. The notions of (semi\nobreakdash-)stability depend only on $\pr(S)$. 
	
	\begin{lem} \label{l:stability_span}
		If $S \sub I$ is $\theta$-stable, then $(\chi_s)_{(s,k) \in S}$ spans $X^*(G)_\R$.
	\end{lem}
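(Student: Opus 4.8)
The plan is to argue by contraposition using the perfect pairing between $X^*(G)_\R$ and $X_*(G)_\R$. First I would record that the span of $(\chi_s)_{(s,k)\in S}$ depends only on $\pr(S)$, since each $\chi_s$ with $s \in \pr(S)$ appears among the $(\chi_s)_{(s,k)\in S}$ (possibly several times, which is irrelevant). So the assertion to prove is that the linear subspace $U := \sum_{s\in\pr(S)} \R\chi_s$ of $X^*(G)_\R$ is everything.

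Suppose instead that $U \subsetneq X^*(G)_\R$. Because the natural pairing $X^*(G)_\R \times X_*(G)_\R \to \R$ is a perfect pairing of finite-dimensional real vector spaces, the annihilator $U^{\perp} = \{ y \in X_*(G)_\R \mid \langle x,y\rangle = 0 \text{ for all } x \in U \}$ has dimension $\dim X_*(G)_\R - \dim U > 0$; pick a nonzero $y \in U^\perp$. Then $\langle \chi_s, y\rangle = 0$ for every $s \in \pr(S)$, so by the description of $\tau_S^\vee$ recalled just before the lemma, both $y$ and $-y$ lie in $\tau_S^\vee$. Since $y \neq 0$, the $\theta$-stability hypothesis $\tau_S^\vee \setminus \{0\} \subseteq \theta^+$ applied to $y$ and to $-y$ yields $\langle \theta, y\rangle > 0$ and $\langle \theta, -y\rangle > 0$, i.e.\ $\langle\theta,y\rangle > 0$ and $\langle\theta,y\rangle < 0$. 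This contradiction forces $U = X^*(G)_\R$, which is the claim.

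There is no real obstacle here: the argument is simply the observation that a sharp cone — such as the dual $\tau_S^\vee$ of the full-dimensional cone it would have to be dual to — cannot contain a line, translated into the language of the pairing. The only points to be slightly careful about are the bookkeeping between $S$ and $\pr(S)$ (already handled above) and the fact that the relevant perfect pairing is the one over $\R$, so that a proper subspace $U$ genuinely has a nonzero annihilator; both are routine.
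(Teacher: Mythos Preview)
Your argument is correct and is essentially the same as the paper's: assume the span $U$ is proper, pick a nonzero annihilator $y$, observe that both $y$ and $-y$ lie in $\tau_S^\vee\setminus\{0\}\subseteq\theta^+$, and derive the contradiction $\langle\theta,y\rangle>0>\langle\theta,y\rangle$. The only differences are cosmetic (you spell out the dependence on $\pr(S)$ and the perfect pairing, and take $y$ in $X_*(G)_\R$ rather than in the lattice), none of which affect the logic.
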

	
	\begin{proof}
		Assume that the $\R$-linear span $U$ of $(\chi_s)_{(s,k) \in S}$ is a proper subspace of $X^*(G)_\R$. Then there  exists a  $y \in X_*(G)\setminus \{0\}$ such that $\langle x,y \rangle = 0$ for all $x \in U$. Therefore, both $y$ and $-y$ lie in $\tau_S^\vee \setminus \{0\} \sub \theta^+$, which implies $\langle \theta,y \rangle > 0$ and $\langle \theta,-y \rangle > 0$, giving a contradiction.
	\end{proof}

	We call $S$ \emph{minimally} $\theta$-\emph{stable} if $S$ is $\theta$-stable and if no proper subset of $S$ is $\theta$-stable.
	
	\begin{lem} \label{l:minimal_stability}
		For a $\theta$-stable subset $S \sub I$, the following are equivalent:
		\begin{enumerate}
			\item The subset $S$ is minimally $\theta$-stable. 
			\item We have $\left|S\right| = r$. 
			\item The set $(\chi_s)_{(s,k) \in S}$ is a basis of\, $X^*(G)_\R$.
		\end{enumerate}
	\end{lem}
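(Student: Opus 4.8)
The plan is to prove the equivalences via $(2)\Leftrightarrow(3)$, $(3)\Rightarrow(1)$, and $(1)\Rightarrow(3)$; the first two are short and rest entirely on Lemma~\ref{l:stability_span}. Since $S$ is $\theta$-stable, that lemma gives that $(\chi_s)_{(s,k)\in S}$ spans the $r$-dimensional space $X^*(G)_\R$. A family containing a repeated member spans a space of dimension at most $r-1$, so the vectors occurring here are pairwise distinct, $\pr|_S$ is injective, and $|S|$ equals the number of vectors in the family. A spanning family of size $r$ in an $r$-dimensional space is a basis, and a basis has exactly $r$ members, which proves $(2)\Leftrightarrow(3)$. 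For $(3)\Rightarrow(1)$: if $(\chi_s)_{(s,k)\in S}$ is a basis and $S'\subsetneq S$, then by the distinctness just noted $(\chi_s)_{(s,k)\in S'}$ has fewer than $r$ members, hence spans a proper subspace of $X^*(G)_\R$; by the contrapositive of Lemma~\ref{l:stability_span} the set $S'$ is not $\theta$-stable, so $S$ is minimally $\theta$-stable.

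For $(1)\Rightarrow(3)$, assume $S$ is minimally $\theta$-stable; I would argue in three reductions. First, $\pr|_S$ is injective: if $(s,k)\ne(s,k')$ both lie in $S$, then $\pr$ identifies $S\setminus\{(s,k')\}$ with $S$, and since $\theta$-(semi)stability of a subset depends only on its image under $\pr$ (equivalently only on the cone $\tau_S$), the proper subset $S\setminus\{(s,k')\}$ is again $\theta$-stable, contradicting minimality. Second, no $\chi_{s_0}$ with $(s_0,k_0)\in S$ lies in the cone generated by $\{\chi_s\mid (s,k)\in S,\ s\ne s_0\}$: otherwise $\tau_{S\setminus\{(s_0,k_0)\}}=\tau_S$, so that $S\setminus\{(s_0,k_0)\}$ would again be $\theta$-stable. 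Hence $\tau_S$ is a pointed cone whose extreme rays are exactly the $\R_{\ge 0}\chi_s$ for $(s,k)\in S$, and it is full-dimensional by Lemma~\ref{l:stability_span}. Third, the defining inclusion $\tau_S^\vee\setminus\{0\}\subseteq\theta^+$, together with the fact that $\tau_S^\vee$ is pointed (as $\tau_S$ is full-dimensional), says precisely that $\theta$ lies in the interior of $\tau_S=(\tau_S^\vee)^\vee$. It now suffices to see that the $\chi_s$, $(s,k)\in S$, are linearly independent, as this together with the spanning property makes them a basis. For this, apply Carathéodory's theorem to the point $\theta\in\tau_S$: there is a subset $S'\subseteq S$ with $(\chi_s)_{(s,k)\in S'}$ linearly independent and $\theta\in\tau_{S'}$; arranging that $\theta$ in fact lies in the \emph{interior} of $\tau_{S'}$, the cone $\tau_{S'}$ is full-dimensional, $(\chi_s)_{(s,k)\in S'}$ is a basis, and $S'$ is $\theta$-stable. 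By minimality $S'=S$, so $(\chi_s)_{(s,k)\in S}$ is a basis.

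The step I expect to be the main obstacle is the passage, in the last reduction, from ``$\theta\in\tau_{S'}$'' to ``$\theta\in\operatorname{int}(\tau_{S'})$''. This is a Carathéodory-type statement about interior points of a polyhedral cone, and it needs care: deforming $\theta$ along a linear relation among the $\chi_s$ can annihilate several coefficients simultaneously, leaving $\theta$ in a lower-dimensional face rather than in the interior of a simplicial subcone. One clean way around this is to use that for a generic stability parameter $\theta$ (e.g.\ in the interior of a GIT chamber, equivalently when every $\theta$-semistable point is $\theta$-stable) the conditions ``$\theta\in\tau_{S'}$'' and ``$\theta\in\operatorname{int}(\tau_{S'})$'' coincide for all subsets $S'$, so that the plain Carathéodory theorem already suffices; this is where the position of $\theta$ relative to the weight cone genuinely enters the argument.
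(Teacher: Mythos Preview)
Your equivalence $(2)\Leftrightarrow(3)$ and the implication $(3)\Rightarrow(1)$ are fine, apart from one slip: the sentence ``a family containing a repeated member spans a space of dimension at most $r-1$'' is false as written (a spanning family may well have repeats). It is also unnecessary: for $(2)\Leftrightarrow(3)$ you only need that a spanning family of size $r$ in an $r$-dimensional space is a basis, and in $(3)\Rightarrow(1)$ distinctness follows from being a basis.

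For $(1)\Rightarrow(3)$ the paper's route is much shorter than yours. It argues: if $(\chi_s)_{(s,k)\in S}$ is linearly dependent, pick $(s',k')$ with $\chi_{s'}$ in the span of the remaining $\chi_s$; then $\tau_{S'}^\vee=\tau_S^\vee$ for $S'=S\setminus\{(s',k')\}$, so $S'$ is again $\theta$-stable, contradicting minimality. No Carath\'eodory, no interior argument. Your step~2, that no $\chi_{s_0}$ lies in the \emph{cone} of the others, is the correct version of this reduction; but you then overreach by concluding that $\tau_S$ is pointed with the $\R_{\ge 0}\chi_s$ as extreme rays, which does not follow.

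Your worry about the last step is not a technicality but a genuine obstruction, and it bites the paper's argument too: ``$\chi_{s'}$ in the span of the others'' does \emph{not} give $\tau_{S'}^\vee=\tau_S^\vee$ (one needs $\chi_{s'}\in\tau_{S'}$ for that), and in fact $(1)\Rightarrow(3)$ fails without a genericity hypothesis on $\theta$. Take $r=2$, characters $\chi_1=(1,0)$, $\chi_2=(-1,0)$, $\chi_3=(0,1)$, each with multiplicity one, and $\theta=(0,1)$. Then $\tau_I^\vee=\{(0,b):b\ge 0\}$, so $I$ is $\theta$-stable, but for every two-element subset $S'\subset I$ the cone $\tau_{S'}^\vee$ contains a nonzero vector with $\langle\theta,\cdot\rangle\le 0$, so no proper subset is $\theta$-stable. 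Hence $I$ is minimally $\theta$-stable with $|I|=3\ne r$. (Here $V^{\st}(G,\theta)=(\C^\times)^3$ is nonempty and $G$ acts freely on it, so the standing hypotheses of the section are satisfied.) In this same example no $\chi_i$ lies in the cone of the other two, yet $\tau_I$ is the closed upper half-plane and hence not pointed, which is exactly where your step~2 conclusion breaks. So your proposed fix, restricting to $\theta$ off the walls, is the right move: under that assumption your Carath\'eodory argument goes through, and so does the paper's once ``span'' is read as ``cone''. The later applications in the paper only require the direction $(2)\Rightarrow(1)$, which is unaffected.
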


	\begin{proof}
		Let us assume $S$ is minimally stable, and let us show that $(\chi_s)_{(s,k) \in S}$ is a basis of $X^*(G)_\R$. By Lemma~\ref{l:stability_span}, it is enough to prove linear independence. Assume there are $(s',k') \in S$ such that $\chi_{s'}$ lies in the span of the $\chi_s$ with $(s,k) \in S' := S \setminus \{(s',k')\}$. Then $\tau_{S'}^\vee = \tau_S^\vee$, and therefore $S'$ is also stable. This contradicts the minimality of $S$.
		
		Now let $(\chi_s)_{(s,k) \in S}$ be a basis of $X^*(G)_\R$. As $\dim X(G)_\R = \dim G = r$, it is obvious that $\left|S\right| = r$.
		
		Finally, if $\left|S\right| = r$, then there can be no proper $\theta$-stable subset $S'$ of $S$ because the span of $(\chi_s)_{(s,k) \in S'}$ is a proper subspace of $X^*(G)_\R$.
	\end{proof}

	Lemma~\ref{l:stability_support} shows in particular that $V^{\sst}(G,\theta)$ and $V^{\st}(G,\theta)$ are $T$-invariant. The torus $T$ acts with a dense open orbit, isomorphic to $T$, on $V^{\st}(G,\theta)$ and $V^{\sst}(G,\theta)$, provided that these loci are non-empty. We obtain an action of $T$ on the semi-stable and stable quotient.
	
	We are going to assume in the following that $V^{\st}(G,\theta)$ is non-empty and that $G$ acts freely on the stable locus. Assumption~\ref{a:general_assumption} is hence fulfilled.
	
	Let $\mathcal{T}$ be the cokernel of $a\colon  G \to T$. It is a torus of rank $\dim V - \dim G$. We obtain a short exact sequence
	\[
		1 \lra G \xto{}{\;a\;} T \xto{}{\;\pi\;} \mathcal{T} \lra 1;
	\]
	note that Assumption~\ref{a:general_assumption} implies that $a$ is injective.
	The $T$-action on $V^{\sst}(G,\theta)/\!\!/G$ and on $V^{\st}(G,\theta)/G$ descends to an action of $\mathcal{T}$. The torus $\mathcal{T}$ acts on both with a dense open orbit isomorphic to $\mathcal{T}$. The quotients are therefore toric varieties. Let us describe the toric fan of $V^{\st}(G,\theta)/G$. 
	Let $M := X^*(\mathcal{T})$ and $N := X_*(\mathcal{T})$. Consider the short exact sequence of the lattices of one-parameter subgroups
	\[
		0 \lra X_*(G) \xto{}{\;a_*\;} X_*(T) \xto{}{\;\pi_*\;} N \lra 0.
	\]
	We have $X_*(T) = \bigoplus_{(s,k) \in I} \Z\epsilon_{s,k}$, where $\epsilon_{s,k}\colon  \C^\times \to T$ is defined by 
	\[
	\epsilon_{s,k}(z)e_{t,l} =
        \begin{cases} ze_{s,k} & \text{if } (s,k) = (t,l), \\
          e_{t,l} & \text{otherwise.} \end{cases}
	\]
	Note that the map $a_*\colon X_*(G) \to X_*(T)$ is given by $a_*(\eta) = \smash{\sum_{(s,k) \in I}} \langle \chi_s,\eta \rangle \epsilon_{s,k}$; we write $X_*(T)$ additively here. For a subset $J \sub I$, we define $\sigma_J := \cone(\pi_*\epsilon_{s,k})_{(s,k) \in J}$, a rational convex polyhedral cone in $N_\R$. We define
	\begin{align*}
		\Phi := \Phi^{\st}(\theta) &:= \{J \sub I \mid \tau_{J^c}^\vee \setminus \{0 \} \sub \theta^+ \} = \{J \sub I \mid J^c \text{ is $\theta$-stable}\}, \\
		\Delta := \Delta^{\st}(\theta) &:= \{ \sigma_J \mid J \in \Phi \}.
	\end{align*}
	In the above equation, $J^c = I \setminus J$.
	
	\begin{lem} \label{l:fan}
		Let $J \in \Phi$.
		\begin{enumerate}
			\item\label{l:fan-1} The elements $\pi_*\epsilon_{s,k}$ with $(s,k) \in J$ are linearly independent over $\R$. Therefore, $\sigma_J$ is a simplex of dimension~$\left|J\right|$.
			\item\label{l:fan-2} If\, $J' \sub J$, then $J' \in \Phi$ and $\sigma_{J'}$ is a face of $\sigma_J$. Every face of $\sigma_J$ is of this form.
			\item\label{l:fan-3} For $J_1,J_2 \in \Phi$, we get $\sigma_{J_1} \cap \sigma_{J_2} = \sigma_{J_1 \cap J_2}$.
		\end{enumerate}
	\end{lem}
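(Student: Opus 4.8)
The plan is to translate all three statements into the $\R$-linearized short exact sequence $0\to X_*(G)_\R\to X_*(T)_\R\to N_\R\to 0$ (with maps $a_*$, $\pi_*$; it is exact, being the $\R$-extension of the displayed sequence of lattices) and then to feed in Lemma~\ref{l:stability_span}. The common mechanism I will use is this: since $\ker\pi_*=\im a_*$ and $a_*(\eta)=\sum_{(s,k)\in I}\langle\chi_s,\eta\rangle\epsilon_{s,k}$, given real numbers $c_{s,k}$ with $c_{s,k}=0$ for $(s,k)$ outside a chosen index set, the relation $\sum_{(s,k)}c_{s,k}\pi_*\epsilon_{s,k}=0$ holds in $N_\R$ if and only if there is an $\eta\in X_*(G)_\R$ with $\langle\chi_s,\eta\rangle=c_{s,k}$ for every $(s,k)$ in the chosen index set and $\langle\chi_s,\eta\rangle=0$ for all remaining $(s,k)$.

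For \eqref{l:fan-1}, I would apply this mechanism to a vanishing $\R$-linear combination of the $\pi_*\epsilon_{s,k}$ with $(s,k)\in J$; it produces $\eta\in X_*(G)_\R$ with $\langle\chi_s,\eta\rangle=0$ for every $(s,k)\in J^c$. Because $J\in\Phi$, the subset $J^c$ is $\theta$-stable, so Lemma~\ref{l:stability_span} says $(\chi_s)_{(s,k)\in J^c}$ spans $X^*(G)_\R$; hence $\eta$ pairs to zero against every element of $X^*(G)_\R$, so $\eta=0$ by non-degeneracy of the pairing, and therefore all $c_{s,k}=\langle\chi_s,\eta\rangle=0$. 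This is precisely $\R$-linear independence of the $\pi_*\epsilon_{s,k}$, $(s,k)\in J$, which in turn means $\sigma_J$ is a simplicial cone of dimension $\left|J\right|$.

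For \eqref{l:fan-2}, I would first note that $S\sub S'$ gives $\tau_S\sub\tau_{S'}$, hence $\tau_{S'}^\vee\sub\tau_S^\vee$, so any superset of a $\theta$-stable subset of $I$ is again $\theta$-stable; applied to $J'^c\supseteq J^c$ this yields $J'\in\Phi$. By \eqref{l:fan-1}, $\sigma_J$ is the cone on a linearly independent family of vectors, hence isomorphic as a cone to the standard orthant $\R_{\geq 0}^{\left|J\right|}$, whose faces are exactly the subcones spanned by subfamilies of the generators; these are precisely the $\sigma_{J'}$ with $J'\sub J$, which settles both directions of the face claim.

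The real content is \eqref{l:fan-3}. The inclusion $\sigma_{J_1\cap J_2}\sub\sigma_{J_1}\cap\sigma_{J_2}$ is immediate. For the reverse, I take $x\in\sigma_{J_1}\cap\sigma_{J_2}$ and write $x=\sum_{(s,k)\in J_1}a_{s,k}\pi_*\epsilon_{s,k}=\sum_{(s,k)\in J_2}b_{s,k}\pi_*\epsilon_{s,k}$ with all $a_{s,k},b_{s,k}\geq0$; subtracting produces a relation supported on $J_1\cup J_2$ with coefficient $a_{s,k}$ on $J_1\setminus J_2$, $-b_{s,k}$ on $J_2\setminus J_1$, and $a_{s,k}-b_{s,k}$ on $J_1\cap J_2$. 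The mechanism above then gives $\eta\in X_*(G)_\R$ realizing these numbers as $\langle\chi_s,\eta\rangle$, with $\langle\chi_s,\eta\rangle=0$ off $J_1\cup J_2$. A short case check on signs shows $\langle\chi_s,\eta\rangle\leq0$ for all $(s,k)\in J_1^c$ and $\langle\chi_s,\eta\rangle\geq0$ for all $(s,k)\in J_2^c$, i.e.\ $-\eta\in\tau_{J_1^c}^\vee$ and $\eta\in\tau_{J_2^c}^\vee$. If $\eta\neq0$, then $\theta$-stability of $J_1^c$ and of $J_2^c$ would force $\langle\theta,-\eta\rangle>0$ and $\langle\theta,\eta\rangle>0$ simultaneously, which is absurd; so $\eta=0$. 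Consequently $a_{s,k}=b_{s,k}$ on $J_1\cap J_2$, $a_{s,k}=0$ on $J_1\setminus J_2$, and $b_{s,k}=0$ on $J_2\setminus J_1$, so $x=\sum_{(s,k)\in J_1\cap J_2}a_{s,k}\pi_*\epsilon_{s,k}$ lies in $\sigma_{J_1\cap J_2}$. The main obstacle I anticipate is exactly this last argument: arranging the $\eta$-bookkeeping so that the stability of \emph{both} complements $J_1^c$ and $J_2^c$ can be brought to bear against each other; parts \eqref{l:fan-1} and \eqref{l:fan-2} are essentially formal once the exact-sequence dictionary is in place.
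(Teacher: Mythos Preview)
Your proposal is correct and follows essentially the same route as the paper: the exact-sequence mechanism you set up is precisely what the paper uses, and your arguments for \eqref{l:fan-2} and \eqref{l:fan-3} match the paper's almost verbatim. The only minor variation is in \eqref{l:fan-1}, where you invoke Lemma~\ref{l:stability_span} to conclude $\eta=0$ from the spanning of $(\chi_s)_{(s,k)\in J^c}$, while the paper instead observes directly that both $\eta$ and $-\eta$ lie in $\tau_{J^c}^\vee$ and derives the same contradiction $\langle\theta,\eta\rangle>0$, $\langle\theta,-\eta\rangle>0$ used in \eqref{l:fan-3}; your shortcut is a legitimate reuse of an already-proved lemma.
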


	\begin{proof}
			\eqref{l:fan-1}~ Assume there are $\lambda = \smash{\sum_{(s,k) \in J}} a_{s,k} \epsilon_{s,k} \in X_*(T)$ with $\lambda \neq 0$ such that $\pi_*\lambda = 0$. Then there exists an $\eta \in X_*(G) \setminus \{0\}$ such that $\lambda = a_*\eta$. For all $(s,k) \notin J$, we have 
			\[
				\langle \chi_s,\eta \rangle = \langle a^*x_{s,k},\eta \rangle = \langle x_{s,k},\lambda \rangle = 0
			\]
			because $\lambda$ is supported on $J$. Therefore, $\langle \chi_s,\eta \rangle = 0$ for all $s \in \pr(J^c)$. In particular, $\eta \in \tau_{J^c}^\vee$. As $J \in \Phi$, we obtain $\langle \theta,\eta \rangle > 0$. But by the same argument, we also obtain $-\eta \in \tau_{J^c}^\vee$, which  implies $\langle \theta,-\eta \rangle > 0$, giving a contradiction.

                        \eqref{l:fan-2}~ The cone $\sigma_{J'}$ is the intersection of $\sigma_J$ with the supporting hyperplanes $(\pi_*\epsilon_{s,k})^\bot$, where $(s,k)$ ranges over $J\setminus J'$. Now let $\sigma$ be a face of $\sigma_J$. Then there exists a $u \in \sigma_J^\vee$ such that $\sigma = \sigma_J \cap u^\bot$. It is then easy to see that $\sigma = \sigma_{J'}$, where $J' = J \cap \supp(\pi^*u)$.
                        
			\eqref{l:fan-3}~ Assume there are
			\[
				\lambda_1 = \sum_{(s,k) \in J_1} a_{s,k}\epsilon_{s,k}, \quad \lambda_2 = \sum_{(s,k) \in J_2} b_{s,k}\epsilon_{s,k}
			\]
			with $a_{s,k},b_{s,k} \in \R_{\geq 0}$ such that $\lambda_1 \neq \lambda_2$ but $\pi_*\lambda_1 = \pi_*\lambda_2$. Then there  exists an $\eta \in X_*(G)$ such that $a_*\eta = \lambda_1-\lambda_2$. For all $(s,k) \in J_2^c$, we then have
			\[
				\langle \chi_s,\eta \rangle = \langle x_{s,k},\lambda_1-\lambda_2 \rangle = a_{s,k}-b_{s,k} = a_{s,k} \geq 0.
			\]
			As $J_2 \in \Phi$, this  implies $\langle \theta,\eta \rangle > 0$. But the same argument shows that we also have $\langle \theta,-\eta \rangle > 0$. We therefore again have a contradiction. 
	\end{proof}
	
	\begin{prop} \label{p:toric_fan_quotient}
		The toric fan of\, $V^{\st}(G,\theta)/G$ is $\Delta^{\st}(\theta)$.
	\end{prop}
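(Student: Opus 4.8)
The plan is to realize $V^{\st}(G,\theta)/G$ directly as the toric variety built from the cones of $\Delta^{\st}(\theta)$, by covering the stable locus by $T$-stable principal open affines and quotienting each by $G$. First I would record that $\theta$-stability of a subset $S\subseteq I$ is preserved under enlargement: for $S\subseteq S'$ one has $\tau_S\subseteq\tau_{S'}$, hence $\tau_{S'}^\vee\subseteq\tau_S^\vee$. Combined with Lemma~\ref{l:stability_support}, this shows that $v\in V^{\st}(G,\theta)$ if and only if $\supp(v)$ contains a minimally $\theta$-stable subset. Setting $U_S:=\{v\in V\mid S\subseteq\supp(v)\}\cong(\C^\times)^S\times\C^{S^c}$, we therefore get $V^{\st}(G,\theta)=\bigcup_S U_S$ over all minimally $\theta$-stable $S$; by Lemma~\ref{l:minimal_stability} these have $\left|S\right|=r$, so $\left|S^c\right|=\dim\mathcal{T}$ and the $\sigma_{S^c}$ are exactly the maximal cones of $\Delta^{\st}(\theta)$. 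Each $U_S$ is $G$-invariant (its defining conditions $x_{s,k}\neq 0$ are $G$-semi-invariant), hence $G$-saturated, so $\pi(U_S)=U_S/\!\!/G=\operatorname{Spec}\C[U_S]^G$ is an affine open of $V^{\st}(G,\theta)/G$, and these cover.

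Next I would compute $U_S/\!\!/G$ for an arbitrary $\theta$-stable $S$. With the convention $g\cdot e_{s,k}=\chi_s(g)e_{s,k}$, the coordinate $x_{s,k}$ has $G$-weight $-\chi_s$, so $\C[U_S]^G$ is the semigroup algebra of
\[
  M_S:=\bigl\{m\in X^*(T)\bigm| a^*(m)=0,\ \langle m,\epsilon_{s,k}\rangle\geq 0\text{ for all }(s,k)\in S^c\bigr\}.
\]
Applying $X^*(-)$ to $1\to G\xrightarrow{a}T\xrightarrow{\pi}\mathcal{T}\to 1$ identifies $\ker(a^*)$ with $\pi^*(M)$ (with $\pi^*$ injective); using $\langle\pi^*\bar m,\epsilon_{s,k}\rangle=\langle\bar m,\pi_*\epsilon_{s,k}\rangle$, this carries $M_S$ isomorphically onto $\sigma_{S^c}^\vee\cap M$, with $\sigma_{S^c}=\cone(\pi_*\epsilon_{s,k})_{(s,k)\in S^c}$. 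By Lemma~\ref{l:fan}\eqref{l:fan-1} the cone $\sigma_{S^c}$ is strongly convex, so $\sigma_{S^c}^\vee$ is full-dimensional and $M_S$ generates $M$ as a group; hence $\pi(U_S)=\operatorname{Spec}\C[\sigma_{S^c}^\vee\cap M]$ is precisely the affine toric variety $U_{\sigma_{S^c}}$ for the torus $\mathcal{T}$, the $\mathcal{T}$-action being the descent of the $T$-action on $\C[U_S]^G$, which factors through $\mathcal{T}$.

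Finally I would check that the gluing of these charts is the one prescribed by the fan. For minimally $\theta$-stable $S_1,S_2$ one has $U_{S_1}\cap U_{S_2}=U_{S_1\cup S_2}$ with $S_1\cup S_2$ again $\theta$-stable, so by the previous step $\pi(U_{S_1}\cap U_{S_2})\cong U_{\sigma_{(S_1\cup S_2)^c}}=U_{\sigma_{S_1^c}\cap\sigma_{S_2^c}}$, using $(S_1\cup S_2)^c=S_1^c\cap S_2^c$ and Lemma~\ref{l:fan}\eqref{l:fan-3}; on coordinate rings the resulting open immersion into $\pi(U_{S_1})$ is the localization of $\C[\sigma_{S_1^c}^\vee\cap M]$ at the face $\sigma_{S_1^c}\cap\sigma_{S_2^c}$ of $\sigma_{S_1^c}$ (a face by Lemma~\ref{l:fan}\eqref{l:fan-2}), i.e.\ the transition map of the toric variety associated with $\Delta^{\st}(\theta)$. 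Since every cone of $\Delta^{\st}(\theta)$ is a face of some $\sigma_{S^c}$ with $S$ minimally $\theta$-stable (if $J\in\Phi$, then $J^c$ is $\theta$-stable and hence contains a minimally $\theta$-stable $S$, so $J\subseteq S^c$), the cover $\{\pi(U_S)\}$ identifies $V^{\st}(G,\theta)/G$ with the toric variety of the fan $\Delta^{\st}(\theta)$. I expect the main obstacle to be the middle step: pinning down $U_S/\!\!/G$ as a toric variety \emph{for $\mathcal{T}$} — showing both that the invariant ring is the \emph{full} semigroup ring $\C[\sigma_{S^c}^\vee\cap M]$ and that this semigroup generates $M$ (which is exactly where strong convexity of $\sigma_{S^c}$, via Lemma~\ref{l:fan}\eqref{l:fan-1}, enters) — together with the bookkeeping in the last step that identifies the gluing maps with the face-localizations of the fan.
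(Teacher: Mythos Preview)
Your argument is correct and follows essentially the same route as the paper's proof: cover the stable locus by the affine opens $U_S=\{v\mid S\subseteq\supp(v)\}$ (the paper writes these as $U_{\sigma_J^+}$ with $J=S^c$), compute $\C[U_S]^G$ as the semigroup algebra $\C[\sigma_{S^c}^\vee\cap M]$ via the identification $\ker(a^*)=\pi^*(M)$, and glue using Lemma~\ref{l:fan}. The only cosmetic difference is that you index the cover by minimally $\theta$-stable $S$ (equivalently, maximal cones of $\Delta^{\st}(\theta)$), whereas the paper runs over all $J\in\Phi$; since every cone is a face of a maximal one, this changes nothing, and your version has the small advantage of making the equality $V^{\st}(G,\theta)=\bigcup_S U_S$ explicit in both directions rather than asserting it.
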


	\begin{proof}
		For $J \in \Phi$, let us define $\sigma_J^+ = \smash{\cone(\epsilon_{s,k})_{(s,k) \in J}}$. The affine toric variety associated with the simplex $\smash{\sigma_J^+}$ is
		\[
			U_{\sigma_J^+} = \{ v \in V \mid J^c \sub \supp(v) \}.
		\]
		For $v \in \smash{U_{\sigma_J^+}}$ with $S := \supp(v)$, we get $\tau_S^\vee \setminus \{0\} \sub \tau_{J^c}^\vee \setminus \{0 \} \sub \theta^+$, so $v$ is $\theta$-stable by Lemma~\ref{l:stability_support}.
		We define $\Delta^+ = \{\sigma_J^+ \mid J \in \Phi \}$.
		We have argued that the toric variety $X_{\Delta^+}$ associated with the fan $\Delta^+$ is the stable locus $V^{\st}(G,\theta)$.
		It is enough to show that there is a map $\pi\colon  X_{\Delta^+} \to X_\Delta$ which is a categorical $G$-quotient. Consider the map
		\[
			\pi_J\colon  U_{\sigma_J^+} \lra U_{\sigma_J}
		\]
		induced by $\pi\colon  T \to \mathcal{T}$.
		By definition and an easy verification,
		\[
			U_{\sigma_J} = \Spec \C\left[M \cap \sigma_J^\vee\right] = \Spec \C\left[M \cap \left(\pi_*\sigma_J^+\right)^\vee\right] = \Spec \C\left[M \cap (\pi^*)^{-1}\left(\left(\sigma_J^+\right)^\vee\right)\right].
		\]
		The ring $\C[M \cap (\pi^*)^{-1}((\sigma_J^+)^\vee)]$ agrees with $\C[X^*(T) \cap (\sigma_J^+)^\vee]^G$, and therefore $\pi_J$ is a universal categorical $G$-quotient. By Lemma~\ref{l:fan}\eqref{l:fan-3}, the maps $\pi_J$ and $\pi_{J'}$ agree on the intersection and therefore glue to $\pi\colon  X_{\Delta^+} \to X_\Delta$ which, also by Lemma~\ref{l:fan}\eqref{l:fan-3}, is a categorical $G$-quotient. 
	\end{proof}

	We return to the action of $\mathcal{T}$. By general theory \cite[Section 3.1]{Fulton:93}, the torus $\mathcal{T}$ acts on $V^{\st}(G,\theta)/G = X_\Delta$ with finitely many orbits. The orbits are labeled by the faces of $\Delta$. In our case, the orbit corresponding to $\sigma_J$ is
	\begin{equation} \label{e:orbit} \tag{\#}
		O_{\sigma_J} = \{ \pi(v) \mid v \in V \text{ such that } \supp(v) = J^c \}/G.
	\end{equation}
	The dimension of $O_{\sigma_J}$ is $\dim V - \dim G - \dim \sigma_J = m - r - \left|J\right|$. The fixed points of the $\mathcal{T}\!$-action are therefore the orbits $O_{\sigma_J}$ associated with those $J \in \Phi$ for which $\left|J\right| = m-r$. By Lemma~\ref{l:minimal_stability}, a set $J$ with cardinality $m-r$ is maximal in $\Phi$, and every maximal member of $\Phi$ has cardinality $m-r$.
	
	We should like to apply our results to this setup and compare them with the description of the fixed points coming from the toric fan. In order to do so, we need $\mathcal{T}$ to act linearly on $V$. For this we choose, once and for all, a section $c\colon  \mathcal{T} \to T$ of $\pi\colon  T \to \mathcal{T}$. Then $\mathcal{T}$ acts on $V$ via $t.v = c(t)v$. This $\mathcal{T}\!$-action automatically commutes with the $G$-action, and the induced $\mathcal{T}\!$-action on $V^{\st}(G,\theta)/G$ agrees with the $\mathcal{T}\!$-action coming from the $T\!$-action on $V$.
	
	The group $G$ is already a torus. Let $\rho\colon  \mathcal{T} \to G$ be a morphism of algebraic groups. We get $G_\rho = G$ and
	\begin{align*}
		V_\rho &= \{ v \in V \mid t.v = \rho(t)\cdot v \text{ (all $t \in \mathcal{T}$)} \} \\
			&= \{ v \in V \mid c(t)v = a\rho(t)v \text{ (all $t \in \mathcal{T}$)} \}.
	\end{align*}
	For $(s,k) \in I$, let $c_{s,k}\colon  \mathcal{T} \to \C^\times$ be such that $c(t) = (c_{s,k}(t))_{(s,k) \in I} \in T = (\C^\times)^I$. Then $c(t)v = a\rho(t)v$ holds if and only if $c_{s,k}(t)v_{s,k} = \chi_s(\rho(t))v_{s,k}$ for all $(s,k) \in I$.
	Define the set $S_\rho \sub I$ as
	\[
		S_\rho = \{ (s,k) \in I \mid c_{s,k}(t) = \chi_s(\rho(t)) \text{ (all $t \in \mathcal{T}$)} \}.
	\]
	We see that $V_\rho^{\st}(G,\theta) \neq \emptyset$ if and only if $S_\rho$ is $\theta$-stable.
	
	\begin{prop} \label{p:bijection}
		Let $\rho\colon  \mathcal{T} \to G$ be a morphism of algebraic groups.
		\begin{enumerate}
			\item\label{p:bijection-1} If\, $S_\rho$ is $\theta$-stable, then $S_\rho$ is minimally $\theta$-stable.
			\item\label{p:bijection-2} If $\rho_1,\rho_2\colon  \mathcal{T} \to G$ are two morphisms of algebraic groups such that $S_{\rho_1} = S_{\rho_2}$, then $\rho_1 = \rho_2$.
			\item\label{p:bijection-3} For every minimally $\theta$-stable subset $S \sub I$, there exists a morphism $\rho\colon  \mathcal{T} \to G$ such that $S_\rho = S$.
		\end{enumerate}
	\end{prop}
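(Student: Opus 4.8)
The plan is to translate everything into the lattices of characters. For a morphism $\rho\colon\mathcal T\to G$, consider the homomorphism $\delta_\rho:=c\cdot(a\circ\rho)^{-1}\colon\mathcal T\to T$, the product being formed in the abelian group $T$. Since $\pi\circ a$ is the trivial homomorphism and $\pi\circ c=\operatorname{id}_{\mathcal T}$, we get $\pi\circ\delta_\rho=\operatorname{id}_{\mathcal T}$, so $\delta_\rho$ is a group-theoretic section of $\pi$; dually, $\delta_\rho^{*}=c^{*}-\rho^{*}\circ a^{*}\colon X^{*}(T)\to X^{*}(\mathcal T)$ is a retraction of $\pi^{*}$, hence in particular surjective. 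Because $x_{s,k}\circ a=\chi_s$ and $x_{s,k}\circ c=c_{s,k}$, we get $\delta_\rho^{*}(x_{s,k})=c_{s,k}-\chi_s\circ\rho$, so that
\[
	S_\rho=\{(s,k)\in I\mid\delta_\rho^{*}(x_{s,k})=0\}=\{(s,k)\in I\mid x_{s,k}\in\ker\delta_\rho^{*}\}.
\]
As $\delta_\rho^{*}$ surjects the rank-$m$ lattice $X^{*}(T)=\bigoplus_I\Z x_{s,k}$ onto the rank-$(m-r)$ lattice $X^{*}(\mathcal T)$, its kernel is free of rank $r$; the $x_{s,k}$ with $(s,k)\in S_\rho$ are pairwise distinct standard basis vectors contained in it, hence linearly independent, giving the unconditional bound $|S_\rho|\le r$.

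For (1): if $S_\rho$ is $\theta$-stable, Lemma~\ref{l:stability_span} shows $(\chi_s)_{(s,k)\in S_\rho}$ spans $X^{*}(G)_\R$; since distinct indices $s$ yield distinct $\chi_s$, spanning this $r$-dimensional space forces $|S_\rho|\ge|\pr(S_\rho)|\ge r$. Combined with the bound above, $|S_\rho|=r$, and Lemma~\ref{l:minimal_stability} then gives that $S_\rho$ is minimally $\theta$-stable.

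For (2): put $S:=S_{\rho_1}=S_{\rho_2}$. For every $(s,k)\in S$ we have $\rho_1^{*}(\chi_s)=c_{s,k}=\rho_2^{*}(\chi_s)$, so $\rho_1^{*}$ and $\rho_2^{*}$ agree on $\{\chi_s\mid(s,k)\in S\}$. Once $S$ is $\theta$-stable --- which is the case in the intended application, where the $F_{\rho_i}$ are nonempty --- Lemma~\ref{l:stability_span} shows this set spans $X^{*}(G)_\R$; hence $\rho_1^{*}=\rho_2^{*}$ and thus $\rho_1=\rho_2$.

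For (3): the one non-formal step is an integrality statement, which is where Assumption~\ref{a:general_assumption} is genuinely used. Let $S\subseteq I$ be minimally $\theta$-stable; by Lemma~\ref{l:minimal_stability}, $|S|=r$ and $(\chi_s)_{(s,k)\in S}$ is a basis of $X^{*}(G)_\Q$, so the indices $s$ appearing in $S$ are pairwise distinct. I claim $(\chi_s)_{(s,k)\in S}$ is in fact a $\Z$-basis of $X^{*}(G)$. Otherwise the subgroup $X'\subseteq X^{*}(G)$ generated by these characters has finite index $d>1$, so $\mu:=\Hom(X^{*}(G)/X',\C^\times)$ is a nontrivial finite subgroup of $G$ on which every $\chi_s$ with $s\in\pr(S)$ restricts trivially; hence $\mu$ acts trivially on $\bigoplus_{s\in\pr(S)}V_{\chi_s}$, in particular on $v:=\sum_{(s,k)\in S}e_{s,k}$. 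But $\supp(v)=S$ is $\theta$-stable, so $v\in V^{\st}(G,\theta)$ by Lemma~\ref{l:stability_support}, contradicting the freeness of the $G$-action on the stable locus. With the claim established, define $\rho\colon\mathcal T\to G$ by $\rho^{*}(\chi_s):=c_{s,k}$ for $(s,k)\in S$; then $\delta_\rho^{*}(x_{s,k})=0$ for all $(s,k)\in S$, so $S\subseteq S_\rho$, and since $|S_\rho|\le r=|S|$ by the first paragraph, $S_\rho=S$. The main obstacle is exactly this $\Z$-integrality of the spanning characters: the stability hypotheses, through Lemmas~\ref{l:stability_span} and~\ref{l:minimal_stability}, deliver only a $\Q$-basis of $X^{*}(G)$, whereas one needs a $\Z$-basis to build $\rho$ as an honest morphism of tori, and it is precisely Assumption~\ref{a:general_assumption} that excludes the obstructing torsion.
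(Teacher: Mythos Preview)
Your argument is correct and follows essentially the same route as the paper, with a slightly cleaner packaging. The paper obtains the bound $|S_\rho|\le r$ by showing that the span $U$ of $(x_{s,k})_{(s,k)\in S_\rho}$ in $X^*(T)_\R$ meets $\ker a^*$ trivially (via a diagram involving $\pi^*$, $c^*$, $a^*$, $\rho^*$), so $\dim U=\dim a^*(U)\le r$. Your observation that $\delta_\rho=c\cdot(a\rho)^{-1}$ is a section of $\pi$, hence $\ker\delta_\rho^*$ has rank exactly $r$ and contains all $x_{s,k}$ with $(s,k)\in S_\rho$, gives the same bound more directly. For part~(3) the two arguments are again the same: the paper phrases it as showing that the homomorphism $a_S\colon G\to(\C^\times)^S$, $g\mapsto(\chi_s(g))_{(s,k)\in S}$, is an isomorphism (surjective because finite and dominant, injective by Assumption~\ref{a:general_assumption} applied to a stable point with support $S$), which is equivalent to your claim that the $\chi_s$ form a $\Z$-basis of $X^*(G)$.

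One point worth highlighting: you are right that part~(2), as stated, needs the hypothesis that $S$ is $\theta$-stable. The paper's own proof of~(2) uses exactly this---it derives a contradiction from $\pm\eta_i\in\tau_S^\vee\setminus\{0\}\subseteq\theta^+$, which is the stability condition---even though the statement does not include it. Without that hypothesis the claim is false (for instance, infinitely many $\rho$ can have $S_\rho=\emptyset$). Since the only use of the proposition is in the subsequent corollary, which restricts to $\rho$ with $V_\rho^{\st}(G,\theta)\neq\emptyset$, this does not affect the paper's conclusions, but your explicit flagging of the missing hypothesis is an improvement.
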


	\begin{proof}
\eqref{p:bijection-1}~ Let $S := S_\rho$. By Lemma~\ref{l:minimal_stability}, it is enough to show that $\left|S\right| \leq r$. Let $U$ be the $\R$-linear span of $(x_{s,k})_{(s,k) \in S}$ inside $X^*(T)_\R$. This is an $\left|S\right|$-dimensional subspace. Consider the commutative diagram
			\[\begin{tikzcd}
				& {X^*(\mathcal{T})_\R} \\
				0 & {X^*(\mathcal{T})_\R} & {X^*(T)_\R} & {X^*(G)_\R} & 0
				\arrow[from=2-1, to=2-2]
				\arrow["{\pi^*}"', from=2-2, to=2-3]
				\arrow["{a^*}"', from=2-3, to=2-4]
				\arrow[from=2-4, to=2-5]
				\arrow["\id", from=2-2, to=1-2]
				\arrow["{c^*}"', from=2-3, to=1-2]
				\arrow["{\rho^*}"', bend right=20, from=2-4, to=1-2]
			\end{tikzcd}\]
			whose bottom row is exact. We know that $\dim U = \dim a^*(U) + \dim (U \cap \ker a^*)$. Let $u \in U \cap \ker a^* = U \cap \im \pi^*$; write it as $u = \pi^*v$. Then $v = c^*\pi^*v = c^*u = \rho^*a^*u = 0$ as $u \in \ker a^*$. Therefore, $U \cap \ker a^* = 0$ and thus 
			\[
				\left|S\right| = \dim U = \dim a^*(U) \leq \dim X^*(G)_\R = r.
			\]

                        \eqref{p:bijection-2}~ 	 Let $S := S_1 = S_2$. We have $\chi_s(\rho_1(t)) = c_{s,k}(t) = \chi_s(\rho_2(t))$ for all $(s,k) \in S$. Let $\epsilon_1,\ldots,\epsilon_r$ be a basis of $X_*(G)$. It is enough to show that $\rho_1\epsilon_i = \rho_2\epsilon_i$ for all $i=1,\ldots,r$. Consider $\eta_i \in X_*(G)$ defined by
			\[
				\eta_i(z) = \rho_1(\epsilon_i(z))\rho_2(\epsilon_i(z))^{-1}.
			\]
			Then $\langle \chi_s,\eta_i \rangle = 0$ for all $(s,k) \in S$. This implies that both $\eta_i$ and $-\eta_i$ belong to $\tau_S^\vee$. If $\eta_i$ is non-trivial, it  follows that $\langle \theta,\eta_i \rangle > 0$ and $\langle \theta,-\eta_i \rangle > 0$, giving a contradiction. We have shown that $\rho_1\epsilon_i = \rho_2\epsilon_i$.

                        \eqref{p:bijection-3}~ If $S$ is minimally $\theta$-stable, then $\smash{(\chi_s)_{(s,k) \in S}}$ is a basis of $X^*(G)_\R$ and therefore also a basis of $X^*(G)_\Q$. Consider the homomorphism of abelian groups $\Z^S = X^*((\C^\times)^S) \to X^*(G)$ which maps the basis vector $x_{s,k}$ to $\chi_s$, \textit{i.e.}\ the restriction of $a^*$. It induces a homomorphism of bialgebras
			\[
				\C[\Z^S] = \C[(\C^\times)^S] \lra \C[X^*(G)] = \C[G].
			\]
			This yields a morphism of algebraic groups, which on closed points is given as
			\[
				a_S\colon  G \lra (\C^\times)^S,\quad g \longmapsto (\chi_s(g))_{(s,k) \in S}.
			\]
			As $\C[(\C^\times)^S] \to \C[G]$ is injective, $a_S$ is dominant. Moreover, because $(\chi_s)_{(s,k) \in S}$ generates $X^*(G)_\Q$, the morphism $a_S$ is finite. This implies that $a_S$ is surjective.
			By our Assumption~\ref{a:general_assumption}, $G$ acts freely on the non-empty set
			\[
				\{v \in V^{\st}(G,\theta) \mid \supp(v) = S \}.
			\]
			As a consequence, $a_S$ is also injective. In characteristic zero, a bijective morphism of algebraic groups is an isomorphism. We define
			\[
				\rho := a_S^{-1}\pr_Sc, 
			\]
			where $\pr_S\colon  T \to (\C^\times)^S$ is the projection. Then
			\[
				a_S\rho = a_Sa_S^{-1}\pr_Sc = \pr_Sc,
			\]
			which means $\chi_s(\rho(t)) = c_{s,k}(t)$ for all $(s,k) \in S$. We have shown that $S \sub S_\rho$. But $S$ is $\theta$-stable, so $S_\rho$ is as well. By~\eqref{p:bijection-1}, it is then minimally $\theta$-stable, which forces $S = S_\rho$. 
	\end{proof}

	Proposition~\ref{p:bijection} shows that the description of the fixed points using the toric fan agrees with our description. More precisely, we have the following.  
		
	\begin{cor}
		The assignment $\rho \mapsto S_\rho^c$ defines a bijection between the set
		\[
			\{ \rho\colon  \mathcal{T} \to G \mid \rho \text{ morphism of algebraic groups such that } V_\rho^{\st}(G,\theta) \neq \emptyset \}
		\]
		and the set of maximal members of\, $\Phi^{\st}(\theta)$. Moreover, for every $\rho\colon  \mathcal{T} \to G$ for which $V_\rho^{\st}(G,\theta) \neq \emptyset$, let $J := S_\rho^c$. Then the subsets $V_\rho^{\st}(G,\theta)/G$ and $O_{\sigma_J}$ of\, $V^{\st}(G,\theta)/G$ agree and consist of a single point.
	\end{cor}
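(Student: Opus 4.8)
The plan is to obtain all three assertions by combining Proposition~\ref{p:bijection} with Lemma~\ref{l:minimal_stability} and the description of $V_\rho$ recorded just before Proposition~\ref{p:bijection}. Recall from that discussion that $V_\rho = \{v \in V \mid \supp(v) \sub S_\rho\}$, that $V_\rho^{\st}(G,\theta) \neq \emptyset$ if and only if $S_\rho$ is $\theta$-stable, and that $J \in \Phi^{\st}(\theta)$ means $J^c$ is $\theta$-stable. Since passing to complements reverses inclusions, $J$ is a \emph{maximal} member of $\Phi^{\st}(\theta)$ if and only if $J^c$ is minimal among the $\theta$-stable subsets of $I$, i.e.\ minimally $\theta$-stable.

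\textbf{Well-definedness.} If $\rho$ lies in the displayed set, then $S_\rho$ is $\theta$-stable, hence minimally $\theta$-stable by Proposition~\ref{p:bijection}\eqref{p:bijection-1}; by the previous paragraph $S_\rho^c$ is a maximal member of $\Phi^{\st}(\theta)$, and by Lemma~\ref{l:minimal_stability} it has cardinality $\left|S_\rho^c\right| = m - r$. \textbf{Injectivity.} If $S_{\rho_1}^c = S_{\rho_2}^c$, then $S_{\rho_1} = S_{\rho_2}$, so $\rho_1 = \rho_2$ by Proposition~\ref{p:bijection}\eqref{p:bijection-2}. \textbf{Surjectivity.} Given a maximal $J \in \Phi^{\st}(\theta)$, its complement $J^c$ is minimally $\theta$-stable, and Proposition~\ref{p:bijection}\eqref{p:bijection-3} produces a morphism $\rho\colon \mathcal{T} \to G$ with $S_\rho = J^c$; then $S_\rho$ is $\theta$-stable, so $V_\rho^{\st}(G,\theta) \neq \emptyset$ and $\rho$ is sent to $S_\rho^c = J$.

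\textbf{The geometric identification.} As $G$ is a torus, $G_\rho = G$, and Theorem~\ref{t:stab_subgroup}\eqref{t:stab_subgroup-2} gives $V_\rho \cap V^{\st}(G,\theta) = V_\rho^{\st}(G,\theta)$; hence $F_\rho = \pi(V_\rho^{\st}(G,\theta))$, which (via the closed immersion of Theorem~\ref{t:closed_immersion}) is precisely the subset $V_\rho^{\st}(G,\theta)/G$ of $V^{\st}(G,\theta)/G$. Now a vector $v \in V_\rho$ satisfies $\supp(v) \sub S_\rho$, and by Lemma~\ref{l:stability_support} it is $\theta$-stable exactly when $\supp(v)$ is a $\theta$-stable subset; since $S_\rho$ is minimally $\theta$-stable, this forces $\supp(v) = S_\rho$. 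Therefore $V_\rho^{\st}(G,\theta) = \{v \in V \mid \supp(v) = S_\rho\}$. Setting $J := S_\rho^c$, so that $J^c = S_\rho$, the right-hand side is the set whose image in $V^{\st}(G,\theta)/G$ equals $O_{\sigma_J}$ by~\eqref{e:orbit}; thus $V_\rho^{\st}(G,\theta)/G = O_{\sigma_J}$ as subsets of $V^{\st}(G,\theta)/G$. Finally, the morphism $G \to (\C^\times)^{S_\rho}$, $g \mapsto (\chi_s(g))_{(s,k) \in S_\rho}$, is an isomorphism (shown in the proof of Proposition~\ref{p:bijection}\eqref{p:bijection-3}), and $G$ acts on $V_\rho^{\st}(G,\theta) \cong (\C^\times)^{S_\rho}$ by translation through it; the action is therefore simply transitive, so the quotient is a single point, in agreement with $\dim O_{\sigma_J} = m - r - \left|J\right| = 0$.

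\textbf{Main obstacle.} None of substance: the corollary is essentially a bookkeeping consequence of Proposition~\ref{p:bijection}. The only point requiring care is reconciling the two descriptions of the fixed locus---checking that $\{v \mid \supp(v) = S_\rho\}$ is simultaneously $V_\rho^{\st}(G,\theta)$ (by minimality of $S_\rho$) and the set defining $O_{\sigma_J}$ (by~\eqref{e:orbit})---and then recording the simply transitive $G$-action.
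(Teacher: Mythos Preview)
Your proof is correct and follows essentially the same approach as the paper: both derive the bijection from Proposition~\ref{p:bijection} and the geometric identification from the description~\eqref{e:orbit} of $O_{\sigma_J}$ together with $V_\rho^{\st}(G,\theta) = \{v \in V \mid \supp(v) = S_\rho\}$. You simply spell out more detail---separating well-definedness, injectivity, and surjectivity, and supplying an explicit argument (via the isomorphism $a_{S_\rho}$ from the proof of Proposition~\ref{p:bijection}\eqref{p:bijection-3}) for why the quotient is a single point---whereas the paper leaves these as immediate consequences.
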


	\begin{proof}
		The bijectivity of $\rho \mapsto S_\rho^c$ was already established in Proposition~\ref{p:bijection}. The second statement follows from the description (\ref{e:orbit}) of $O_{\sigma_J}$ and from
	\begin{equation*}\pushQED{\qed}	
			V_\rho^{\st}(G,\theta) = \{ v \in V \mid \supp(v) = S_\rho \}.
\qedhere \popQED
	\end{equation*}
\renewcommand{\qed}{}     
	\end{proof}    
	
	We finish this section, and the paper, with an example.

	\begin{ex}
		Fix a number $d \in \Z_{\geq 0}$. Consider the vector space $V = \C^4$ with the action of $G = (\C^\times)^2$ given by
		\[
			(g,h)\cdot (x_0,x_1,y,z) = (gx_0,gx_1,hy,g^dhz).
		\]
		This means, as a $G$-representation, $V = \C(1,0)^2 \oplus \C(0,1) \oplus \C(d,1)$. Let $\theta\colon  G \to \C^\times$ be the character given by $\theta(g,h) = g^{d+1}h$. Using the Hilbert--Mumford criterion, we can show that
		\[
			V^{\sst}(G,\theta) = V^{\st}(G,\theta) = \{ v = (x_0,x_1,y,z) \mid (x_0,x_1) \neq (0,0) \neq (y,z) \}.
		\]
		We see that the quotient $V^{\st}(G,\theta)/G$ is isomorphic to $\P(\OO_{\P^1} \oplus \OO_{\P^1}(d))$, the $\supth{d}$ Hirzebruch surface.
		
		Observe that $G$ acts freely on the stable locus. So Assumption~\ref{a:general_assumption} is fulfilled. We get $T = (\C^\times)^4$, and the morphism $a\colon  G \to T$ is given by
		\[
			a(g,h) = (g,g,h,g^dh).
		\]
		The morphism $\pi\colon  T \to \mathcal{T} = (\C^\times)^2$ defined by $\pi(z_1,\ldots,z_4) = (z_1z_2^{-1},z_3z_2^dz_4^{-1})$ is a cokernel of $a$. Let $I = \{1,\ldots,4\}$ and $\epsilon_1,\ldots,\epsilon_4$ be the basis of $X_*(T)$ consisting of the embeddings in the $\supth{i}$ diagonal entry. We read off
		\[
			\Phi = \{ J \sub \{1,\ldots,4\} \mid \{1,2\} \nsubseteq J \nsupseteq \{3,4\} \},
		\]
		and  by Proposition~\ref{p:toric_fan_quotient}, the toric fan is $\Delta = \{ \cone(\pi^*\epsilon_i)_{i \in J} \mid J \in \Phi \}$. The maximal elements of $\Phi$ are $\{1,3\}$, $\{2,3\}$, $\{1,4\}$, and $\{2,4\}$. They correspond to the fixed points 
		\[
			\pi(0,1,0,1),\quad \pi(1,0,0,1),\quad \pi(0,1,1,0),\quad \pi(1,0,1,0).
		\]
		A section of $\pi$ is given by $c\colon  \mathcal{T} \to T$, which sends $(t_1,t_2)$ to $(t_1,1,t_2,1)$. For $\rho\colon  \mathcal{T} \to G$, we obtain
		\[
			V_\rho = \{ v \in V \mid c(t)v = a\rho(t)v \text{ (all $t \in \mathcal{T}$)} \}.
		\]
		Let $\rho(t) = (t_1^{m_1}t_2^{m_2},t_1^{n_1}t_2^{n_2})$. Then for $v = (v_1,\ldots,v_4)$, the condition $c(t)v = a\rho(t)v$ is equivalent to the four equations
		\begin{align*}
			t_1v_1 &= t_1^{m_1}t_2^{m_2}v_1, \\
			v_2 &= t_1^{m_1}t_2^{m_2}v_2, \\
			t_2v_3 &= t_1^{n_1}t_2^{n_2}v_3, \\
			v_4 &= t_1^{dm_1+n_1}t_2^{dm_2+n_2}v_4.
		\end{align*}
		There are four possibilities for $\rho$ which admit a non-empty stable locus $V_\rho^{\st}(G,\theta)$. They are $\rho_1(t) = (t_1,t_2)$, $\rho_2(t) = (1,t_2)$, $\rho_3(t) = (t_1,\smash{t_1^{-d}})$, and $\rho_4(t) = (1,1)$. They give the fixed points
		\[
			\pi(1,0,1,0),\quad \pi(0,1,1,0),\quad \pi(1,0,0,1),\quad \pi(0,1,0,1).
		\]
	\end{ex}


\end{document}